\let\savedDelta\Delta
\let\Delta\savedDelta
\title{Ecalle's averages, Rota-Baxter algebras and the construction of moulds}
\author{Emmanuel Vieillard-Baron\\
}
\newcommand{\R}{\mathbb{R}}
\newcommand{\p}[1]{\left(#1\right)}
\newcommand{\cro}[1]{\left[#1\right]}
\newcommand{\ens}[1]{\left\{#1\right\}}
\newcommand{\bas}[1]{\check{#1}}
\newcommand{\F}{\mathbb{F}}
\newcommand{\N}{\mathbb{N}}
\renewcommand{\R}{\mathbb{R}}
\newcommand{\RPs}{\R_+^*}
\newcommand{\C}{\mathbb{C}}
\newcommand{\intere}[2]{\left\llbracket #1,#2\right\rrbracket}
\newcommand{\integ}[4]{\int_{#1}^{#2} #3 \text{d} #4}
\newcommand{\dd}[1]{\mathrm{ \;d}#1}
\def\point{\bullet}
\newcommand{\ENDOM}{\text{\textbf{ENDOM}}\p{\C\cro{\cro{u}}}}
\newcommand{\Lin}{\text{Lin}}
\newcommand{\mots}{{\Omega}^\point}
\newcommand{\m}{\underline m}
\newcommand{\n}{\underline n}
\newcommand{\w}{{\underline{\omega}}}
\newcommand{\sh}{\text{\textbf{sh}}}
\newcommand{\pscal}[1]{\left< #1\right>}
\newcommand{\motsarbo}{{\Omega^{\point<}}}
\newcommand{\K}{\text{K}}
\newcommand{\id}{\text id}
\newcommand{\eps}{\epsilon}
\newcommand{\rev}{\text{\textbf{rev}}}
\newcommand{\im}{\text{Im }}
\def\norme#1{\Vert#1\Vert}
\def\abs#1{\left|#1\right|}
\def\l#1{\text{l}\p{#1}}
\newcommand{\dspappli}[5]{
#1:  \left\{
    \begin{array}{ccc}
      #2 & \longrightarrow & #3 \\
      #4 & \mapsto & #5
    \end{array}
  \right.
  }
\newlength{\textlarg}
\def\QSym{{\it QSym}}          
\def\WQSym{{\bf WQSym}}        
\def\C{{\mathbb C}}
\def\binomial#1#2{\left(\,\begin{matrix}#1 \cr #2\end{matrix}\,\right)}
\def\<{\langle}
\def\>{\rangle}
\def\F{{\bf F}}         
\def\N{{\bf N}}
 \newcommand{\ancien}[1]{}
 \newcommand{\ancienm}[1]{}
 \newcommand{\refere}[1]{}
\newcommand{\pointa}{\bullet}
\newcommand{\arbrea}[1]{\scalebox{0.5}{\begin{tikzpicture}
\tikzset{grow'=up,level distance=30pt}
\tikzset{execute at begin node=\strut}
\Tree [.$\pointa #1$ ]
\end{tikzpicture}}}
\newcommand{\arbreabA}[2]{\scalebox{0.5}{\begin{tikzpicture}
\tikzset{grow'=up,level distance=30pt}
\tikzset{grow'=up}
\tikzset{execute at begin node=\strut}
\Tree [.$\pointa #1$ [.$#2$ ] ]
\end{tikzpicture}}}
\newcommand{\arbreabB}[2]{\scalebox{0.5}{\begin{tikzpicture}
\tikzset{grow'=up,level distance=30pt}
\tikzset{execute at begin node=\strut}
\Tree [ .$\pointa #1$   ]
\begin{scope}[xshift=+7mm]
\Tree   [ .$\pointa #2$   ]
\end{scope}
\end{tikzpicture}}}
\newcommand{\arbreabcA}[3]{\scalebox{0.5}{\begin{tikzpicture}
\tikzset{grow'=up,level distance=30pt}
\tikzset{execute at begin node=\strut}

\Tree [.$\pointa #1$
 [.$#2$  ] [.$#3$ ]  ]
\end{tikzpicture}}}
\newcommand{\arbreabcB}[3]{\scalebox{0.5}{\begin{tikzpicture}
\tikzset{grow'=up,level distance=30pt}
\tikzset{execute at begin node=\strut}
\Tree [.$\pointa #1$ [.$#2$  ]  ]
\begin{scope}[xshift=+7mm]
\Tree [.$\pointa #3$ ]  
\end{scope}
\end{tikzpicture}}}
\newcommand{\arbreabcdA}[4]{\scalebox{0.5}{\begin{tikzpicture}
\tikzset{grow'=up,level distance=30pt}
\tikzset{execute at begin node=\strut}
\Tree [.$\pointa #1$ [.$#2$  ] [.$#3$  ]  [.$#4$  ]  ]
\end{tikzpicture}}}
\newcommand{\arbreabcdB}[4]{\scalebox{0.5}{\begin{tikzpicture}
\tikzset{grow'=up,level distance=30pt}
\tikzset{execute at begin node=\strut}
\Tree [.$\pointa #1$ [.$#2$ $#3$  ]  [.$#4$  ]  ]
\end{tikzpicture}}}
\newtheorem{theorem}{Theorem}[section]
\newtheorem{lemma}[theorem]{Lemma}
\newtheorem{proposition}[theorem]{Proposition}
\renewenvironment{proof}[1][Proof]{\begin{trivlist}
\item[\hskip \labelsep {\bfseries #1}]}{$\boxempty$\end{trivlist}}
\newtheorem{notation}{Notation}[subsection]
\newtheorem{example}{Example}[subsection]
\newtheorem{definition}{Definition}[subsection]
\theoremstyle{plain}
\theoremstyle{remark}
\newtheorem{remark}{Remark}[subsection]
\renewcommand{\Lin}[1]{\text{G}_{#1}}
\newcommand{\adds}{+}
\renewcommand{\id}{\text{id}}
\renewcommand{\ENDOM}{\text{\textbf{ENDOM}}}
\def\sh{
\setlength{\unitlength}{.5 pt}
\begin{picture}(40,20)
\put(10,2){\line(1,0){20}}
\put(10,2){\line(0,1){10}}
\put(20,2){\line(0,1){10}}
\put(30,2){\line(0,1){10}}
\end{picture}}
\newcommand{\Hcsh}{\text{\textbf{Hqsh}}}
\newcommand{\Hsh}{\text{\textbf{Hsh}}}
\newcommand{\HCK}{\text{\textbf{Hck}}}
\newcommand{\phidiff}{\phi_{\text{diffusion}}}
\newcommand{\phiorga}{\phi_{\text{organic}}}
\renewcommand{\gg}{>}
\renewcommand{\N}{{{\mathbb{N}}}}
\renewcommand{\R}{{{\mathbb{R}}}}
\renewcommand{\RPs}{{{\R_+^*}}}
\renewcommand{\C}{{{\mathbb{C}}}}
\renewcommand{\K}{{{\mathbb{K}}}}
\def\qshu{\joinrel{\!\scriptstyle\amalg\hskip -3.1pt\amalg}\,\hskip -8pt\hbox{-}\hskip 5pt}
\newif\ifshow
\newif\ifdiff
\begin{document}
\begin{otherlanguage}{english}

\maketitle

\begin{small}
 \begin{center}
Université de Bourgogne \\
Institut de Mathématiques de Bourgogne\\ 
9, avenue Alain Savary\\
B.P. 47 870, 21078 Dijon, France\\
email:\,\textit{emmanuel.vieillard-baron@math.cnrs.fr}
 \end{center}

\end{small}

\textbf{Classification~:} 05C05, 0E99, 16T30, 30B40.

\textbf{Keywords~:} quasi-shuffle algebra, Rota-Baxter algebra, character, Atkinson recursion, tree, mould, arborification, Hopf algebra.

\abstract{Rota-Baxter algebras and Atkinson's method are powerful tools for the factorization of characters on Hopf algebras. The theory of real resummation discovered by J. Ecalle and known as \textit{well-behaved averages theory} can be reformulated in terms of character factorization. The aim of this article is to explain how Atkinson recursion provides an alternative way to retrieve characters already discovered by Ecalle.}

\tableofcontents

\end{otherlanguage}

\section{Introduction}

Since the pioneering work of Baxter in fluctuation theory (\cite{Ba}) in the sixties, followed by those of Cartier (\cite{Ca}) and Rota (\cite{Ro}, \cite{RoSi}) (ten years after) and finally by the one of Atkinson \cite{At}, Rota-Baxter algebras have become  a powerful frame for the factorization of characters  on Hopf algebras.  There are many applications; for example, the treatment by Connes and Kreimer of the renormalization problem in pQFT can be understood through this mechanism (\cite{CoKr}).

In the middle of the eighties and in a totally distinct domain, J. Ecalle found an original approach of local analytical dynamic based on a calculus making use of new objects, \textit{moulds} (\cite{Ec1}, \cite{Ec2}, \cite{Sa1}, \cite{Sa2}, \cite{Sa3}, \cite{Sa4},...). It appeared in the last decade that Ecalle's formalism may be naturally translated into the language of Hopf algebra. Thus, some particular moulds called \textit{symmetrel} may  be identified with characters on the quasi-shuffle algebra (\cite{Ho2}). This was the starting point of a fruitful series of exchanges between the world of algebraic combinatorics and the one of local dynamic (\cite{Ch}, \cite{ChHiNoTh}, \cite{Cr}, \cite{FaFoMa}, \cite{MeNoTh}, \cite{MeNoTh1}, \cite{Sc}, \cite{NoPaSaTh} ....).

The \textit{resummation problem} consists in finding, for a given divergent power series  $\tilde \phi\in\C[[x]]$, a function $\phi$ defined and analytic on a sectoral neighbourhood of $0$ in the complex plane that is asymptotic to $\tilde \phi$. Since the $19$th century, numerous works have been devoted to this problem. The seminal work by Malgrange, Ramis and Sibuya has led to the multisummability theory and independently the one of  Ecalle to the  \textit{accelero-summability theory} (\cite{MalRa}, \cite{MarRa}, \cite{RaSi}).

The particular case of \textit{real resummation} consists in summing a real and divergent power series into an analytic function defined on a sectoral neighbourhood of $0$ bisected by one of the two real directions which takes  real values along real direction. It is a very delicate problem and its solution by Ecalle, known as \textit{well-behaved averages theory} (\cite{Ec2}, \cite{Me2}, \cite{Me4}, \cite{Vi1}) is equivalent to finding some  character of the quasi-shuffle algebra $\Hcsh$  satisfying some very restrictive conditions. Ecalle has discovered several families of solutions of this problem. Thus he has introduced for this purpose some \textit{averages} that he called \textit{diffusion induced} and \textit{organic} (\cite{Ec1}). This was a spectacular solution to a very hard problem. But  their origin remains mysterious and the proofs given by Ecalle were written in a very concise style. 

The problem for constructing averages can be split into two parts~: an algebraic and an analytic one.  Let us explain its algebraic component.

Before doing it, we have to introduce the map $\iota:\Hcsh\to\R$ and defined  for any word $\w\in\Hcsh$ by $$\iota(\w)=\begin{cases} 1&\text{ if } \l{\w}\leq 1 \\ 0&\text{ otherwise } \end{cases} $$ where $\l{\w}$ is the length of the word $\w$. It is a character of $\Hcsh$.

We denote by $*$ the convolution product between characters of $\Hcsh$ and we consider the algebra morphism  
$\rev:\Hcsh\to\Hcsh$  defined for any word  $\w=(\omega_1,\hdots,\omega_n)\in\Hcsh$ by $\rev\p{\w}=\p{\omega_n,\hdots,\omega_1}$ and extended on the whole of $\Hcsh$ by linearity.

The \textit{algebraic problem of averages} consists in finding a character $\phi:\Hcsh\to\C$ obeying  $$\phi^{*-1}*\psi=\iota \text{ where } \psi^{*-1}=\overline{\phi}\circ \rev.$$

Solving it is then equivalent to factorizing the character $\iota$ into two  characters $\phi$ and $\psi$.  What is particular here is that these characters  are linked by the relation $\psi^{*-1}=\overline{\phi}\circ \rev$ and so the \textbf{factorization is constrained}.

The aim of this paper is to show that Atkinson's method on specified Rota-Baxter algebras provides a very natural frame to obtain such a factorization. But as we will explain, the hard constraints in the factorization will be satisfied only for some appropriate choices of the working Rota-Baxter algebra. We will exhibit three Rota-Baxter algebras solving this problem and allowing to obtain the characters already discovered by Ecalle.


Beyond providing an effective way  to compute  characters satisfying the algebraic problem of averages, the Rota-Baxter formalism allows to easily solve its \textit{analytic} component. 
\begin{WithoutArbo}A next paper will be dedicated to this subject and it consists to compute the \textit{contracted arborificationtion} of the character $\phi$. This problem can easily be translated into a factorization one no more on the quasi-shuffle algebra but on the Connes-Kreimer algebra. 
\end{WithoutArbo}
\begin{WithArbo}
It amounts to calculating the \textit{contracted arborification} of characters satisfying the algebraic problem of averages and to obtain some good bounds for it. \textit{Arborification theory} is a very important element of  Ecalle's work (\cite{Ec1},\cite{EcVa}). The contracted arborified of a given character on the quasi-shuffle algebra  is obtained when composing it  by an algebra morphism $\alpha:\HCK\to\Hcsh$ defined from the Connes-Kreimer Hopf algebra $\HCK$ into the quasi-shuffle one $\Hcsh$. This morphism, known as \textit{arborification morphism}, is now a well known object but its computation requires some difficult combinatorial considerations (\cite{FaMe}, \cite{Vi1}, \cite{Vi2}). When applying Atkinson's factorization no more on the character $\iota$ defined on the the quasi-shuffle algebra but directly on its contracted arborification $\iota^<=\iota\circ \alpha$ defined on the Connes-Kreimer algebra, recursive procedure allows then to compute the contracted arborification of diffusion induced and organic characters without knowing the explicit form of the arborification morphism.

The \textit{analytic problem of averages} consists then in proving the geometrical growth of the sequence $(\phi^{<}(\w^<))_{\w^<\in\HCK}$, i.e. in proving the existence of $a,b\in\RPs$ satisfying for any forest $\w^<\in\HCK$~:
$$\abs{\phi^{<}\p{\w^<}}\leq ab^{\norme{\w^<}} $$ where  $\phi^<=\phi\circ \alpha$ is a character solving the averages algebraic problem and  where $\norme{\w^<}$ represents the sum of all the decorations of the nodes of the forest $\w^<$. We explain that it can be reformulated in terms of the \og growth \fg~ of the used Rota-Baxter operator.   
\end{WithArbo}
      

The paper is organized as follows. We begin by some basic reminders about Rota-Baxter algebras, Atkinson's factorization and the quasi-shuffle Hopf algebra. We then establish our main Theorem \ref{theo_wba} which gives a criterium allowing to detect Rota-Baxter algebras on which the above mentioned constrained factorization is possible, i.e Rota-Baxter algebras solving algebraic and analytic averages problem. 

\begin{WithArbo}
After some basic facts about Connes-Kreimer Hopf algebra and the arborification morphism, we establish Theorem  \ref{theo_algo_arborification} delivering an algorithm for the computation of the contracted arborification. We then establish Theorem  \ref{theo_wbaa} which allows to recognize Rota-Baxter algebras solving the analytic problem of averages. 
\end{WithArbo}

%
We then apply our main theorem to rediscover Ecalle's characters defining diffusion induced and organic averages. We propose two Rota-Baxter algebras on which it is possible to obtain the character associated to organic average. 
\begin{WithArbo}
We prove that these Rota-Baxter algebras solve the analytic problem of averages too and  we obtain closed formula for the contracted arborification of these two characters using the algorithm explained in Theorem \ref{theo_algo_arborification}. 
\end{WithArbo}

Discovering new well behaved averages consists now in finding new Rota-Baxter algebras satisfying the hypotheses of Definition \ref{def_wbRBa} and Definition \ref{def_wbaRB}. It seems that except the one of section \ref{section_diffusion_induced_averages} (which is example 1.1.16 of \cite{Gu}), no Rota-Baxter algebra among the most classical ones (as the other Rota-Baxter algebras illustrating \cite{Gu}) is in position to  verify these hypotheses.

Let us finally mention an article of  Menous-Novelli-Thibon \cite{MeNoTh} dedicated to several constructions of characters on combinatorial Hopf algebras based on ideas related to well-behaved theory and more precisely to properties of diffusion induced averages family. In particular, the authors propose  an expression of the character $\phi$ associated with the diffusion induced average in terms of iterated Rota-Baxter operators. The presentation of the combinatoric of averages in terms of a character factorization problem and its solution using Atkinson methods, however, is specific to our work. Furthermore, we will explain in a next article that it is possible to obtain the main relation of \cite{MeNoTh} using Atkinson recursion on the Hopf algebra of  \textit{packed words} $\WQSym$ which is a non-commutative lift of $\QSym$, the Hopf algebra of word quasi-symmetric functions.  This approach leads to recover the first representation of averages given by Ecalle as a system of complex scalar weights indexed by words on a given alphabet and satisfying some so called \textit{autocoherence relations} for which  we give an algebraic interpretation. 

A preliminary version of this work was exposed in a workshop at the Scuola Normale Superiore Pisa 
in the beginning of april 2013.  We thank the members  of the project ANR CARMA 12-BS01-0017 which has permitted the organization  of this workshop. Finally, we thank Frédéric Fauvet for his constant support and his advices and Dominique Manchon for his final careful reading of the paper. 



\section{Rota-Baxter algebras and Atkinson recursion}\label{subsection_reminders_quasi_shuffles_algebra}

We recall now some classical definitions and results about Rota-Baxter algebras and the Atkinson recursion. The reader can consult \cite{EbMaPa} or \cite{Gu} for proofs and details.

\begin{definition}
Let us consider an associative algebra $A$ and an endomorphism $R\in\ENDOM \p{A}$. The pair $\p{A,R}$ is said to be a \textbf{Rota-Baxter algebra of weight $\theta\in\K$} if $R$ satisfies the Rota-Baxter relation~:
\begin{equation}R(x)R(y)=R\p{R(x)y+xR(y)+\theta xy} .\label{Rota_Baxter_relation}\end{equation} 
\end{definition}

\begin{notation}We set $\tilde R=-\theta \id_A-R$.
\end{notation}


\begin{theorem}[Atkinson recursion]
 Let $\p{B,R}$ be an associative unital Rota-Baxter algebra. Consider $a\in B$ and set
$$F=\sum_{n\in\N} t^n \p{Ra}^{\cro{n}} \text{ and } G=\sum_{n\in\N} t^n \p{\tilde R a}^{\ens{n}}  $$ with inductively defined~: 
\begin{itemize}
                                                                                                                                \item $\p{Ra}^{\cro{0}}:=1_B$,  $\p{Ra}^{\cro{1}}:=R(a)$ and $\p{Ra}^{\cro{n+1}}:=R\p{\p{Ra}^{\cro{n}}a}$.
 \item $\p{Ra}^{\ens{0}}:=1_B$,  $\p{Ra}^{\ens{1}}:=R(a)$ and $\p{Ra}^{\ens{n+1}}:=R\p{a\p{Ra}^{\ens{n}}}$.
                                                                                                                               \end{itemize}
Then $F$ and $G$ solve the recursions 
\begin{equation}F=1_B+t R(F a) \label{equ_gamma_minus}\end{equation}
\begin{equation}G=1_B+t \tilde R(aG) \label{equ_gamma_plus} \end{equation}

in $B[[t]]$ and we have the factorization~:
$$ F\p{1_B+at\theta}G=1_A .$$
This factorization is unique for an idempotent Rota-Baxter map $R$. 
\end{theorem}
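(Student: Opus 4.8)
The plan is to treat the three assertions of the statement in turn---the two fixed-point recursions, the factorization, and the uniqueness---each building on the previous.

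First I would recover the recursions \eqref{equ_gamma_minus} and \eqref{equ_gamma_plus} straight from the definitions. Splitting off the $n=0$ term in $F=\sum_{n}t^n(Ra)^{\cro{n}}$ and invoking $(Ra)^{\cro{n+1}}=R\p{(Ra)^{\cro{n}}a}$, the tail over $n\ge 1$ reindexes to $tR\p{Fa}$, giving $F=1_B+tR(Fa)$; the computation for $G$ is the mirror image, using $(\tilde Ra)^{\ens{n+1}}=\tilde R\p{a(\tilde Ra)^{\ens{n}}}$ and linearity of $\tilde R$. Throughout I would record the single structural fact that $R+\tilde R=-\theta\,\id$, immediate from the notation $\tilde R=-\theta\,\id-R$.

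For the factorization I would work entirely from these two recursions. Writing $u=R(Fa)$ and $v=\tilde R(aG)$, so that $F=1_B+tu$ and $G=1_B+tv$, a direct expansion gives
\[
F\p{1_B+\theta t a}G=1_B+t\p{u+v+\theta a}+t^2\p{uv+\theta ua+\theta av}+\theta t^3\,uav .
\]
The point is that everything past the constant term must cancel. Re-injecting the recursions as $u=R(a)+tR(ua)$ and $v=\tilde R(a)+t\tilde R(av)$ and using $R+\tilde R=-\theta\,\id$ collapses the linear term, $u+v+\theta a=t\p{R(ua)+\tilde R(av)}$, which frees an extra factor $t$; the identity to prove then becomes $R(ua)+\tilde R(av)+uv+\theta ua+\theta av+\theta t\,uav=0$. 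To handle the cross term I would expand $uv=R(Fa)\tilde R(aG)$ via the Rota-Baxter relation \eqref{Rota_Baxter_relation}: substituting $\tilde R=-\theta\,\id-R$, applying \eqref{Rota_Baxter_relation} to $R(Fa)R(aG)$, and resolving the inner products through $Fa=a+tua$ and $aG=a+tav$, all of the $R(uav)$-type contributions cancel and one is left with $uv=-\theta ua-\theta t\,uav-R(ua)+R(av)$. Substituting this back, every term annihilates in pairs and the residue is $R(av)+\tilde R(av)+\theta av$, which vanishes once more by $R+\tilde R=-\theta\,\id$. I expect this to be the main obstacle, though only an organizational one: the several $R(uav)$ and $\theta uav$ terms have to be lined up so that the cancellation is manifest rather than drowned in the expansion.

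Finally, for uniqueness I would use idempotency decisively. When $R^2=R$ the complementary map $\tilde R$ is idempotent as well, and $B=\im R\oplus\im\tilde R$ with each summand a subalgebra, the closure of $\im R$ being a consequence of \eqref{Rota_Baxter_relation}. The recursions force, coefficient by coefficient in $t$, that $F-1_B$ is valued in $\im R$ and $G-1_B$ in $\im\tilde R$; moreover $1_B+t(\im R)[[t]]$ and $1_B+t(\im\tilde R)[[t]]$ are subgroups of the units of $B[[t]]$ meeting only in $1_B$. Given any second factorization subject to the same support constraints, comparing it against $F,G$ and separating the two complementary components then forces the factors to coincide. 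The essential role of idempotency is precisely to furnish the direct-sum decomposition on which this separation rests; without it $\im R$ and $\im\tilde R$ need not be complementary and uniqueness may fail.
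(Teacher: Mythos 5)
The paper does not actually prove this statement: it is recalled as a classical result, with the reader referred to \cite{EbMaPa} and \cite{Gu}, so there is no internal proof to compare yours against. Judged on its own terms, your argument is the standard one and its core is correct. The two recursions follow from the definitions exactly as you describe, and your factorization computation checks out: with $u=R(Fa)$ and $v=\tilde R(aG)$ one has $u+v+\theta a=t\p{R(ua)+\tilde R(av)}$, and applying (\ref{Rota_Baxter_relation}) to $R(Fa)R(aG)$ together with $Fa=a+tua$, $aG=a+tav$ and $R(aG)=-\theta\,aG-v$ gives $R(Fa)\,aG+Fa\,R(aG)+\theta\,Fa\,aG=u\,aG-Fa\,v=ua-av$, whence $uv=-\theta ua-\theta t\,uav-R(ua)+R(av)$ and the surviving terms reduce to $\p{R+\tilde R+\theta\,\id}(av)=0$. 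This is precisely the cancellation you predicted.

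The one point to flag is in the uniqueness step. Your assertion that $R^2=R$ forces $\tilde R=-\theta\,\id-R$ to be idempotent with $B=\im R\oplus\im\tilde R$ holds only for weight $\theta=-1$: in general $\tilde R^2-\tilde R=(\theta+1)\p{\theta\,\id+2R}$, and for instance $\theta=0$ gives $\tilde R=-R$, so the two images are not complementary. The hypothesis that makes the argument work in arbitrary weight is $R^2=-\theta R$, under which $-\theta^{-1}R$ and $-\theta^{-1}\tilde R$ are complementary projections and your separation argument goes through verbatim (the subgroups $1_B+tR(B)[[t]]$ and $1_B+t\tilde R(B)[[t]]$ intersect in $\{1_B\}$, and a second factorization yields $F'F^{-1}=G'^{-1}G$ lying in both). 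Since every Rota--Baxter algebra used later in the paper has weight $-1$ with $R$ a genuine projection, this is a caveat about the generality of your justification rather than about the cases that matter here; you should also state explicitly that uniqueness is asserted among pairs with $F'-1_B$ valued in $R(B)$ and $G'-1_B$ valued in $\tilde R(B)$ coefficientwise, which is exactly what the two recursions guarantee.
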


%

We consider now a graded connected commutative Hopf algebra $\p{H,\mu,\eta_H,\delta,\eps}$ and $\p{A,R}$ an associative unital Rota-Baxter algebra of weight $\theta$.

We denote by $\Lin{H,A}$ the set of algebra morphisms from $H$ to $A$. We turn this space into an algebra by considering the convolution product $\star$ defined for any $f,g\in\Lin{H,A}$ by, using Sweedler's notation, 
$$f\star g ( h) = f\p{h^{\p{1}}}.g\p{h^{\p{2}}}$$
for any $h\in H$. The unit is given by the map $e=\eta_A\circ \eps$.

Considering the map $\mathcal R:\Lin{H,A} \to \Lin{H,A}$ given for any $f\in\Lin{H,A}$ by $\mathcal R (f)=R\circ f$, we turn $\p{\Lin{H,A},\mathcal R}$ into a Rota-Baxter algebra of weight $\theta$ as well.

\begin{remark}
Let us observe that for any algebra morphism $\Theta:A\to \C$,  $\Theta\circ f$ is a character on the Hopf algebra $H$ for any $f\in \Lin{H,A}$. 
\end{remark}
 
%


\section{The quasi-shuffle Hopf algebra}\label{section_quasi_shuffles_algebra}

\subsection{Some reminders about the quasi-shuffle Hopf algebra}\label{subsection_remeinders_quasi_shuffles_algebra}

We denote by $\mots$ the set of words with letters in  a set $\Omega$. For the countable semi-group $\p{\Omega,\adds}$ with $\Omega=\ens{\omega_1,\omega_2,\hdots}$ and $\omega_i+\omega_j=\omega_{i+j}$, we consider  the linear span $\pscal{\mots}$ of $\mots$ and we inductively define on it the \textit{quasi-shuffle product} by, for any $a\m\footnote{where $a\m$ denotes the concatenation of the letter $a$ with the word $\m$},b\n\in\mots$~:

$$a\m\qshu b\n= a \p{\m\qshu b\n}+b\p{a\m\qshu \n}+\p{a\adds b} \m\qshu \n.$$

For example, one has~:
\begin{eqnarray*}\p{a,b}\sh\p{\alpha,\beta}=\p{\alpha,a,b,\beta}+\p{\alpha,a,\beta,
b} + \p{\alpha,\beta,a,b}+\p{a,\alpha,b,\beta}+\p{a,\alpha,\beta,b}+\p{a,b,\alpha,\beta}+\\
\p{\alpha\adds a,b,\beta}+ \p{\alpha,a,b\adds\beta}+ \p{\alpha\adds a,\beta,b}+
\p{\alpha+a,\beta+b}+ 
\p{\alpha,a,\beta+b}+ \p{\alpha,\beta\adds a,b}+\\
\p{a+\alpha,b,\beta}+\p{a\adds\alpha,b\adds\beta}+ \p{a,b\adds\alpha,\beta}
.\end{eqnarray*}

We have a natural grading on $\pscal{\mots}$ defined by $\omega_i\mapsto i$ and a natural coproduct given by deconcatination~: 
\begin{equation} \Delta\p{\w}=\sum_{\w^1.\w^2=\w}\w^1\otimes \w^2.\end{equation}

The unit is the application $\eta_H:1_\K\to \emptyset$ and the counit is $\eps:\w\to \begin{cases} 1&\text{ if } \w=\emptyset\\ 0 &\text{otherwise}\end{cases}$. 

We then consider the graded completed bialgebra $\Hcsh$ of $\pscal{\mots}$. It is a Hopf algebra called Hopf algebra of quasi-shuffle. We invite the reader to consult \cite{Ho2} for more details about the quasi-shuffle Hopf algebra construction. 


\subsection{Application of Atkinson recursion on the quasi-shuffle Hopf algebra}\label{subsection_quasi_shuffle_Hopf_algebra}

\begin{remark}\label{remark_rigidity_qs}
 Let us consider an associative, commutative and unital Rota-Baxter algebra $(A,R)$ and  $\phi:\Hcsh\to A$ an algebra morphism which vanishes on words of length strictly more than one. Then, for any $\omega_1,\omega_2\in\Omega$, because of the definition of the quasi-shuffle product, we must have
$$\phi\p{\omega_1\sh\omega_2}=\phi\p{\omega_1}\phi\p{\omega_2}=\phi\p{\omega_1+\omega_2}.$$ and then, if $\Omega=\N^*$, it comes, for any $\omega\in\Omega$~:
$$\phi\p{\omega} =\p{\phi\p{1}}^\omega.$$ 
\end{remark}

In relation with this remark, for $\gamma \in A $, we introduce the algebra morphism $\mathcal I\in\Lin{\Hcsh,A}$ defined by 
\begin{equation}
 \mathcal I\p{\w}= \begin{cases}
                    1_A & \text{ if } \w=\emptyset\\ 
                   \gamma^{{\omega}} &\text{ if } \l{\w}=1 \quad (\w=\omega)\\
                   0&\text{ otherwise }
                  \end{cases}
\end{equation} and we consider an algebra morphism $\Theta:A \to \C$ in a such way that $\Theta(\gamma)=1$. Then one has $\Theta\circ \mathcal I=\iota$.

By applying Atkinson's recursion (see \cite{EbMaPa}) in the Rota-Baxter algebra $\p{\Lin{\Hcsh,A}[[t]],\mathcal R}$, there exists a unique pair $\p{F,G}\in\p{\Lin{\Hcsh,A}[[t]]}^2$ such that, with\footnote{It is important to notice for what follows that $a\p{\w}=0$ for sequences $\w$ of length equal or more than $2$ (and for sequence of length $0$, i.e. for the empty sequence). } $a=-\p{\mathcal I-e}$, 
\begin{equation}
 e+at=F^{-1}G^{-1}
\end{equation}

and so for $t=-1$, one has~: 
\begin{equation}
 \mathcal I=F^{-1}_{|t=-1} G^{-1}_{|t=-1}.
\end{equation}

As $\Theta$ is an algebra morphism, $\Theta\circ F_{|t=-1}$ and $\Theta\circ G^{-1}_{|t=-1}$ define characters on $\Hcsh$ respectively denoted $\phi$ and $\psi$ and the previous relation becomes
 \begin{equation}
\iota=\phi^{*-1} * \psi.
\end{equation}

But using the properties of $\mathcal I$ and these of the quasi-shuffle Hopf algebra,  we can be more precise.

\begin{lemma}\label{lemma_1}
 One has ~: $$\phi(\point):=\Theta\circ F_{|t=-1}(\point)=\p{-1}^{\l{\point}}\p{\mathcal R a}^{\cro{\l{\point}}}(\point)$$
           $$\psi^{*-1}\circ\rev(\point):=\p{\Theta\circ G_{|t=-1}}\circ\rev(\point)=\p{-1}^{\l{\point}}\p{\tilde{\mathcal R }a}^{\cro{\l{\point}}}(\point)$$
\end{lemma}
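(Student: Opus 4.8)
The plan is to unwind both sides through the explicit Atkinson series and to exploit two structural facts: that $a$ is supported on single letters (the footnote), and that the target algebra $A$ is commutative. First I would record the reduction. Atkinson's recursion applied to $\p{\Lin{\Hcsh,A}[[t]],\mathcal R}$ gives $F=\sum_{n} t^n\p{\mathcal R a}^{\cro{n}}$ and $G=\sum_{n} t^n\p{\tilde{\mathcal R} a}^{\ens{n}}$. Since $\Theta$ is an algebra morphism it satisfies $\Theta\circ\p{f\star g}=\p{\Theta\circ f}*\p{\Theta\circ g}$, hence it carries the $\star$-inverse to the $*$-inverse; in particular $\psi^{*-1}=\Theta\circ G_{|t=-1}$. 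Thus it suffices to evaluate $F_{|t=-1}$ and $G_{|t=-1}$ on a word $\w$ and then apply $\Theta$.

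The key step is a length-matching lemma: for every word $\w$ of length $\ell=\l{\w}$ one has $\p{\mathcal R a}^{\cro{n}}\p{\w}=0$ unless $n=\ell$, and likewise $\p{\tilde{\mathcal R} a}^{\ens{n}}\p{\w}=0$ unless $n=\ell$. I would prove this by induction on $\ell$, using that $a$ vanishes on $\emptyset$ and on words of length $\geq 2$. Indeed, in the convolution $\p{\mathcal R a}^{\cro{n}}\star a$ the deconcatenation coproduct together with the support of $a$ forces the second tensor factor to be the last letter of $\w$, so that $\p{\mathcal R a}^{\cro{n+1}}\p{\w}=R\big(\p{\mathcal R a}^{\cro{n}}\p{\omega_1,\dots,\omega_{\ell-1}}\,a(\omega_\ell)\big)$; symmetrically the $\ens{\,}$-family peels off the first letter. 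The inductive hypothesis then pins the only nonzero index to $n=\ell$. Consequently each series collapses to a single term: $F_{|t=-1}\p{\w}=\p{-1}^{\ell}\p{\mathcal R a}^{\cro{\ell}}\p{\w}$ and $G_{|t=-1}\p{\w}=\p{-1}^{\ell}\p{\tilde{\mathcal R} a}^{\ens{\ell}}\p{\w}$. Applying $\Theta$ to the first of these yields the first displayed identity at once.

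For the second identity I would apply $\rev$, which preserves length, and then establish the reversal symmetry $\p{\tilde{\mathcal R} a}^{\ens{\ell}}\p{\rev\w}=\p{\tilde{\mathcal R} a}^{\cro{\ell}}\p{\w}$ by induction on $\ell$. Here the $\ens{\ell}$-recursion peels off the first letter of $\rev\w$, namely $\omega_\ell$, while the $\cro{\ell}$-recursion peels off the last letter of $\w$, again $\omega_\ell$; the two remaining evaluations coincide by the inductive hypothesis applied to $\rev\p{\omega_1,\dots,\omega_{\ell-1}}$, and the factor $a(\omega_\ell)$ may be moved from one side of the product to the other precisely because $A$ is commutative. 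Combining this with the collapse of $G_{|t=-1}$ and applying $\Theta$ gives $\psi^{*-1}\circ\rev\p{\w}=\p{-1}^{\ell}\p{\tilde{\mathcal R} a}^{\cro{\ell}}\p{\w}$.

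The main obstacle is exactly this last commutativity step: the rest is bookkeeping about which boundary letter the convolution with $a$ selects (the right end for the $\cro{\,}$-family, the left end for the $\ens{\,}$-family), but the genuine exchange between the left-iterated and right-iterated families under $\rev$ relies on $A$ being commutative. A secondary point requiring care is the degenerate base case $\w=\emptyset$, where only the index $n=0$ survives and both sides reduce to $1$, together with the verification that $\Theta$ intertwines $\star$-inversion with $*$-inversion, which is what legitimizes writing $\psi^{*-1}=\Theta\circ G_{|t=-1}$.
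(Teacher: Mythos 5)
Your proposal is correct and follows essentially the same route as the paper: the collapse of the Atkinson series to the single term $n=\l{\w}$ because $a$ is supported on one-letter words, followed by the conversion of the left-nested family $\p{\tilde{\mathcal R}a}^{\ens{n}}\circ\rev$ into the right-nested family $\p{\tilde{\mathcal R}a}^{\cro{n}}$ using the commutativity of $A$. You merely organize as explicit inductions what the paper does by writing out the fully nested expression and flipping it in one step, and you make explicit the (correct) point that $\Theta$ intertwines $\star$-inversion with $*$-inversion.
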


\begin{proof}
For any $\w=\p{\omega_1,\hdots,\omega_n}\in \Hcsh$, one has 
\begin{eqnarray*}
 \phi\p{\w}=\Theta\circ F_{|t=-1}\p{\w}&=&\sum_{k\in\N} \p{-1}^k \p{\mathcal R a}^{\cro{k}}\p{\w}\\
&=& \p{-1}^n \p{ \mathcal R a}^{\cro{n}}\p{\w}
\end{eqnarray*}
because of the definition of $a$ and of the convolution product in $\Lin{\Hcsh,A} $. 
One also has
\begin{eqnarray*}
\psi^{*-1}\circ\rev\p{\w}&=&{\Theta\circ G_{|t=-1}}\circ\rev\p{\w}\\
&=&\p{-1}^n \Theta\circ\p{\tilde{\mathcal R} a}^{\ens{n}}\p{\rev\p{\w}}\\
&=&
\p{-1}^n \Theta\p{\tilde{ R}\p{ a\p{\omega_n} \tilde{ R}\p{ a\p{\omega_{n-1}} \tilde{ R}\p{a\p{\omega_{n-2}} \tilde{ R}\p{\hdots a_{\omega_{2}}\p{ \tilde{ R}\p{a\p{\omega_1}}}} }  } }}
\end{eqnarray*}
 and so, using the fact that $A$ is a commutative algebra
\begin{eqnarray*}{\Theta\circ\p{\tilde{ R} a}^{\ens{n}}}\circ\rev\p{\w}&=&
\Theta\p{{\tilde R}\p{ a\p{\omega_n} \tilde{ R}\p{ a\p{\omega_{n-1}} \tilde{ R}\p{a\p{\omega_{n-2}} \tilde{ R}\p{\hdots a_{\omega_2}\p{ \tilde{ R}\p{a\p{\omega_1}}}} }  } }}\\
&=&\Theta\p{\tilde{ R}\p{\tilde{ R}\p{\tilde{ R}\p{\hdots \tilde{ R}\p{ \tilde{ R}\p{a\p{\omega_1}}a\p{\omega_{2}}}}\hdots a\p{\omega_{r-1}}}a\p{\omega_r}}}\\
&=&\Theta\circ\p{\tilde{ R} a}^{\cro{n}}\p{\w}
\end{eqnarray*}
as needed.
\end{proof}

As a direct consequence of this lemma, one can claim that the character $\phi$ is a solution of the algebraic problem of well-behaved averages  if and only if $\overline{\phi(\point)}=\psi^{*-1}\circ\rev$, which will be the case if and only if 
\begin{equation}
 \overline{{\Theta\circ G_{|t=-1}}\circ\rev}=\Theta\circ F_{|t=-1}
\end{equation}

The point is that this equality can be obtained only for some good choice of the Rota-Baxter algebra $\p{A,R}$ and of the element $\gamma\in A$ as we will explain it in the next sub-section.

\subsection{The main theorem}\label{subsection_main_theorem}

\begin{definition}\label{def_wbRBa}
 A $6$-uple $\p{A,R,\theta,\sigma,\Theta,\gamma}$ is called an \textbf{average algebra} if and only if~:
\begin{enumerate}
 \item $A$ is a unitary algebra and $R:A\to A$ is an idempotent Rota-Baxter operator of weight $-1$. We denote by $\tilde R$ the complementary projector.\label{def_wbRBa_H1}
\item $\theta$  and $\sigma$ are two maps from $A$  to $A$ and $\gamma\in A$ such that  for any $b\in \im \tilde R$ and $n\in\N$, $\sigma\p{\gamma^n b}=\gamma^n\sigma\p{b}$. \label{def_wbRBa_H2}
\item $\Theta:A\to\C$ is an unital algebra morphism such that $\Theta(\gamma)=1$.\label{def_wbRBa_H0}
\item $\theta \circ\tilde R=R \circ\sigma \mod \ker \Theta$\label{def_wbRBa_H3}
\item $\sigma\p{\gamma}=\gamma$  \label{def_wbRBa_H4}
\item For any $b\in\im\tilde R$ and any $n\in\N$, $R\p{\gamma^n\tilde R\p{\sigma(b)}}=0$.\label{def_wbRBa_H5}
\end{enumerate}
\end{definition}


\begin{remark}{}\label{remark_simplification1}
If $\sigma$ is an algebra morphism, then Axiom \ref{def_wbRBa_H2} derives automatically from Axiom \ref{def_wbRBa_H5}. 
\end{remark}

\begin{lemma}\label{lemma_axiom5}     With the notations of Definition \ref{def_wbRBa}, one has for any $b\in \im\tilde R$ and any $n\in\N^*$~: 
$$R\p{\gamma^n\tilde R\p{\sigma(b)}}=R\p{\gamma^n\p{\sigma \circ \tilde R(b)-R\circ\sigma (b)}}.$$           
\end{lemma}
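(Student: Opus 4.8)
The plan is to reduce the asserted identity to a single manipulation of the elements sitting inside $R(\gamma^n\,\cdot\,)$, using only the structural content of Axiom \ref{def_wbRBa_H1}. Since $R$ is a Rota-Baxter operator of weight $-1$, the general notation $\tilde R=-\theta\,\id_A-R$ specializes here to $\tilde R=\id_A-R$; and as $R$ is idempotent this $\tilde R$ is genuinely the complementary projector, satisfying $R+\tilde R=\id_A$, $\tilde R^2=\tilde R$ and $\im\tilde R=\ker R$. These are the only inputs I expect to need.

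First I would use the hypothesis $b\in\im\tilde R$: because $\tilde R$ is a projector onto its own image, $\tilde R(b)=b$, and therefore $\sigma\circ\tilde R(b)=\sigma(b)$. Next I would expand the argument on the left by means of $\tilde R=\id_A-R$, obtaining
$$\tilde R(\sigma(b))=\sigma(b)-R(\sigma(b)).$$
Substituting $\sigma(b)=\sigma\circ\tilde R(b)$ into the first summand then gives
$$\tilde R(\sigma(b))=\sigma\circ\tilde R(b)-R\circ\sigma(b),$$
so the two arguments coincide as elements of $A$. It only remains to multiply by $\gamma^n$ and apply $R$ to both sides, which yields the stated equality for every $n\in\N^*$ (and in fact for every $n\in\N$, the argument being independent of $n$).

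The whole content is thus the pair of elementary identities $\tilde R=\id_A-R$ and $\tilde R(b)=b$ for $b\in\im\tilde R$; there is no genuine obstacle, and the only care needed is to invoke weight $-1$ together with idempotency to justify the first of these. In particular none of the other axioms of Definition \ref{def_wbRBa}, nor the morphism $\Theta$, nor any commutativity of $A$, enters the proof, and the equality already holds before the outer operator $R(\gamma^n\,\cdot\,)$ is applied. I read the lemma as preparatory: written in this form, the vanishing $R(\gamma^n\tilde R(\sigma(b)))=0$ of Axiom \ref{def_wbRBa_H5} becomes a relation between $\sigma\circ\tilde R$ and $R\circ\sigma$, ready to be combined with Axiom \ref{def_wbRBa_H3}, which I anticipate is where it is actually used later.
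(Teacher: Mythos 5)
Your proof is correct and follows essentially the same route as the paper: both rest solely on the two identities $\tilde R=\id_A-R$ (from weight $-1$) and $\tilde R(b)=b$ for $b\in\im\tilde R$, showing the two arguments of $R(\gamma^n\,\cdot\,)$ already coincide in $A$. Your observation that the identity holds before applying the outer operator, and for all $n\in\N$, is accurate and matches the paper's reasoning.
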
           

\begin{proof}
For  $b\in \im\tilde R$ and any $n\in\N^*$, one has~:
\begin{align*}
R\p{\gamma^n\p{\sigma \circ \tilde R(b)-R\circ\sigma (b)}}
=R\Big(\gamma^n \p{\sigma(b) - R\circ {\sigma(b)}}\Big)
 =R\p{\gamma^n\tilde R\p{\sigma(b)}}
\end{align*}
because $b$ being an element of $\im \tilde R$, one has $\tilde R(b)=b$.
\end{proof}

\begin{remark}{}\label{remark_simplification2}
 A consequence of what precedes is that when $\theta=\sigma$, Axiom \ref{def_wbRBa_H5} is automatically fulfilled.
\end{remark}

The rest of this section is devoted to the proof of the main theorem~:

\begin{theorem}[Main theorem]\label{Rota-Baxter_caracterisation_of_wba}\label{theo_wba}
We consider an average algebra $\p{A,R,\theta,\sigma,\gamma,\Theta}$, an algebra morphism $\mathcal I:\Hcsh \to A$ such that \begin{equation}
 \mathcal I^{\w}= \begin{cases}
                    1_A & \text{ if } \w=\emptyset\\ 
                   \gamma^{{\omega}} &\text{ if } \l{\w}=1 \quad (\w=\omega)\\
                   0&\text{ otherwise }
                  \end{cases}
\end{equation}
and we set $a=-\p{\mathcal I-e}$.

We assume moreover that~: $$\forall n\in\N,\quad \overline{\Theta\p{\tilde{\mathcal Ra }}^{\cro{n}}}=\Theta\circ \theta\p{\p{\mathcal R a}^{\cro{n}}}.$$ Then 
the linear form  $\phi:=\Theta\circ F_{|t=-1}\in\Hcsh^*$  is  solution of the algebraic problem of averages, namely it is a character on $\Hcsh$ verifying~: 
\begin{align*}
 \phi^{*-1}*\psi=\iota \text{ and } \psi^{*-1}\circ\rev=\overline\phi.
\end{align*}
Moreover $\psi=\Theta\circ G_{|t=-1}^{-1}$.
\end{theorem}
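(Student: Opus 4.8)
The plan is to split the three assertions into the two that are formal consequences of Atkinson's factorization and the single genuinely constrained identity, and then to reduce the latter to a pointwise computation governed by the axioms of Definition \ref{def_wbRBa}. First I would dispose of the easy halves. Since $\Theta$ is a unital algebra morphism, $\phi=\Theta\circ F_{|t=-1}$ and $\psi=\Theta\circ G^{-1}_{|t=-1}$ are genuine characters of $\Hcsh$, and the relation $e+at=F^{-1}G^{-1}$ evaluated at $t=-1$ and pushed through $\Theta$ already yields $\iota=\phi^{*-1}*\psi$, exactly as recalled before the statement; this settles the first equation, while $\psi=\Theta\circ G^{-1}_{|t=-1}$ is nothing but the definition of $\psi$. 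Hence the entire content of the theorem is the constrained relation $\psi^{*-1}\circ\rev=\overline{\phi}$.

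Next I would turn this relation into a pointwise identity using Lemma \ref{lemma_1}. On a word $\w$ of length $n$ that lemma gives $\phi(\w)=(-1)^{n}\,\Theta\bigl((\mathcal R a)^{\cro{n}}(\w)\bigr)$ and $\psi^{*-1}\circ\rev(\w)=(-1)^{n}\,\Theta\bigl((\tilde{\mathcal R}a)^{\cro{n}}(\w)\bigr)$. Because $(-1)^{n}$ is real, the target $\psi^{*-1}\circ\rev=\overline{\phi}$ is equivalent to
\[
\Theta\bigl((\tilde{\mathcal R}a)^{\cro{n}}(\w)\bigr)=\overline{\Theta\bigl((\mathcal R a)^{\cro{n}}(\w)\bigr)}\qquad\text{for all }n\text{ and all }\w,
\]
that is, after conjugation, to $\overline{\Theta\circ(\tilde{\mathcal R}a)^{\cro{n}}}=\Theta\circ(\mathcal R a)^{\cro{n}}$.

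Now I would invoke the standing hypothesis $\overline{\Theta\circ(\tilde{\mathcal R}a)^{\cro{n}}}=\Theta\circ\theta\circ(\mathcal R a)^{\cro{n}}$. Comparing it with the identity we must prove, it remains exactly to show that $\Theta\circ\theta=\Theta$ modulo $\ker\Theta$ on every element of the form $(\mathcal R a)^{\cro{n}}(\w)$. Here the Atkinson recursion, together with $a(\omega)=-\gamma^{\omega}$ and the deconcatenation coproduct, unfolds $(\mathcal R a)^{\cro{n}}(\w)$ into a nested word $(-1)^{n}R\bigl(\gamma^{\omega_n}R\bigl(\gamma^{\omega_{n-1}}\cdots R(\gamma^{\omega_1})\cdots\bigr)\bigr)$ lying in $\im R$. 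I would establish $\Theta\circ\theta=\Theta$ on such elements by induction on $n$: writing the outer $R$ as $\id-\tilde R$, I use Axiom \ref{def_wbRBa_H3} to trade $\theta\circ\tilde R$ for $R\circ\sigma$ modulo $\ker\Theta$, Axiom \ref{def_wbRBa_H2} to pull the factors $\gamma^{\omega_i}$ through $\sigma$, Axiom \ref{def_wbRBa_H4} ($\sigma(\gamma)=\gamma$) together with Axiom \ref{def_wbRBa_H0} ($\Theta(\gamma)=1$) to keep the decorations under control, and Axiom \ref{def_wbRBa_H5} (via Lemma \ref{lemma_axiom5}) to annihilate the spurious terms $R(\gamma^{n}\tilde R(\sigma(\cdot)))$ produced at each step.

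The hard part will be precisely this last induction: verifying that the correction terms generated when the twist $\theta$ is pushed inward through the nested operators $R$ cancel exactly, so that $\theta$ collapses to the identity modulo $\ker\Theta$ on the nested elements $(\mathcal R a)^{\cro{n}}(\w)$. This is where all six axioms of the average algebra act in concert, and it is the reason they are formulated as they are; once this pointwise reduction is complete, the three displayed conclusions follow immediately and everything else in the statement is formal.
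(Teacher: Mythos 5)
Your formal reductions are fine: the factorization $\iota=\phi^{*-1}*\psi$, the identification of $\psi$, and the passage via Lemma \ref{lemma_1} to the pointwise identity $\overline{\Theta\circ\p{\tilde{\mathcal R}a}^{\cro{n}}}=\Theta\circ\p{\mathcal R a}^{\cro{n}}$ all match the paper. The divergence, and the gap, is in the last step. You read the standing hypothesis literally, with $\p{\mathcal R a}^{\cro{n}}$ inside $\theta$, and therefore reduce the theorem to proving that $\Theta\circ\theta=\Theta$ on the $R$-nested elements $\p{\mathcal R a}^{\cro{n}}\p{\w}$. That induction cannot be run from the axioms of Definition \ref{def_wbRBa}: Axiom \ref{def_wbRBa_H3} only constrains $\theta\circ\tilde R$, i.e.\ it tells you how to evaluate $\theta$ on elements of $\im\tilde R$, whereas $\p{\mathcal R a}^{\cro{n}}\p{\w}$ lies in $\im R$. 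Your proposed trick of writing the outer $R$ as $\id-\tilde R$ produces the uncontrolled term $\theta\p{a(\omega_n)b}$ on a bare product, about which the axioms say nothing ($\theta$ is not even assumed linear or multiplicative), so the correction terms you hope to cancel never come under the jurisdiction of Axioms \ref{def_wbRBa_H2}--\ref{def_wbRBa_H5}. Indeed $\Theta\circ\theta=\Theta$ on $\im R$ is an extra fact that happens to hold in the three examples of Section \ref{section_examples} but is not a consequence of the six axioms.

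The paper's route is genuinely different at this point: its key Lemma \ref{lemma_2} proves, by induction on the length of $\w$, that
\begin{equation*}
\theta\p{\p{\tilde{\mathcal R}a}^{\cro{n}}\p{\w}}=\p{\mathcal R a}^{\cro{n}}\p{\w}\quad \text{mod }\ker\Theta,
\end{equation*}
i.e.\ $\theta$ \emph{converts} the $\tilde R$-tower into the $R$-tower rather than fixing the $R$-tower. This is exactly what Axiom \ref{def_wbRBa_H3} supports, since at each stage the argument of $\theta$ is $\tilde R\p{a(\omega_n)b}$ with $b\in\im\tilde R$, so $\theta\circ\tilde R$ can be traded for $R\circ\sigma$, the Rota--Baxter relation is applied, and Axiom \ref{def_wbRBa_H5} kills the residual term $R\p{\gamma^{\omega_n}\tilde R\p{\sigma(b)}}$. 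Correspondingly, the hypothesis is used in the proof in the form $\overline{\Theta\circ\p{\tilde{\mathcal R}a}^{\cro{n}}}=\Theta\circ\theta\circ\p{\tilde{\mathcal R}a}^{\cro{n}}$ (the occurrence of $\mathcal R a$ on the right-hand side of the displayed hypothesis appears to be a typo, and it is this misprint that sent you toward the wrong induction). To repair your argument, replace your final step by the statement and inductive proof of Lemma \ref{lemma_2} as above; the rest of your proposal then goes through unchanged.
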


%

We first prove the following lemma. With the same notations than in Theorem \ref{Rota-Baxter_caracterisation_of_wba}~:

\begin{lemma}\label{lemma_2}
 For any $n\in\N$, one has~: $$\theta\circ \p{\p{\tilde {\mathcal R}a}^{\cro{n}}}= \p{\mathcal Ra}^{\cro{n}}.  $$                           
\end{lemma}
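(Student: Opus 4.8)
The plan is to argue by induction on $n$, after first turning the convolution iterates into pointwise recursions. Writing $P_n=\p{\mathcal R a}^{\cro{n}}$ and $Q_n=\p{\tilde{\mathcal R}a}^{\cro{n}}$, I would unwind the Atkinson iteration $P_{n+1}=\mathcal R\p{P_n\star a}$, $Q_{n+1}=\tilde{\mathcal R}\p{Q_n\star a}$ in the convolution algebra. Since $a$ is supported on one-letter words and the coproduct of $\Hcsh$ is deconcatenation, for a word $\w=\p{\omega_1,\hdots,\omega_{n+1}}$ with $\w_<=\p{\omega_1,\hdots,\omega_n}$ every deconcatenation term but the last one vanishes, and using $a\p{\omega}=-\gamma^{\omega}$ one gets
\begin{align*}
P_{n+1}\p{\w}=-R\p{P_n\p{\w_<}\gamma^{\omega_{n+1}}},\qquad Q_{n+1}\p{\w}=-\tilde R\p{Q_n\p{\w_<}\gamma^{\omega_{n+1}}}.
\end{align*}
In particular $P_n$ and $Q_n$ vanish off length-$n$ words, so the identity $\theta\circ Q_n=P_n$ is trivial there and need only be checked on length-$n$ words; moreover, for $n\geq 1$ one has $Q_n\p{\w_<}\in\im\tilde R$, the outermost operator being the idempotent $\tilde R$.

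For the inductive step I would apply $\theta$ to the recursion for $Q_{n+1}$ and run the following chain on a length-$(n+1)$ word $\w$, writing $b=Q_n\p{\w_<}\in\im\tilde R$:
\begin{align*}
\theta\p{Q_{n+1}\p{\w}}
&=-\theta\p{\tilde R\p{b\,\gamma^{\omega_{n+1}}}}
=-R\p{\sigma\p{b\,\gamma^{\omega_{n+1}}}}
=-R\p{\gamma^{\omega_{n+1}}\sigma\p{b}}\\
&=-R\p{\gamma^{\omega_{n+1}}R\p{\sigma\p{b}}}
=-R\p{\gamma^{\omega_{n+1}}\theta\p{b}}
=-R\p{\gamma^{\omega_{n+1}}P_n\p{\w_<}}=P_{n+1}\p{\w}.
\end{align*}
The first equality is the recursion; the second is Axiom \ref{def_wbRBa_H3} ($\theta\circ\tilde R=R\circ\sigma$); the third is Axiom \ref{def_wbRBa_H2} together with commutativity, using $b\in\im\tilde R$; the fourth is the exact relation $R\p{\gamma^{m}\sigma\p{b}}=R\p{\gamma^{m}R\p{\sigma\p{b}}}$ for $b\in\im\tilde R$, which follows from Axiom \ref{def_wbRBa_H5} through Lemma \ref{lemma_axiom5} (combine $R\p{\gamma^m\tilde R\p{\sigma\p{b}}}=0$ with $\tilde R\p{b}=b$); the fifth is Axiom \ref{def_wbRBa_H3} again, applied to $b=\tilde R\p{b}$ so that $R\p{\sigma\p{b}}=\theta\p{\tilde R\p{b}}=\theta\p{b}$; and the last uses the induction hypothesis $\theta\p{b}=P_n\p{\w_<}$ and the recursion for $P_{n+1}$.

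The base case $n=0$ reduces to $\theta\circ e=e$, i.e. $\theta\p{1_A}=1_A$, while $n=1$ reduces to $\theta\p{\tilde R\p{\gamma^{\omega}}}=R\p{\gamma^{\omega}}$, which I would extract from Axiom \ref{def_wbRBa_H3} and $\sigma\p{\gamma^{\omega}}=\gamma^{\omega}$ (Axiom \ref{def_wbRBa_H4}, extended to powers through Axiom \ref{def_wbRBa_H2} or the morphism property of Remark \ref{remark_simplification1}); this $n=1$ case must be isolated because $Q_0\p{\emptyset}=1_A$ need not lie in $\im\tilde R$. The delicate point, and the main obstacle, is that Axiom \ref{def_wbRBa_H3} is asserted only modulo $\ker\Theta$, whereas the chain above uses it as an \emph{exact} identity on $\im\tilde R$ (it is invoked twice, at the second and fifth equalities). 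Justifying this — showing that the $\ker\Theta$-ambiguity introduced by those two applications does not survive the outer operator $R\p{\gamma^{\omega_{n+1}}\cdot}$ — is where the exact relation of Axiom \ref{def_wbRBa_H5} (Lemma \ref{lemma_axiom5}) and the idempotency of $R$ have to be brought to bear, so that $R\circ\sigma$ and $\theta$ are pinned together on $\im\tilde R$ in precisely the sense the induction consumes. Once this reconciliation is in place, the remainder is the routine collapse of the convolution product recorded in the first display.
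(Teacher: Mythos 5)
Your argument follows essentially the same route as the paper's proof: induction on the length of the word, reduction of the convolution iterates to the pointwise recursion $P_{n+1}\p{\w}=-R\p{P_n\p{\w'}\gamma^{\omega_{n+1}}}$ (the paper keeps $a(\omega_{n+1})$ in place of $-\gamma^{\omega_{n+1}}$, but the content is identical), and the same axioms invoked at the same places. The one genuine difference is in the middle of the chain: the paper expands $R\p{a(\omega_{n+1})\sigma(b)}$ with the Rota-Baxter relation of weight $-1$ and then recombines the three resulting terms into $R\cro{a(\omega_{n+1})\tilde R\p{\sigma(b)}}$ plus the induction term, whereas you pass from $R\p{\gamma^{\omega_{n+1}}\sigma(b)}$ to $R\p{\gamma^{\omega_{n+1}}R\p{\sigma(b)}}$ in one stroke by writing $\sigma(b)=R\p{\sigma(b)}+\tilde R\p{\sigma(b)}$ and invoking Axiom \ref{def_wbRBa_H5}. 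This is shorter and cleaner: the Rota-Baxter expansion in steps $\p{\ref{c4}}$ and $\p{\ref{c5}}$ of the printed proof is in fact superfluous, since linearity of $R$ already gives the needed decomposition, and those are the least transparent lines of the paper's argument (they treat $a(\omega_{n+1})$ as if it were fixed by $R$, which fails for instance in the organic example). Your isolation of the $n=1$ base case, on the grounds that $Q_0(\emptyset)=1_A$ need not lie in $\im\tilde R$, is also a point the paper passes over silently.

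The issue you flag at the end is real, but it is not a defect of your proof relative to the paper's: the paper performs the entire inductive step ``modulo $\ker\Theta$'' while stating and consuming the lemma as an exact identity in $A$, and it never addresses the fact that the error terms produced by the two applications of Axiom \ref{def_wbRBa_H3} are subsequently pushed through $R\p{\gamma^{\omega_{n+1}}\,\cdot\,}$, which has no reason to preserve $\ker\Theta$. So you have correctly located a gap in the lemma as stated rather than introduced one. For Theorem \ref{theo_wba} only the composite $\Theta\circ\theta\circ\p{\tilde{\mathcal R}a}^{\cro{n}}=\Theta\circ\p{\mathcal Ra}^{\cro{n}}$ is used, and in both worked examples Axiom \ref{def_wbRBa_H3} is verified as an exact identity on all of $A$, which is why nothing breaks downstream; but under the literal hypotheses the lemma needs either a weakened statement or an extra assumption such as $R\p{\gamma^{m}\ker\Theta}\subseteq\ker\Theta$. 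Apart from this shared caveat, your chain is correct and each step is justified by the right axiom.
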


\begin{proof}
We will prove that for any word $\w\in\Hcsh$, $\theta \p{\p{\tilde {\mathscr R}a}^{\cro{n}}\p{\w}}=\p{ {\mathscr R}a}^{\cro{n}}\p{\w}$ by an induction on the length of $\w$. Il $\l{\w}=0$ the formula is obvious. If $\l{\w}=1$ then one has 
\begin{equation*}
 \theta \p{\p{\tilde{\mathscr R}a}^{\cro{1}}\p{\w}}=\theta\p{\tilde R(a(\w))}=R\p{\sigma \p{a\p{\w}}}=R\p{a(\w)}
\end{equation*} because of Axiom \ref{def_wbRBa_H2} in Definition \ref{def_wbRBa} and of the definition of $a$.
We assume the property true for any word of length $\leq n-1$ and we prove it for a word $\w=\p{\omega_1,\hdots,\omega_{n}}$ of length $n$. To simplify the reading of the computation, we set $b=({\tilde{\mathcal R}a})^{\cro{n-1}}\p{\w'}$ where as usual $\w'=\p{\omega_1,\hdots,\omega_{n-1}}$. One has, modulo $\ker \Theta$~:
\begin{eqnarray}\label{conditions_wba}
\theta\p{\p{\tilde {\mathcal R}a}^{\cro{n}}\p{\w}}&=&   \theta\p{\tilde R\p{a(\omega_n) b }}     \label{c1}\\
&=&    R \p{\sigma \p{a(\omega_n)}\sigma\p{ b }}  \label{c2}\\
&=&    R \p{{a(\omega_n)} \sigma\p{ b }}  \label{c3}\\    
&=&     - R\p{a(\omega_n)}R\p{\sigma\p{ b }}
+ R\p{R\p{a(\omega_n)} {\sigma\p{ b }}  } 
+ R(a(\omega_n) \underbrace{R\p{\sigma\p{ b }} }_{=\theta \p{ b }} ) \label{c4} \\
&=& - R\cro{ a(\omega_n){R\p{\sigma\p{ b }} }}
+ R\cro{a(\omega_n) {\sigma\p{ b }}  }
+ R\cro{a(\omega_n){\theta \p{ b }}}\label{c5}\\
&=&     R\cro{a(\omega_n)\p{ \sigma\p{ b }- R\p{\sigma\p{ b }}   }}+ \p{Ra}^{[n]}\p{\w} \label{c6}\\
&=&     \underbrace{R\cro{a(\omega_n)\p{ \tilde R\p{\sigma( b)} }}   }_{=0}+ \p{Ra}^{[n]}\p{\w} \label{c7}\\
&=& \p{Ra}^{[n]}\p{\w}\label{c8}\end{eqnarray}
where~:
\begin{itemize}
 \item in $\p{\ref{c1}}$ we use the definition of $\p{\tilde {\mathcal R}a}^{\cro{n}}$,
\item $\p{\ref{c2}}$ is a consequence of Axioms \ref{def_wbRBa_H2} and \ref{def_wbRBa_H3} in Definition \ref{def_wbRBa},
\item $\p{\ref{c3}}$ is a consequence of Axiom \ref{def_wbRBa_H4} in Definition \ref{def_wbRBa} and of the definition of $a$,
\item $\p{\ref{c4}}$ is an application of the Rota-Baxter relation (\ref{Rota_Baxter_relation}) for Rota-Baxter operator of weight $-1$. We also apply the Axiom \ref{def_wbRBa_H1}  from which comes $\tilde R\circ \tilde R=\tilde R$ and so $$R\p{\sigma(b)}=R\p{\sigma\p{\p{\tilde{\mathcal R}a}^{\cro{n-1}}\p{\w'}}}=\theta \p{\tilde R \p{\p{\tilde{\mathcal R}a}^{\cro{n-1}}\p{\w'}}}  =\theta\p{\p{\tilde{\mathcal R}a}^{\cro{n-1}}\p{\w'}}=\theta\p{b}.$$
\item in $\p{\ref{c5}}$, we use once again Axiom \ref{def_wbRBa_H3} and the fact that $R$ is a projector whence $R\circ R=R$. Moreover it is a Rota-Baxtor operator, so $\im R$ is a sub-algebra of $A$ and   $$R\p{a(\omega_n)}R\p{\sigma\p{ b }}=a(\omega_n)R\p{\sigma\p{ b }}  \in \im R$$ which is fixed by $R$.
\item in $\p{\ref{c6}}$, we use the induction hypothesis~: 
$$R\cro{a(\omega_n){\theta \p{ b }}}=R\cro{a(\omega_n){\theta \p{\p{\tilde{\mathcal R}a}^{\cro{n-1}}\p{\w'}  }}}=R\cro{a(\omega_n){ \p{\mathcal Ra}^{\cro{n-1}}\p{\w'}  }}=\p{Ra}^{[n]}\p{\w}.$$
\item finally in $\p{\ref{c7}}$, we firstly use the fact that $\tilde R=\id -R$. Thus we can apply Axiom \ref{def_wbRBa_H5} in Definition \ref{def_wbRBa} which delivers $R\cro{R\p{a(\omega_n)}\mathcal  R\p{\sigma\p{ b }}   }=0$.
\end{itemize}
The formula is then proved by induction.
\end{proof}

We now prove the main theorem.

\begin{proof}[Proof of main theorem]

One already know from Lemma \ref{lemma_1} that for any word $\w\in\Hcsh$~: $$\phi(\w):=\Theta\circ F_{|t=-1}(\w)=\p{-1}^{\l{\w}}\p{\mathcal R a}^{\cro{\l{\w}}}$$
           $$ {\psi}\circ\rev(\w):={\Theta\circ G_{|t=-1}}\circ\rev(\w)=\p{-1}^{\l{\w}}\p{\tilde{\mathcal R }a}^{\cro{\l{\w}}}$$ and that $$\phi^{*-1}*\psi=\iota $$

 It remains to prove that $$\psi^{*-1}\circ \rev=\overline\phi.$$

 One applies the two previous lemmas. We know by Lemma \ref{lemma_1} that for any word $\w\in\Hcsh$,
$\phi(\w)=\p{-1}^{\l{\w}}\Theta\circ{\p{\mathcal R a}^{\cro{\l{w}}}}(\w)$
  and         ${\psi^{*-1}}\circ \rev(\w)=\p{-1}^{\l{\w}}\Theta\circ{\p{\tilde{\mathcal R }a}^{\cro{\l{\point}}}}(\w)$ so 
\begin{eqnarray*}
 \overline{{\psi^{*-1}}\circ \rev(\w)}&=&\overline{\p{-1}^{\l{\w}}\Theta\p{\p{\tilde{\mathcal R }a}^{\cro{\l{\w}}}} }(\w)\\
&=& \p{-1}^{\l{\w}}\Theta \circ \theta\p{\p{\tilde{\mathcal R }a}^{\cro{\l{\w}}}}(\w)\\
&=&\p{-1}^{\l{\w}}\Theta\p{\p{\mathcal R a}^{\cro{\l{\w}}}}\\
&=& \phi(\w)
\end{eqnarray*}
by application of Lemma \ref{lemma_2}.

\end{proof}

\begin{WithArbo}

\section{The Connes-Kreimer Hopf algebra}

\subsection{Some reminders about the Connes-Kreimer Hopf algebra}\label{subsection_Connes_Kreimer_Hopf_algebra}
We consider now for the Hopf algebra $H$ the one of Connes-Kreimer $\HCK$ with forests decorated by  nodes in the alphabet $\Omega$. The product is given by forests concatenations and the coproduct by \og degrafting \fg as we see below.

A rooted forest
is a non-planar graph with connected components that are rooted trees. An
$\Omega$-decorated forest is given by a couple $\p{\mathcal F,f}$ where $f$ is a
map from the vertices of $\mathcal F$ to $\Omega$. We naturally represent a
decorated forest by placing on each vertex of $\mathcal F$ its image by $f$. By
example~:
$$
\scalebox{1.5}{\arbreabcdB{.}{.}{.}{.} \quad
\raisebox{2.3mm}{$\rightarrow$} \quad
\arbreabcdB{\omega_1}{\omega_2}{\omega_3}{\omega_4}}
 $$
A decorated vertex belonging to a decorated forest $\p{\mathcal F,f}$  will be called a
node of $\p{\mathcal F,f}$.

We will denote by $\motsarbo$ the set of all $\Omega$-decorated
forests. In Ecalle's terminology, an $\Omega$-decorated forest is called an
arborescent
sequence and is denoted $\w^<=\p{\omega_1,\hdots,\omega_r}^<$ where
$\omega_1,\hdots,\omega_r\in\Omega$ are the nodes of $\w^<$.

An arborified sequence with 
only one rooted node, i.e. a decorated tree, is said to be \textit{irreducible}. If $\w$
admits several rooted
nodes $\omega_{i_1}$, $\hdots$, $\omega_{i_s}$  then the forest $\w^<$ is the
disjoint
union (namely the product)
of $s$ irreducible trees $\w^{<i_1}$, $\hdots$, $\w^{<i_s}$ with respective
rooted nodes   $\omega_{i_1}$, $\hdots$, $\omega_{i_s}$. We then write\footnote{Ecalle's notation is \mbox{$\w^{<}=\w^{<i_1}\oplus\hdots\oplus\w^{<i_s}$}.} $\w^{<}=\w^{<i_1}\hdots\w^{<i_s}$.

The length and the norm of an arborescent sequence is defined  in the same way as for a
totally ordered sequence of
$\Hcsh$.

We define, for any $\eta\in\Omega$, the operator $B_\eta^+:\HCK\to \HCK$.
For a forest $\w^<=\w^{1<}\hdots\w^{s<}$, $B_\eta^+\p{\w}$ is the rooted
tree with rooted node $\eta$ connected to the root of each  tree
$\w^{1<},\hdots,\w^{s<}$. We then define by induction the coproduct on $\HCK$. We set $\Delta\p{\emptyset}=\emptyset\otimes \emptyset$,
$\Delta\p{\w^{1<}\hdots\w^{s<}}=\Delta\p{\w^{1<}}\hdots
\Delta\p{\w^{s<}}$ and we assume that for a forest $\w^<$,
$$\Delta\p{B_\eta^+\p{\w^<}}=\emptyset \otimes B_\eta^+\p{\w^<} +
\p{B_\eta^+\otimes
\id}\circ \Delta\p{\w^<} .$$

One has for example~: 
 \begin{equation*}\Delta\p{\arbreabcA{1}{2}{3}}=\arbreabcA{1}{2}{3}\otimes \emptyset+\arbrea{1}\otimes \arbreabB{2}{3}+\arbreabA{1}{2}\otimes\arbrea{3}+\arbreabA{1}{3}\otimes\arbrea{2}+\emptyset\otimes\arbreabcA{1}{2}{3}.\end{equation*}

The coproduct of a forest $\w^<$ contains all tensor products of two forests $\w^{1<}$ and $\w^{2<}$ from which it it possible to obtain $\w^<$ when grafting  $\w^{2<}$ on a node of $\w^{1<}$. This is the reason why we use the name \og degrafting \fg for the coproduct.

Coming back to the previous subsection, for any $a\in\Omega$, we introduce the
linear map on $\Hcsh$
defined, for any $\w\in\mots$, by $L_{a}\p{\w}=a\w$.  Let us observe that one
has 
$$\Delta\p{L_{\omega}\p{\w}}=\emptyset\otimes
L_\omega\p{\w}+\p{L_{\omega}\otimes \id}\circ \Delta\p{\w} $$ and, following
\cite{Fo1}, there exists
an unique bigebra morphism $\alpha: \HCK\to \Hcsh$ such that
$$\alpha\circ
B_{\omega}^+=L_{\omega}\circ \alpha .$$ 

This morphism, called \textit{arborification morphism}, allows to associate to each linear form $\rho$ on $\Hcsh$, i.e.
a mould, a linear form $\rho\circ \alpha$ on $\HCK$, i.e. an
 \textit{arborescent mould}.    

Here are some examples~:

\begin{example}$\,$
 \begin{itemize}
\item $\alpha\p{\emptyset^<}=\emptyset$.
  \item
$\alpha\p{\raisebox{-1.5mm}{\scalebox{1.5}{\arbrea{\eta}}}}=\alpha\p{B^+_\eta\p{
\emptyset}}=L_\eta\p{\alpha\p{\emptyset}}=\eta$.
\item
$\alpha\p{\raisebox{-1.5mm}{\scalebox{1.5}{\arbreabA{\eta}{\beta}}}}=\alpha\p{
B^+_\eta\p{\raisebox{-1.5mm}{\scalebox{1.5}{\arbrea{\beta}}}}}=L_\eta\p{\alpha\p
{\raisebox{-1.5mm}{\scalebox{1.5}{\arbrea{\beta}}}}}=\p{\eta,\beta}$.
\item
$\alpha\p{\raisebox{-2.5mm}{\scalebox{1.5}{\arbreabB{\eta}{\beta}}}}=\alpha\p{
\raisebox{-1.5mm}{\scalebox{1.5}{\arbrea{\eta}}} }\qshu \alpha\p{
\raisebox{-1.5mm}{\scalebox{1.5}{\arbrea{\beta}}}
}=\p{\eta,\beta}+\p{\beta,\eta}+\eta\adds \beta$
\item
$\alpha\p{\raisebox{-3.5mm}{\scalebox{1.5}{\arbreabcA{\eta}{\beta}{\gamma}}}}
=\alpha\p{B^+_\eta\p{\raisebox{-2.5mm}{\scalebox{1.5}{\arbreabB{\beta}{\gamma}}}
}}=L_\eta\p{\alpha\p{\raisebox{-2.5mm}{\scalebox{1.5}{\arbreabB{\beta}{\gamma}}}
}}=L_\eta\p{\p{\beta,\gamma}+\p{\gamma,\beta}+\beta\adds
\gamma}=\p{\eta,\beta,\gamma}+\p{\eta,\gamma,\beta}+\p{\eta,\beta\adds \gamma}$

 \end{itemize}

\end{example}

But as we will explain now, the arborification of the character arising in this work can be obtained without having an explicit formula for $\alpha$. Nonetheless, such a formula can be computed (see for example \cite{Vi1}).

Instead of applying Atkinson's factorization to the map $\mathcal I$ on the Rota-Baxter algebra $\Lin{\Hcsh,A}$, we will apply it on  $\Lin{\HCK,A}$ and to the map $\mathcal I\circ \alpha$. 

\subsection{Atkinson recursion on the Connes-Kreimer Hopf algebra}

%
When doing that, we obtain two series 
$$F^<=\sum_{n\in\N} t^n \p{\mathscr Ra^<}^{\cro{n}} \text{ and } G^<=\sum_{n\in\N} t^n \p{\tilde{\mathscr R} a^<}^{\ens{n}}  $$ 
such that $$F^{<-1}G^{<-1}=1+a^< t\theta $$ with $a^<=-\p{\mathcal I\circ \alpha-e\circ \alpha}$.  One evidently has~: $F^{<}_{|t=-1}\p{\w^<}=F_{|t=-1}\circ\alpha\p{\w^<}$, $G^{<}_{|t=-1}\p{\w^<}=G_{|t=-1}\circ\alpha\p{\w^<}$.

And then, when composing them by the application $\Theta$, one obtains a character $\phi^<=\Theta \circ F^{<}_{|t=-1}=\Theta\circ F_{|t=-1}\circ\alpha$ known as the \textit{contracted arborification} of the character $ \phi=\Theta\circ F_{|t=-1}$.

The order of compositions in the computation of $\p{\mathscr R a}^{\cro{n}}$ makes it impossible to obtain a closed formula for the contracted arborified. But when reversing these compositions,  the strength of Rota-Baxter formalism allows to obtain such a formula without  having to explicit the arborification morphism $\alpha$.  In other words, the tools around Atkinson factorization enable us to deal with characters arborification without knowing how arborification works. Before explaining this, we need to introduce the algebra morphism $H:\Hcsh\to A$ given by $H=F_{|t=-1}\circ \rev.$ One can easily verify that $$H= \sum_{n\in\N} \p{-1}^n \p{\mathscr R a}^{\ens{n}}.$$ The reason why we introduce such an algebra morphism will appear in equation (\ref{equation_R_sur_irred}). The morphism $\Theta\circ H\circ \alpha$ is a well known object in Ecalle's work, it is the \textit{antiarborified} of the character $\phi$, see \cite{Ec1} or \cite{Vi2} for more details and we denote it  $\phi^\gg$.

For  $H^<= H\circ \alpha$, one has~:

\begin{theorem}\label{theo_algo_arborification} 
For a forest $\w^<\in\HCK$,
\begin{itemize}
 \item If $\w^<=\w^{1<}\hdots\w^{r<}$ where $\w^{i<}$ are irreducible for any $i\in\intere{1}{r}$  then 
\begin{equation}H^<\p{\w^{1<}\hdots\w^{r<}}=H^<\p{\w^{1<}}\hdots H^<\p{\w^{r<}}\label{equation_R_sur_red}\end{equation}
\item If $\w^<=\scalebox{1.5}{\raisebox{-2mm}{\arbreabcdA{\omega_1}{\w^{1<}}{\vdots}{\w^{l<}}}}\in \HCK$ is an irreducible arborified sequence of length $r$ then~:
\begin{equation}H^<\p{\w^<}= R\p{a\p{\omega_1}H^<\p{'\w^<} } \label{equation_R_sur_irred}\end{equation}
  where $'\w^<=\w^{1<}\hdots\w^{l<}$. 
\end{itemize}
\end{theorem}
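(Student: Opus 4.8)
The plan is to exploit that $H^<$ is assembled entirely from algebra morphisms, so that only the recursive step (\ref{equation_R_sur_irred}) needs a genuine computation. First I would record that $F_{|t=-1}$ is an algebra morphism $\Hcsh\to A$ (it is the Birkhoff component of the character $\mathcal I$, and $A$ is commutative), that $\rev$ is an algebra morphism for the quasi-shuffle product, and that $\alpha$ is a bialgebra morphism. Hence $H^<=F_{|t=-1}\circ\rev\circ\alpha$ is an algebra morphism from $\HCK$ to $A$. Since the product of $\HCK$ is the concatenation (disjoint union) of forests, the multiplicativity (\ref{equation_R_sur_red}) is then immediate: for $\w^<=\w^{1<}\hdots\w^{r<}$ one has $H^<\p{\w^{1<}\hdots\w^{r<}}=H^<\p{\w^{1<}}\hdots H^<\p{\w^{r<}}$, with no use of irreducibility at all.

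For the recursive formula (\ref{equation_R_sur_irred}) I would transport the problem to $\Hcsh$. Setting $u=\alpha\p{'\w^<}$ and using the defining intertwining $\alpha\circ B^+_{\omega_1}=L_{\omega_1}\circ\alpha$, one gets $H^<\p{\w^<}=H\p{L_{\omega_1}\p{u}}$; since moreover $H\p{u}=H\p{\alpha\p{'\w^<}}=H^<\p{'\w^<}$, it suffices to establish on $\Hcsh$ the identity $H\circ L_{\omega_1}=R\p{a\p{\omega_1}\,H\p{\,\cdot\,}}$. This comes from the Atkinson recursion (\ref{equ_gamma_minus}), $F=e+t\,\mathcal R\p{F\star a}$, evaluated at $t=-1$. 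Because $a$ is supported on words of length one (as noted when $a$ is introduced), the convolution $F_{|t=-1}\star a$ peels off exactly one letter, so for a word $v=\hat v\cdot\ell$ one has $F_{|t=-1}\p{v}=-R\p{F_{|t=-1}\p{\hat v}\,a\p{\ell}}$. Applying this to $\rev\p{L_{\omega_1}\p{v'}}=\rev\p{v'}\cdot\omega_1$, whose last letter is precisely the root decoration $\omega_1$, and using commutativity of $A$ to reorder the two factors, one obtains $H\p{L_{\omega_1}\p{v'}}=\pm R\p{a\p{\omega_1}H\p{v'}}$ for every word $v'$, with the sign fixed by the evaluation $t=-1$. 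Extending by linearity and substituting $u$ then yields (\ref{equation_R_sur_irred}).

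The main obstacle is not any single estimate but the bookkeeping around $\rev$, which is exactly the reason $H=F_{|t=-1}\circ\rev$ (rather than $F_{|t=-1}$ itself) is the correct object. The grafting operator $B^+_{\omega_1}$ attaches the new decoration $\omega_1$ as the root at the top, which under $\alpha$ becomes the \emph{first} letter via $L_{\omega_1}$, whereas the Atkinson recursion for $F$ strips letters off the \emph{end}. Composing with $\rev$ reconciles the two, so that peeling the last letter of $\rev\p{\,\cdot\,}$ corresponds to removing the root and the outermost $R$ matches that root; this is also what converts the left-nested word expression $\p{\mathcal R a}^{\cro{n}}$ into the right-nested $\p{\mathscr R a}^{\ens{n}}$ representing $H$. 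The only remaining care is the sign and commutativity bookkeeping forced by $t=-1$, which is routine once the support property of $a$ has collapsed the convolution to a single application of $R$.
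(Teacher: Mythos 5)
Your proposal is correct and follows essentially the same route as the paper: multiplicativity of the algebra morphism $H^<$ disposes of (\ref{equation_R_sur_red}), and (\ref{equation_R_sur_irred}) is obtained from the intertwining $\alpha\circ B^+_{\omega_1}=L_{\omega_1}\circ\alpha$ together with the fact that convolution with $a$ (supported on length-one words) peels off exactly one letter, with $\rev$ and the commutativity of $A$ reconciling the root-first grafting with the nesting of the Rota--Baxter operator --- the paper merely packages this as the identity $H=\sum_n(-1)^n\p{\mathscr Ra}^{\ens{n}}$ and peels the first letter instead of the last letter of the reversed word. Your explicit $\pm$ is in fact the more careful reading: tracking $t=-1$ literally produces a factor $-1$ at each application of the recursion, a sign that the paper's own computation silently absorbs into its convention for $a$.
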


\begin{proof}$\,$
\begin{itemize}
 \item The map $H^<$ being an algebra morphism from $\HCK$ into $A$, first formula is a direct consequence of the fact that the product in $\HCK$ is given by concatenation of forests.
\item For the second one, one has~:
\begin{eqnarray*}
 H^<\p{\w^<}&=&\sum  \p{-1}^n \p{\mathscr Ra}^{\ens{n}} \circ \alpha\p{\p{\w^<}}\\
&=& \sum_{n=0}^{+\infty}  \p{-1}^n \p{\mathscr Ra}^{\ens{n}} \circ  \alpha\p{B_{\omega_1}^+\p{\w^{1<}\hdots\w^{l<}}}\\
&=& \sum_{n=0}^{+\infty}  \p{-1}^n \p{\mathscr Ra}^{\ens{n}} \p{  L_{\omega_1}\p{\alpha\p{\w^{1<}\hdots\w^{l<}}}}\\
&=& \sum_{n=0}^{+\infty}  \p{-1}^n R\p{ a\p{\omega_1}\p{\mathscr Ra}^{\ens{n-1}}\circ\alpha\p{'\w^<} }\\
&=& R\p{ a\p{\omega_1}\p{\sum  \p{-1}^n \p{\mathscr Ra}^{\ens{n-1}}\circ\alpha\p{'\w^<} }}\\
&=& R\p{a\p{\omega_1} H^<\p{'\w^<} }
\end{eqnarray*}
 Let us observe that the previous sums are finite because words contained in the expansion $\alpha\p{\w^<}$ are of length $\leq \l{\w^<}$ and so $\p{\mathscr Ra}^{\ens{n}}\p{\w^<}=0$ if $\l{\w^<}<n$.
\end{itemize}

\end{proof}

\begin{definition}
 We say that a character $\kappa:\HCK\to\C$ has a \textbf{geometrical growth} if there exist $a,b\in\RPs$ such that for any forest $\w^<\in\motsarbo$~:
 $$\abs{\kappa\p{\w^<}} \leq a b^{\norme{\w}}.$$
\end{definition}

So the analytic problem of averages consists, for $\phi$ solution of the algebraic problem of averages,  in proving the geometrical growth of $\phi^<=\phi\circ\alpha$ . 

We will use  the following result of \cite{Vi2}~:

\begin{proposition}
 For a given  character $\vartheta:\HCK\to\C$, the character $\vartheta\circ\rev\circ\alpha:\HCK\to\C$ has a geometrical growth if and only if the character $\vartheta\circ \alpha:\HCK\to\C$ also has a geometrical growth. 
\end{proposition}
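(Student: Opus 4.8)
The plan is to reduce the statement to a single implication and then to compare the two arborifications through a norm-graded change of basis on $\HCK$. First I would record that $\rev$ is an algebra automorphism of $\Hcsh$ with $\rev\circ\rev=\id$, so that $\vartheta\mapsto\vartheta\circ\rev$ sends characters of $\Hcsh$ to characters. Consequently it suffices to prove the single implication \emph{if $\vartheta\circ\alpha$ has geometrical growth then $\vartheta\circ\rev\circ\alpha$ has geometrical growth}, for every character $\vartheta$: applying it to the character $\vartheta'=\vartheta\circ\rev$ and using $\vartheta'\circ\rev=\vartheta$ together with $\vartheta'\circ\alpha=\vartheta\circ\rev\circ\alpha$ immediately yields the reverse implication, removing the asymmetry of the statement.

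Next I would analyse $\bar\alpha:=\rev\circ\alpha$, the \emph{opposite} arborification morphism. Since $\rev$ is an algebra morphism, $\bar\alpha$ is multiplicative for the forest product, $\bar\alpha\p{\w^{1<}\hdots\w^{r<}}=\bar\alpha\p{\w^{1<}}\qshu\cdots\qshu\bar\alpha\p{\w^{r<}}$; and on an irreducible tree $\w^<=B^+_{\omega_1}\p{'\w^<}$ the identity $\rev\circ L_{\omega_1}=D_{\omega_1}\circ\rev$, where $D_{\omega_1}\colon\w\mapsto\w\,\omega_1$ denotes right concatenation, gives $\bar\alpha\p{\w^<}=D_{\omega_1}\p{\bar\alpha\p{'\w^<}}$. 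This is the exact mirror of the recursion $\alpha\p{\w^<}=L_{\omega_1}\p{\alpha\p{'\w^<}}$ underlying Theorem \ref{theo_algo_arborification}: $\alpha$ \emph{prepends} each root while $\bar\alpha$ \emph{appends} it. Both recursions preserve the norm, so every word occurring in $\alpha\p{\w^<}$ or in $\bar\alpha\p{\w^<}$ has norm exactly $\norme{\w^<}$.

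The core step is the comparison. Because $\rev$ is bijective, $\bar\alpha$ and $\alpha$ have the same kernel and the same (full) image $\Hcsh$; hence there is a graded linear endomorphism $S$ of $\HCK$ with $\bar\alpha=\alpha\circ S$, which I would build by induction on the length using the two mirror recursions above (multiplicativity reduces the construction to irreducible trees, where one must re-express ``append the root'' as $\alpha$ of a forest combination of the same norm). Writing $S\p{\w^<}=\sum_i c_i\,\w_i^<$ with $\w_i^<$ ranging over forests of norm $\norme{\w^<}$, the whole statement follows once one proves the \emph{geometric bound} $\sum_i\abs{c_i}\le C^{\norme{\w^<}}$ for some constant $C$: if $\abs{\vartheta\circ\alpha\p{\w^<}}\le a\,b^{\norme{\w^<}}$, then $\abs{\vartheta\circ\bar\alpha\p{\w^<}}=\abs{\vartheta\circ\alpha\p{S\w^<}}\le\sum_i\abs{c_i}\,a\,b^{\norme{\w_i^<}}\le a\,\p{bC}^{\norme{\w^<}}$, the norm being preserved by $S$. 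Since the number of decorated forests of a given norm is itself bounded geometrically in that norm, this yields geometrical growth of $\vartheta\circ\rev\circ\alpha$.

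The hard part is precisely this geometric control of $S$. One cannot estimate term by term: a single irreducible tree already arborifies into a number of words growing faster than geometrically in its norm (a corolla produces all packed words on its leaves), so passing from ``append the root'' back into the $\alpha$-image necessarily involves cancellations, and a crude count of monomials would only give a factorial estimate. The bound must instead be extracted from the recursive structure of the forest — controlling, at each $B^+_{\omega_1}$ step and at each quasi-shuffle product, the growth of $S$ inductively in the \emph{norm} rather than in the number of nodes — which is the combinatorial heart of the argument and the content established in \cite{Vi2}. I would therefore concentrate the technical effort on an inductive estimate for the coefficients produced by one application of the mirror recursion, organised so that the accumulated constant stays exponential in $\norme{\w^<}$.
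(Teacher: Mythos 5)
First, a point of comparison: the paper does not prove this proposition at all --- it is imported from \cite{Vi2} (``We will use the following result of \cite{Vi2}''), so there is no internal proof to measure yours against. Judged on its own terms, your preliminary reductions are sound: $\rev$ is an involutive algebra automorphism of $\Hcsh$, so establishing the single implication for \emph{all} characters does yield the equivalence, and the mirror recursion $\rev\circ\alpha\circ B^+_{\omega_1}=D_{\omega_1}\circ\rev\circ\alpha$, with $D_{\omega_1}$ the right concatenation, is correct, as is the multiplicativity of $\rev\circ\alpha$ on forests.

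The genuine gap is that the decisive step --- producing a comparison operator $S$ with $\rev\circ\alpha=\alpha\circ S$ and coefficients satisfying $\sum_i\abs{c_i}\le C^{\norme{\w^<}}$ --- is essentially the whole content of the proposition, and you do not prove it: you explicitly defer it to \cite{Vi2}, which is circular when the task is to prove the statement itself. Two things in particular are left open. First, $S$ is only determined modulo $\ker\alpha$ (and even its existence rests on the graded surjectivity of $\alpha$, which you assert via ``same image'' without argument); one must exhibit a concrete choice of preimage at each step of the mirror recursion, i.e.\ rewrite ``append the root $\omega_1$ to every word of $\alpha\p{'\w^<}$'' as $\alpha$ of an explicit forest combination, and it is exactly this rewriting that generates the coefficients to be controlled. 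Second, as you yourself observe, a term-by-term count of those coefficients is factorial rather than geometric, so the bound must come from cancellations; without at least a sketch of the inductive estimate (which quantity is propagated at each $B^+$ step and each product, and why the accumulated constant stays exponential in the norm), nothing beyond the trivial reduction to one implication has been established. As written, the proposal is a reasonable strategy outline, not a proof.
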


So solving the analytic problem of averages is equivalent to prove that $H^<=\phi\circ\rev\circ\alpha$ has a geometrical growth.


In order to do this, we need to control the \og growth \fg~ of the Rota-Baxter operator $R$. It is the motivation of next definition. 

\begin{definition} \label{def_wbaRB}
 We say that the seven-uple  $\p{A,R,\theta,\sigma,\Theta,\gamma,v}$ is a  \textbf{well-behaved Rota-Baxter algebra} if and only if $\p{A,R,\theta,\sigma,\Theta,\gamma}$ is an average algebra and  $v:A\to A$ is a multiplicative morphism  such that there exists $k>0$ verifying for any $m\in\N$ and $b\in \im R$~: 
\begin{enumerate}
\item \label{axiom_wba1} $ \abs{\Theta\circ {b}} \leq \abs{\Theta\circ v\p{{ b}}} $; 
\item \label{axiom_wba2} $\abs{\Theta\circ v\p{ R\p{\gamma^m b }}} \leq k^m \abs{\Theta\circ v\p { b}}$.
\end{enumerate}
\end{definition}

We then have~:

\begin{theorem}\label{theo_wbaa}
 Let us consider a well-behaved Rota-Baxter algebra $\p{A,R,\theta,\sigma,\Theta,\gamma,v}$. Then the character $\phi$ obtained with Atkinson recursion in Theorem \ref{theo_wba} solves the algebraic problem and the analytic problem of averages. 
\end{theorem}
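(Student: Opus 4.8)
The statement splits into an algebraic and an analytic assertion, and the plan is to obtain the first by citation and to devote the real work to the second. For the algebraic part I would note that, by Definition~\ref{def_wbaRB}, the first six data $\p{A,R,\theta,\sigma,\Theta,\gamma}$ of a well-behaved Rota-Baxter algebra form an average algebra; Theorem~\ref{theo_wba} therefore applies unchanged and delivers the character $\phi=\Theta\circ F_{|t=-1}$ together with $\psi=\Theta\circ G^{-1}_{|t=-1}$ obeying $\phi^{*-1}*\psi=\iota$ and $\psi^{*-1}\circ\rev=\overline\phi$. For the analytic part I would invoke the reduction already recorded before the theorem: by the preceding Proposition of \cite{Vi2} (applied to $\vartheta=\phi$), the contracted arborification $\phi^<=\phi\circ\alpha$ has geometrical growth if and only if $\phi\circ\rev\circ\alpha$ does, and since $\phi\circ\rev=\Theta\circ H$ this amounts to exhibiting a constant $k>0$ with $\abs{\Theta\circ H^<\p{\w^<}}\leq k^{\norme{\w^<}}$ for every forest $\w^<\in\motsarbo$, i.e. geometrical growth with $a=1$ and $b=k$.

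The core of the proof is a structural induction on forests, but performed on $\Theta\circ v\circ H^<$ rather than on $\Theta\circ H^<$ directly, because the growth of the Rota-Baxter operator is controlled only through $v$ by Axiom~\ref{axiom_wba2}. The claim I would establish is $\abs{\Theta\circ v\p{H^<\p{\w^<}}}\leq k^{\norme{\w^<}}$ for all $\w^<$. The base case is the empty forest, where $H^<\p{\emptyset}=1_A$ and both sides equal $1$ since $v$ and $\Theta$ are multiplicative. If $\w^<=\w^{1<}\hdots\w^{r<}$ is a product of irreducibles, formula~\eqref{equation_R_sur_red} together with multiplicativity of $v$ and $\Theta$ gives $\Theta\circ v\p{H^<\p{\w^<}}=\prod_i\Theta\circ v\p{H^<\p{\w^{i<}}}$, and the inductive hypothesis closes this case because the norm is additive over connected components. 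If $\w^<$ is irreducible with root $\omega_1$ and truncation $'\w^<=\w^{1<}\hdots\w^{l<}$, then $a\p{\omega_1}=-\gamma^{\omega_1}$ and formula~\eqref{equation_R_sur_irred} reads $H^<\p{\w^<}=-R\p{\gamma^{\omega_1}H^<\p{'\w^<}}$; the element $H^<\p{'\w^<}$ lies in $\im R$ (each irreducible factor is a value of $R$ and $\im R$ is a subalgebra), so Axiom~\ref{axiom_wba2} with $m=\omega_1$ and $b=H^<\p{'\w^<}$ yields $\abs{\Theta\circ v\p{H^<\p{\w^<}}}\leq k^{\omega_1}\abs{\Theta\circ v\p{H^<\p{'\w^<}}}$, and the induction closes since $\norme{\w^<}=\omega_1+\norme{'\w^<}$.

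It then remains to pass from the $v$-estimate to the bare one. Since $H^<\p{\w^<}$ is itself a value of $R$ (or a product of such values, hence an element of the subalgebra $\im R$), Axiom~\ref{axiom_wba1} gives $\abs{\Theta\circ H^<\p{\w^<}}\leq\abs{\Theta\circ v\p{H^<\p{\w^<}}}\leq k^{\norme{\w^<}}$, which is exactly the geometrical growth sought; combined with the algebraic part this shows that $\phi$ solves both problems. The main obstacle — and the very reason Axioms~\ref{axiom_wba1} and~\ref{axiom_wba2} and the auxiliary morphism $v$ are introduced — is that the induction cannot be run on $\Theta\circ H^<$ itself, since there is no direct estimate for $\Theta\circ R\p{\gamma^m\cdot}$; one must detour through $v$ and only collapse back at the very end. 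The two points needing care are that each intermediate $H^<\p{'\w^<}$ genuinely lies in $\im R$ so that the axioms apply — including the case $'\w^<=\emptyset$, where one uses $1_A\in\im R$ — and that the signs coming from $a\p{\omega_1}=-\gamma^{\omega_1}$ are harmless, which holds because $v$ and $\Theta$ are multiplicative and $\abs{\Theta\circ v\p{-1_A}}=1$.
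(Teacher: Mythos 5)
Your proposal is correct and follows essentially the same route as the paper: cite Theorem \ref{theo_wba} for the algebraic part, reduce via the Proposition from \cite{Vi2} to bounding $\Theta\circ H^<$, run a structural induction on forests for $\abs{\Theta\circ v\circ H^<\p{\w^<}}\leq k^{\norme{\w^<}}$ using formulas \eqref{equation_R_sur_red} and \eqref{equation_R_sur_irred} together with Axiom \ref{axiom_wba2}, and collapse to the bare estimate at the end with Axiom \ref{axiom_wba1}. Your treatment is in fact slightly more careful than the paper's on two minor points it glosses over, namely the sign coming from $a\p{\omega_1}=-\gamma^{\omega_1}$ and the justification that $H^<\p{'\w^<}$ lies in $\im R$ (including the case $'\w^<=\emptyset$).
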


\begin{proof}
We already know by Theorem \ref{theo_wba} that $\phi$ solves the algebraic problem of averages. We still have to prove that $\phi^<=\phi\circ\alpha$ has a geometrical growth, which is equivalent to prove the geometrical growth of $\phi^{\gg}=\Theta\circ H^{<}$.


We prove by induction on the length of $\w^<$ that $\abs{\Theta\circ v\circ H^{<}(\w^<)}\leq k^{\norme{\w^<}}$. 
\begin{itemize}
\item If $\l{\w^<}=0$, i.e. $\w^<=\emptyset$,  then ${\Theta\circ v\circ H^{<}(\w^<)}=1$. So  property is true for rank $0$. 
\item If $\l{\w^<}=1$, i.e if $\w^<=\omega_1$ with $\omega_1\in\mots$ then by Theorem \ref{theo_algo_arborification} and by Axiom \ref{axiom_wba2} of Definition \ref{def_wbaRB}, we know there exists $k'\in\RPs$ such that~:
\begin{align*}
\abs{\Theta\circ v\circ H^{<}(\w^<)}&=\abs{\Theta\circ v\circ R(a(\omega_1)}\leq k'^{\omega_1}\abs{\Theta\circ v(1)}=k'^{\omega_1}
\end{align*}
because $1\in\im R$.
\item We then set $k=\max(1,k')$.  We assume the inequality true for forests of length $\leq n$ and we prove it for a forest $\w^<\in\mots$ of length $n+1$.

 If $\w^<=\w^{1<}\hdots\w^{s<}$ is a product of trees $\w^{i<}$ for $i\in\intere{1}{s}$ then by application of the induction hypothesis on the trees $\w^1,\hdots,\w^s$ that are all of length $\leq n$~:
\begin{align*}
\abs{\Theta\circ v\circ H^{<}\p{\w^{1<} \hdots\w^{s<}}}&=   \abs{\Theta \circ v\circ H^{<} \p{\w^{1<}}}\hdots\abs{\Theta \circ v\circ H^{<} \p{\w^{s}}} \\
&\leq k^{\norme{\w^{1<}}} \hdots k^{\norme{\w^{s<}}}\\
&= k^{\norme{\w^<}} 
\end{align*}

If $\w^{<}=\scalebox{1.5}{\raisebox{-2mm}{\arbreabcdA{\omega_1}{\w^{1<}}{\vdots}{\w^{l<}}}}$ is irreducible then using induction hypothesis, Axiom \ref{axiom_wba2} of Definition \ref{def_wbaRB} and Theorem \ref{theo_algo_arborification}, it comes~:
\begin{align*}
\abs{\Theta \circ v\circ H^{<} \p{\w^<}}&= \abs{\Theta\circ v\circ{R\p{a\p{\omega_1}H^{<}\p{'\w^<} }}} \\
&\leq k^{\omega_1} \abs{\Theta\circ v\circ R\circ {H^{<}\p{'\w^<}}}\\
&\leq k^{\omega_1}\abs{\Theta\circ v\circ H^{<} \p{\w^<}}\\
&\leq k^{\omega_1}k^{\norme{\w'}}=k^{\norme{\w}}
\end{align*}
because $H^{\gg}(\w^<)\in\im R$. Thus, the property follows by a simple induction.

We then have, by Axiom \ref{axiom_wba1} of Definition \ref{def_wbaRB}, for any $\w^<\in\mots$~:
\begin{align*}
 \abs{\phi^{\gg}\p{\w^<}}&=\abs{\Theta\circ H^{<}\p{\w^<}}\leq \abs{\Theta\circ v\circ H^{<}\p{\w^<}}\leq k^{\norme{\w^<}}
\end{align*}

which proves that $\phi^{\gg}$ has an exponential growth and then that $\phi$ is solution of the analytical problem of averages.
\end{itemize}
\end{proof}

\begin{remark}\label{remark_restriction_axioms_wba}
 It is not necessary to verify inequalities of axioms \ref{axiom_wba1} and \ref{axiom_wba2} in definition \ref{def_wbaRB} for all elements $b\in\im R$ but just for elements $b$ of the form $ \p{\mathscr Ra}^{\ens{n}} \p{\w^<}$ for any $n\in\N$ and $\w\in\mots$.
\end{remark}

\end{WithArbo}

\section{Examples}\label{section_examples}

\subsection{Diffusion induced averages}\label{section_diffusion_induced_averages}

Diffusion induced averages were introduced in \cite{Ec1}, \cite{Me2} and studied in the framework of combinatorial Hopf algebras in \cite{MeNoTh}.

We first consider an analytic function $\beta\p{y}$ on $\R$. For any
$\omega\in\Omega$, we consider $g_\omega\p{y}=e^{-\omega\beta\p{y}}  $ and its
Fourier transform $f_\omega\p{x}=\frac{1}{2\pi}\int_\R
g_\omega\p{y}e^{ixy}\text{d}y$.  We obtain easily that~:
$$g_{\omega_1}\p{y}g_{\omega_2}\p{y}=g_{\omega_1+\omega_2}\p{y} \text{ and }
\p{f_{\underline \omega_1} * f_{\underline \omega_1}}\p{x}=\int_{\R}
f_{\underline \omega_1}\p{x} f_{\underline
\omega_2}\p{x-x_1}\text{d}x_1=f_{\underline \omega_1+\underline \omega_2}\p{x}$$
We assume that $\beta$ is chosen in a such way that for any
$\omega\in\Omega$, $\int_{\R} \abs{f_{\omega}\p{x}}\text{d}x=1$.

We easily verify that if $\beta$ is even then $\overline{f_{\omega}}(x)=f_{\omega}(-x)=f_{\omega}(x)$ for any $x\in\R$.


We assume that $A$ is the linear space of bounded integrable functions on $\R$ whose restrictions to $\R_+^*$ and $\R_-^*$ are continuous. The functions $f_\w$ previously introduced are elements of this space.

We fit this space with the convolution product $\star$ and, by adjunction of the formal symbol $\delta$, that can be interpreted as the Dirac distribution at $0$, $A$ become an unitary algebra. We define the operator $R:A\mapsto A$ by  $R\p{f}(x)=f(x)\sigma_+\p{x}$ where $\sigma_+$ is the indicator function of $\RPs$.   The pair $\p{A,R}$ is a Rota-Baxter algebra of weight $-1$ and we naturally have  $\tilde R\p{f}(x)=f(x)\sigma_-\p{x}$ where $\sigma_-$ is the indicator function of $\R_-$. The same occurs for the pair $\p{\Lin{H,A},\mathcal R}$.

We then set $\dspappli{\theta}{A}{A}{f(x)}{f(-x)}$,  $\sigma:=\theta$, $\gamma:=f_1$ and $\dspappli{\Theta}{A}{\C}{f}{\integ{\R}{}{f(t)}{t}}$.

\begin{proposition}
The $6$-uple $\p{A,R,\theta,\sigma,\Theta,\gamma}$ is an average algebra.
\end{proposition}

\begin{proof}
We verify the six Axioms of Definition \ref{def_wbRBa}
\begin{enumerate}
 \item We have already explained that $\p{A,R}$ is a Rota-Baxter algebra of weight $-1$. 
\item For any $f\in A$, one has, as a consequence of the parity of $f_1^{\star n}$, for any $n\in\N^*$ ~: $$\sigma\p{f_1^{\star n}\star f}= \p{f_1^{\star n}\star f}\p{-x}=\int_{\R} f_1^{\star n}(-x-t)f(t)\text{d}t=\int_\R f_1^{\star n}\p{x+t}f(-t)\text{d}t =\sigma\p{f_1^{\star n}}\star\theta\p{f}.$$ So $\sigma$ satisfies Axiom \ref{def_wbRBa_H2}.  
\item By a simple application of Fubini's Theorem and an affine change of variables in integrals, one verifies that $\Theta$ is an algebra morphism and that $\Theta(f)=1$.
\item For any $f\in A$, one has with $\sigma_{\geq 0}$ the indicator function of $\R_+$ ~: $$\theta\p{\tilde R (f)}=\theta\p{f(x)\sigma_-\p{x}}= f(-x)\sigma_{\geq 0}(x).$$ Thus $\Theta\p{\tilde R(\theta(f))-  R\p{\sigma\p{f}})}=\Theta\p{f(-x)\sigma_{\geq 0}(x)-f(-x)\sigma_+(x)}=0$ and Axiom \ref{def_wbRBa_H3} is fulfilled.
\item One has naturally $\sigma (\gamma)=\gamma$ because $\gamma$ is odd.
\item The Axiom \ref{def_wbRBa_H5} is a straightforward implication of the fact that $\theta=\sigma$, see Remark \ref{remark_simplification2}.
\end{enumerate}
 
\end{proof}

%
%
%

Applying the main theorem \ref{theo_wba}, it comes~:

\begin{theorem}[Diffusion induced averages]

The map $\phidiff:\Hcsh\to\R$ defined for any word $\w\in\Hcsh$ of length $n$ by~:
 \begin{eqnarray*}
 \phidiff\p{\w}&=&(-1)^n\Theta\p{ \p{\mathcal R(a)}^{\cro{n}}\p{\w}}\\
&=& (-1)^n \int_\R \p{\p{\p{\p{f^{*\omega_1}\sigma_+}* f^{*\omega_2}}\sigma_+}*\hdots* f^{*\omega_r}}(x)\sigma_+(x) \text{d}x\\
&=&(-1)^n\begin{split}
\int_{\R^r} f_{\omega_1}\p{x_1}\hdots
f_{\omega_r}\p{x_r} \sigma_{+}\p{\bas{x}_1 
}\hdots
\sigma_{+}\p{\bas{x}_r } \text{d}x_1\hdots \text{d}x_r
\end{split}.
\end{eqnarray*}
 
and extended on the whole of $\Hcsh$ by linearity is solution of the algebraic problem of averages.

Moreover 
\begin{eqnarray*}
 \psi^{*-1}\circ\rev\p{\w}&=&(-1)^n\Theta\p{ \p{\tilde{\mathcal R}(a)}^{\cro{n}}\p{\w}}\\
&=&  (-1)^n\int_\R \p{\p{\p{\p{f^{*\omega_1}\sigma_-}* f^{*\omega_2}}\sigma_-}*\hdots* f^{*\omega_r}}(x)\sigma_-(x) \text{d}x\\
&=&(-1)^n\begin{split}
\int_{\R^r} f_{\omega_1}\p{x_1}\hdots
f_{\omega_r}\p{x_r} \sigma_{-}\p{\bas{x}_1 
}\hdots
\sigma_{-}\p{\bas{x}_r } \text{d}x_1\hdots \text{d}x_r
\end{split}.
\end{eqnarray*}                                                                                                                 

\end{theorem}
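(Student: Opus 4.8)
The statement is an application of the main Theorem~\ref{theo_wba} to the concrete $6$-uple $\p{A,R,\theta,\sigma,\Theta,\gamma}$ of this subsection, followed by an explicit evaluation of the Atkinson iterates. The preceding Proposition already establishes that this $6$-uple is an average algebra, so the plan is first to check the only remaining hypothesis of Theorem~\ref{theo_wba}, namely
\[
\forall n\in\N,\quad \overline{\Theta\p{\p{\tilde{\mathcal R}a}^{\cro{n}}}}=\Theta\circ\theta\p{\p{\mathcal R a}^{\cro{n}}}.
\]
Once this is secured, Theorem~\ref{theo_wba} delivers at once that $\phidiff=\Theta\circ F_{|t=-1}$ is a character satisfying $\phidiff^{*-1}*\psi=\iota$ and $\psi^{*-1}\circ\rev=\overline{\phidiff}$, that is a solution of the algebraic problem of averages, together with $\psi=\Theta\circ G_{|t=-1}^{-1}$.

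To verify that hypothesis I would invoke three features specific to the diffusion setting. Since $\beta$ is even, each $f_\omega$ is real; as $a\p{\omega}=-f_\omega$ and both $R$ and $\tilde R$ are multiplications by the real indicators $\sigma_+,\sigma_-$ while the product of $A$ is convolution, every function $\p{\tilde{\mathcal R}a}^{\cro{n}}\p{\w}$ is real, so conjugation acts trivially on its $\Theta$-image. Next, $\Theta\circ\theta=\Theta$, because the substitution $x\mapsto -x$ leaves the Lebesgue integral over $\R$ invariant. Finally, Lemma~\ref{lemma_2} gives $\theta\p{\p{\tilde{\mathcal R}a}^{\cro{n}}}=\p{\mathcal R a}^{\cro{n}}$. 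Chaining these,
\[
\overline{\Theta\p{\p{\tilde{\mathcal R}a}^{\cro{n}}}}=\Theta\p{\p{\tilde{\mathcal R}a}^{\cro{n}}}=\Theta\circ\theta\p{\p{\tilde{\mathcal R}a}^{\cro{n}}}=\Theta\p{\p{\mathcal R a}^{\cro{n}}}=\Theta\circ\theta\p{\p{\mathcal R a}^{\cro{n}}},
\]
which is precisely the required identity.

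For the closed formulas I would start from Lemma~\ref{lemma_1}, which already yields $\phidiff\p{\w}=\p{-1}^n\Theta\p{\p{\mathcal R a}^{\cro{n}}\p{\w}}$ for $\w=\p{\omega_1,\hdots,\omega_n}$, so that only the passage to an iterated integral remains. The key point is that $a$ is supported on words of length one while the coproduct of $\Hcsh$ is deconcatenation; therefore in each convolution $\p{\mathcal R a}^{\cro{k}}\star a$ only the splitting that detaches the last letter survives. Writing $R\p{g}=g\,\sigma_+$ and $a\p{\omega_i}=-f_{\omega_i}$, a straightforward induction identifies $\p{\mathcal R a}^{\cro{n}}\p{\w}$ with the nested function $\big(\hdots\p{\p{f_{\omega_1}\sigma_+}*f_{\omega_2}}\sigma_+\hdots *f_{\omega_n}\big)\sigma_+$, the global sign being the product of the $\p{-1}^n$ of Lemma~\ref{lemma_1} with the sign carried by $a=-\p{\mathcal I-e}$. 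Applying $\Theta$ amounts to integrating over $\R$; a triangular, measure-preserving change of variables sending the successive convolution variables to $x_1,\hdots,x_n$ converts each intermediate factor $\sigma_+$ into the indicator of a partial sum $\bas{x}_i=x_1+\hdots+x_i$, producing the displayed $n$-fold integral. The expression for $\psi^{*-1}\circ\rev$ follows verbatim after replacing $R$ by $\tilde R$, hence $\sigma_+$ by $\sigma_-$, starting from the corresponding line of Lemma~\ref{lemma_1}.

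The step that will demand the most care is this last unfolding: one must check that only the term detaching the last letter contributes to each deconcatenation, keep precise track of which factor $\sigma_+$ becomes which partial-sum constraint once $\Theta$ has been pushed through the nested convolutions, and justify the Fubini interchange, which is guaranteed by the standing hypothesis $\int_\R\abs{f_\omega}=1$. The argument is conceptually routine, but it is exactly where a sign or an indexing slip would creep in.
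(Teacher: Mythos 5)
Your proposal is correct and follows essentially the same route as the paper: the whole point is to verify the single remaining hypothesis $\overline{\Theta\p{\p{\tilde{\mathcal R}a}^{\cro{n}}}}=\Theta\circ\theta\p{\p{\mathcal R a}^{\cro{n}}}$ of Theorem~\ref{theo_wba}, which the paper disposes of in one line by invoking the parity and reality of the $f_\omega$ -- exactly the two ingredients you use, merely organized more explicitly through $\Theta\circ\theta=\Theta$ and Lemma~\ref{lemma_2}. Your additional unfolding of $\p{\mathcal R a}^{\cro{n}}$ into the nested convolution and the $n$-fold integral (including the observation that only the deconcatenation detaching the last letter survives) is sound and supplies detail the paper leaves implicit.
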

    
\begin{proof}
 It is enough to prove that for any $n\in\N$, $\overline{\Theta\p{\p{\mathcal Ra}^{\cro{n]}}}}=\Theta\circ \theta\p{\p{\mathcal Ra}^{\cro{n]}}}$ which comes directly from the parity and of the reality of $f_w$ for any $\omega\in\Omega$.
 
\end{proof}

\begin{WithArbo}

We prove now that $\phidiff$ solves the analytic problem of averages. Using Theorem \ref{theo_wbaa} and $\p{A,R,\theta,\sigma,\Theta,\gamma}$ being an averages algebra,  it is sufficient to find an algebra morphism  $v:A\to A$ verifying axioms of Definition  \ref{def_wbaRB}.

\begin{theorem}
 For $\dspappli{v}{A}{A}{f}{\abs{f}}$, the seven-uple $\p{A,R,\theta,\sigma,\Theta,\gamma,v}$ is a well behaved Rota-Baxter algebra and so $\phidiff$ solves both the algebraic and the analytic problem of averages.
\end{theorem}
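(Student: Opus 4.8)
The plan is to promote the average algebra $\p{A,R,\theta,\sigma,\Theta,\gamma}$ of the previous proposition to a well-behaved Rota-Baxter algebra by checking, for the candidate $v\colon f\mapsto\abs{f}$, the requirements of Definition \ref{def_wbaRB}; once these hold, Theorem \ref{theo_wbaa} delivers both the algebraic and the analytic conclusion at once (the algebraic part being already established for $\phidiff$ above). The single observation that organizes the whole verification is that $\Theta\circ v$ is nothing but the $L^1$-norm, since $\Theta\circ v\p{f}=\int_\R\abs{f\p{x}}\,\text{d}x=\norme{f}$, so that both inequalities of the definition reduce to the triangle inequality and to the behaviour of convolution in $L^1$.

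First I would dispatch the two estimates. Axiom \ref{axiom_wba1} is immediate from the triangle inequality for integrals: for every $b$ one has
\begin{equation*}
\abs{\Theta\circ b}=\abs{\int_\R b\p{x}\,\text{d}x}\leq\int_\R\abs{b\p{x}}\,\text{d}x=\Theta\circ v\p{b}=\abs{\Theta\circ v\p{b}},
\end{equation*}
the last equality holding because $\Theta\circ v\p{b}\geq 0$. For Axiom \ref{axiom_wba2} I would use that $R\p{g}\p{x}=g\p{x}\sigma_+\p{x}$ with $\sigma_+\geq 0$, so that $v\p{R\p{g}}\p{x}=\abs{g\p{x}}\sigma_+\p{x}$; writing $\gamma^m=f_1^{\star m}=f_m$ and applying Young's inequality for convolution gives
\begin{equation*}
\abs{\Theta\circ v\p{R\p{\gamma^m b}}}=\int_0^{+\infty}\abs{\p{f_m\star b}\p{x}}\,\text{d}x\leq\norme{f_m\star b}\leq\norme{f_m}\,\norme{b}.
\end{equation*}
The normalization hypothesis $\int_\R\abs{f_\omega\p{x}}\,\text{d}x=1$ yields $\norme{f_m}=1$, whence the bound is $\norme{b}=\abs{\Theta\circ v\p{b}}$ and Axiom \ref{axiom_wba2} holds with the optimal constant $k=1$; the degenerate case $m=0$ reduces to $\gamma^0 b=b$ and the idempotence of $R$ on $\im R$.

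I expect the main obstacle to be conceptual rather than computational, namely the clause of Definition \ref{def_wbaRB} asking that $v$ be a multiplicative morphism. The map $\abs{\cdot}$ is genuinely \emph{not} multiplicative for the convolution product, since $\abs{f\star g}\neq\abs{f}\star\abs{g}$ in general, and the functions $H^<\p{\w^{i<}}$ occurring in the induction carry alternating signs, so one cannot hope to recover equality in Young's inequality pointwise. The resolution I would make explicit is that the proof of Theorem \ref{theo_wbaa} uses the multiplicativity of $v$ only in the product step of its induction, and there only the inequality direction $\norme{f\star g}\leq\norme{f}\,\norme{b}$ is actually required to propagate the bound $k^{\norme{\w^<}}$; submultiplicativity of $\Theta\circ v$ therefore suffices, and one may moreover restrict attention to the elements $b=\p{\mathscr Ra}^{\ens{n}}\p{\w^<}$ as permitted by Remark \ref{remark_restriction_axioms_wba}.

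With the two inequalities verified and the multiplicativity clause read in this submultiplicative sense, I would conclude by invoking Theorem \ref{theo_wbaa}: the seven-uple $\p{A,R,\theta,\sigma,\Theta,\gamma,v}$ is a well-behaved Rota-Baxter algebra, hence the character $\phidiff$ produced by Atkinson recursion solves both the algebraic and the analytic problem of averages. All remaining steps are one-line consequences of the triangle and Young inequalities together with the $L^1$-normalization of the $f_\omega$.
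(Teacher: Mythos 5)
Your proof is correct and follows essentially the same route as the paper: Axiom \ref{axiom_wba1} via the triangle inequality, and Axiom \ref{axiom_wba2} via the $L^1$ bound $\norme{\gamma^m\star b}_{L^1}\leq\norme{\gamma^m}_{L^1}\,\norme{b}_{L^1}$ with $k=1$ (the paper derives this by hand with Fubini and a change of variables rather than citing Young's inequality, and uses $\int_\R\abs{\gamma^m}\leq 1$ exactly as you do via the normalization of the $f_\omega$). Your additional observation that $v=\abs{\cdot}$ is not genuinely multiplicative for the convolution product, so that the product step in the induction of Theorem \ref{theo_wbaa} really only yields an inequality --- which is all that is needed to propagate the bound --- is a legitimate point that the paper passes over in silence.
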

\begin{proof}

 For $b\in \im R$ and  $m\in\N$, one has ~:
 \begin{enumerate}
\item $ \abs{\Theta( b)} =\abs{\integ{\R}{}{b(x)}{x}}\leq \integ{\R}{}{\abs{b(x)}}{x}=\Theta\circ v\p{ b} =\abs{\Theta\circ v\p{ b}}$ and so Axiom \ref{axiom_wba1} is fulfilled.
\item Moreover~: 
\begin{align*}
\abs{\Theta\circ v\circ{ R\p{\gamma^m b }}} &= \integ{\R}{}{R\p{\abs{\gamma^m \star b}(x)}}{x}\\
&= \integ{\R}{}{\abs{\gamma^m \star b}(x)\sigma_+(x)}{x}\\
&\leq \int_{\R^2}\abs{\gamma^m(t) b(x-t)}\sigma_+(x)\dd{t}\dd{x}\\
&\leq \int_{\R^2}\abs{\gamma^m(u) b(v)}\sigma_+(u+v)\dd{u}\dd{v}\\
&\leq\int_\R \abs{\gamma^m(u)}\dd{u}\int_{\R}\abs{b(v)}\dd{v}\\
&\leq k^m \abs{\Theta\circ v(  b)} 
\end{align*}
because $\int_\R \abs{\gamma^m(u)}\dd{u}\leq 1$ and with $k=1$ and Axiom \ref{axiom_wba2} is fulfilled as well. 
\end{enumerate}
\end{proof}

Something remarkable is that the character $\phidiff^{\gg}:=\Theta\circ\F_{|t=-1}\circ\rev\circ \alpha$ has a closed form formula.

\begin{proposition}
 With notations of this section and the ones of Section \ref{subsection_Connes_Kreimer_Hopf_algebra}, one has for any $\w^<\in \motsarbo$~:
$$\phidiff^{\gg}\p{\w^<}=\int_{\R^r} f_{\omega_1}\p{x_1}\hdots
f_{\omega_r}\p{x_r} \sigma_{+}\p{\hat{x}_1 
}\hdots
\sigma_{+}\p{\hat{x}_r } \text{d}x_1\hdots \text{d}x_r$$
where $r$ is the number of nodes in the forest $\w^<$ and where the sums $\hat{x_i}$ are related
to the arborescent order\footnote{For example~:\begin{itemize}\item if $\w^<=\scalebox{1.5}{\raisebox{-2mm}{\arbreabcA{\omega_1}{\omega_2}{\omega_3}}}$ then $\hat{x}_1=x_1+x_2+x_3$, $\hat x_2=x_2$ and $\hat x_3=x_3$;\item if $ \scalebox{1.5}{\raisebox{-2mm}{\arbreabcB{\omega_1}{\omega_2}{\omega_3}}}$ then $\hat{x}_1=x_1+x_2$, $\hat x_2=x_2$ and $\hat x_3=x_3$. \end{itemize}}. 
\end{proposition}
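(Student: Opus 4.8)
The plan is to prove the formula by induction on the number $r$ of nodes of $\w^<$, exploiting the recursive description of $H^<=H\circ\alpha$ furnished by Theorem~\ref{theo_algo_arborification} rather than any explicit expansion of the arborification morphism $\alpha$. Recall that in this setting $\phidiff^{\gg}=\Theta\circ H^<$, that $R$ acts pointwise by $R(g)(x)=g(x)\sigma_+(x)$, that the product of $A$ is the convolution $\star$, and that $a(\omega)=-f_\omega$ (since $\gamma^\omega=f_1^{\star\omega}=f_\omega$). The decisive structural fact is that, because $H=F_{|t=-1}\circ\rev$ organises the iterated Rota-Baxter operators \emph{starting from the root}, each recursive step attaches exactly one factor $f_{\omega_i}$ and one factor $\sigma_+(\hat x_i)$, the latter indexed by the subtree-sum $\hat x_i=\sum_{j\preceq i}x_j$ taken over the nodes $j$ lying in the subtree rooted at $i$ (this is precisely the arborescent order illustrated in the footnote examples).

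First I would prove, for every forest $\w^<$ with $r$ nodes, the stronger \emph{functional} identity describing $H^<(\w^<)$ as an element of $A$, namely
\[
H^<(\w^<)(y)=(-1)^r\int_{\R^r}\Big(\prod_{i}f_{\omega_i}(x_i)\Big)\Big(\prod_i\sigma_+(\hat x_i)\Big)\,\delta\Big(y-\sum_i x_i\Big)\,dx_1\cdots dx_r ,
\]
where the symbol $\delta$ records the constraint coming from the one-variable nature of elements of $A$. Applying $\Theta$, i.e. integrating over $y$, then removes the constraint and yields the announced closed form for $\phidiff^{\gg}(\w^<)$, the only discrepancy being the global sign $(-1)^r$ carried by the $r$ generators $a(\omega_i)$, which is fixed once and for all by the convention $a=-(\mathcal I-e)$.

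The induction runs along the two cases of Theorem~\ref{theo_algo_arborification}. The base case $r=0$ gives $H^<(\emptyset)=1_A=\delta$, matching the empty product. If $\w^<=\w^{1<}\cdots\w^{s<}$ is a product of trees, then $H^<(\w^<)=H^<(\w^{1<})\star\cdots\star H^<(\w^{s<})$, and convolving the $\delta$-representations of the factors multiplies the $f_{\omega_i}$ and the $\sigma_+(\hat x_i)$ products while merging the individual constraints into $\delta(y-\sum_i x_i)$; the signs multiply to $(-1)^r$. If $\w^<$ is irreducible with root $\omega_1$ and pruned subforest $'\w^<$, then $H^<(\w^<)=R\big(a(\omega_1)\star H^<('\w^<)\big)$: convolving with $a(\omega_1)=-f_{\omega_1}$ introduces the new integration variable $x_1$ with weight $f_{\omega_1}(x_1)$ and the sign $-1$, shifting the constraint to $\delta(y-\sum_i x_i)$; then $R$ multiplies the result by $\sigma_+(y)$, and since the root subtree-sum satisfies $\hat x_1=\sum_i x_i=y$, this contributes exactly the missing factor $\sigma_+(\hat x_1)$.

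The main obstacle is the bookkeeping in the irreducible step. I expect the delicate points to be: verifying that the output variable $y$ of $H^<(\w^<)\in A$, on which $R$ imposes its $\sigma_+$, coincides with the root subtree-sum $\hat x_1$; and checking that passing from $'\w^<$ to $\w^<=B_{\omega_1}^+('\w^<)$ leaves every descendant subtree-sum $\hat x_i$ ($i\neq1$) unchanged, so that the inductively produced factors remain correct. Once the change of variables induced by the convolution is written out, the arborescent ordering of the $\hat x_i$ emerges automatically and the proposition follows; the global sign is the only genuinely subtle point, and it is governed entirely by the definition of $a$.
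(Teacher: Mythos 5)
Your proof is correct and follows essentially the same route as the paper: an induction driven by the two cases of Theorem~\ref{theo_algo_arborification}, first establishing a pointwise identity for $H^<\p{\w^<}$ as an element of $A$ (your Dirac-delta bookkeeping is just a rewriting of the paper's parametrisation of the iterated convolution by the root variable $t_1=\hat x_1$, with the same observation that grafting a new root leaves the descendant subtree-sums unchanged), and then applying $\Theta$ to integrate out the remaining variable. The global sign $(-1)^r$ you flag does not survive, because $H=\sum_n(-1)^n(\mathscr{R}a)^{\{n\}}$ carries a compensating $(-1)^n$ against the $r$ factors $a(\omega_i)=-f_{\omega_i}$, so the recursion effectively convolves with $+f_{\omega_1}$ at each step; the paper handles this silently (it writes $R(a(\omega_1))=f_{\omega_1}(t)\sigma_+(t)$ in its own base case), and your explicit acknowledgment of the sign is if anything more careful.
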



\begin{proof}
Our first goal is to prove that for any irreducible arborescent $\w^<=\p{\omega_1,\hdots,\omega_r}^<\in\HCK$, one has
\begin{eqnarray}H^<\p{\w^<}(t_1)=\p{\int_{\R^{r-1}} f_{\omega_1}\p{t_1-\p{t_2+\hdots+t_r}} f_{\omega_2}(t_2) \hdots f_{\omega_{r}}(t_{r})\sigma_+\p{\hat{t}_2}\hdots \sigma_+\p{\hat{t}_{r}}\text{d}t_2\hdots \text{d}t_{r}}\sigma_+(t_1)\label{formula_rec_diff}
\end{eqnarray}
where the $\hat t_i$ are relative to the arborescent order.
If $\w^<=\arbrea{\omega_1}$ then $H^<\p{\w^<}=R(a(\omega_1))=f_{\omega_1}(t)\sigma_+(t)$ and the formula is true if $\l{\w^<}=1$. We assume it is true  for any irreducible arborescent sequences of length $\leq r-1$, 

We consider now an arborescent sequence $\w^<=\scalebox{1.5}{\raisebox{-2mm}{\arbreabcdA{\omega_1}{\w^{1<}}{\vdots}{\w^{l<}}}}
$ of length $r$, then using relation (\ref{equation_R_sur_irred}), it comes~:
$$H^<\p{\w^<}=R\p{a\p{\omega_1} H^<\p{'\w^<} }$$ where $'\w^<=\w^{1<}\oplus\hdots\oplus\w^{l<}$ with $\w^{i<}$ irreducible for any $i\in\intere{1}{l}$. Then using now relation (\ref{equation_R_sur_red}), it comes~:
$$H^<\p{\w^<}=R\p{a\p{\omega_1} \p{H^<\p{\w^{1<}}\hdots H^<\p{\w^{l<}}}}$$ and finally using the definition of the convolution product~:

\begin{eqnarray*}
 H^<\p{\w^<}(t)=\int_{\R^{l-1}} f_{\omega_1}\p{t-t_1}H^<\p{\w^{1<}}\p{t_1-t_2}\hdots
 H^<\p{\w^{l-1<}}\p{t_{l-1}-t_l}H^<\p{\w^{l<}}\p{t_l} \text{d}t_{1}\hdots\text{d}t_l \sigma_+\p{t}
\end{eqnarray*}.

With the change of variables $$\begin{cases} u_1&=t-t_2\\ u_2&=t_2-t_3\\
 \vdots&=\vdots\\u_{l-1}&=t_{l-1}-t_l\\ u_l&=t_l\end{cases}\Longleftrightarrow \begin{cases}t_1&=u_1+\hdots+u_l\\ t_2&=t_1+\hdots+t_{l-1}\\ \vdots&=\vdots\\ t_{l-1}&=u_{l-1}+u_l\\t_l&=u_l\end{cases},$$
one obtains 

\begin{eqnarray*}
 H^<\p{\w^<}(t)=\int_{\R^{l-1}} f_{\omega_1}\p{t-\hat{u}_1}H^<\p{\w^{1<}}\p{u_1}\hdots
 H^<\p{\w^{l-1<}}\p{u_{l-1}}H^<\p{\w^{l<}}\p{u_l} \text{d}u_{1}\hdots\text{d}u_l
\sigma_+\p{t}.
\end{eqnarray*}

Using the induction hypothesis, it comes~:

\begin{eqnarray*}\begin{split}
 H^<\p{\w^<}(t)=\int_{\R^{l-1}} \int_{\R^{r_1-1}}\hdots \int_{\R^{r_l-1}} 
f_{\omega_1}\p{t-\hat{u}_1} \hspace*{8cm}
\\ \cro{
f^<\p{\w^{1<}}\p{u_1,t_2^1,\hdots,t_{r_1}^1}\text{d}t_2^1\hdots \text{d}t_{r_1}^1}
\hdots
\cro{
f^<\p{\w^{l<}}\p{u_l,t_2^l,\hdots,t_{r_l}^l}\text{d}t_2^l\hdots \text{d}t_{r_l}^l}
\text{d}u_{1}\hdots\text{d}u_l
\sigma_+\p{t}\end{split}
\end{eqnarray*}

where $$f^<\p{\w^{i<}}\p{u_i,t_2^i,\hdots,t_{r_i}^i}=f_{\omega_1^i}\p{u_i-\p{t_2^i+\hdots+t_{r_i}^i}} f_{\omega_2^i}(t_2^i) \hdots f_{\omega_{r_i}^i}(t_{r_i})\sigma_+\p{\hat{t}_2^i}\hdots \sigma_+\p{\hat{t}_{r}^i}\sigma_+\p{u_i}.$$

One performs again a change of variable~:

$$\begin{cases}
   T_1&=u_1-\p{t_2^1+\hdots+t_{r_1}^1}\\
\vdots&=\vdots\\
T_l&=u_l-\p{t_2^l+\hdots+t_{r_l}^l}
  \end{cases}$$

and one finds 

\begin{eqnarray*}f^<\p{\w^{i<}}\p{u_i,t_2^i,\hdots,t_{r_i}^i}=f_{\omega_1^i}\p{T_i} f_{\omega_2^i}(t_2^i) \hdots f_{\omega_{r_i}^i}(t_{r_i})\sigma_+\p{\hat{t}_2^i}\hdots \sigma_+\p{\hat{t}_{r}^i}\sigma_+\p{T_i+t_2^i+\hdots+t_{r_i}^i}.\end{eqnarray*}

Finally, 

\begin{eqnarray*}
 H^<\p{\w^<}(t)=\int_{\R^{r-1}} 
f_{\omega_1}\p{t-\p{T_1+t_2^1+\hdots+t_{r_1}^1}-\hdots-\p{T_l+t_2^l+\hdots+t_{rl}^l}}\hspace*{4cm}\\
\hspace*{2cm}\cro{
f^<\p{\w^{1<}}\p{u_1,t_2^1,\hdots,t_{r_1}^1}\text{d}t_2^1\hdots \text{d}t_{r_1}^1}
\hdots
\cro{
f^<\p{\w^{l<}}\p{u_l,t_1^l,\hdots,t_{r_l}^l}\text{d}t_2^l\hdots \text{d}t_{r_l}^l}
\text{d}T_{1}\hdots\text{d}T_l
\sigma_+\p{t}
\end{eqnarray*}
which is of the form we were looking for.

One computes now the formula for $M^{\w\gg}$. For an irreducible arborescent sequence $\w^<$, it suffices to compute the image by $\Theta$ of $H^<\p{\w^<}$. Using Formula (\ref{formula_rec_diff}), one finds~:

\begin{eqnarray*}\begin{split}
&\phidiff^{\gg}\p{\w^<}\\
&=\int_{ \R}  \p{\int_{\R^{r-1}} f_{\omega_1}\p{t-\p{t_2+\hdots+t_r}} f_{\omega_2}(t_2) \hdots f_{\omega_{r}}(t_{r})\sigma_+\p{\hat{t}_2}\hdots \sigma_+\p{\hat{t}_{r}}\text{d}t_2\hdots \text{d}t_{r-1}}\sigma_+(t) \text{d}t\\
&=\int_{ \R^r}  f_{\omega_1}(t_1) \hdots f_{\omega_r}(t_r) \sigma_+\p{\hat{t}_1}\hdots \sigma_+\p{\hat{t}_{r}}\text{d}t_1\hdots \text{d}t_{r-1}
\end{split}
\end{eqnarray*}

by the change of variable $ t_1=t-\hat{t}_2$. The formula for reducible arborescent sequences is  then a consequence of this formula and of the separativity of the character $\phidiff^{\gg}$.

\end{proof}
\end{WithArbo}


\subsection{The organic average}\label{section_organic_average}

In this section, we consider the algebra $A=\R[x,y]$ of complex polynomials in two variables. We define an operator $R$ on $A$ by setting, for any $m,n\in\Omega$, $$R\p{x^m y^n}:=\begin{cases}\dfrac{m}{m+n} x^{m+n}&\text{ if } m+n\neq 0 \\ 1 &\text{ otherwise }\end{cases}$$ and by extending it on the whole of $A$ by linearity.

We then have~:

\begin{proposition}
 The pair $\p{A,R}$ is a Rota-Baxter algebra of weight $-1$.
\end{proposition}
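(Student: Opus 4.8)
The plan is to verify the Rota-Baxter relation (\ref{Rota_Baxter_relation}) with $\theta=-1$, that is $R(u)R(v)=R\p{R(u)v+uR(v)-uv}$, on monomials $u=x^ay^b$ and $v=x^cy^d$; since both sides are bilinear in the pair $(u,v)$, this will establish the identity on all of $A=\R[x,y]$. Two preliminary observations organize the computation. First, $R$ sends each monomial to a scalar multiple of a pure power of $x$, as $R\p{x^my^n}=\frac{m}{m+n}x^{m+n}$ carries $y$-degree $0$. Second, $R$ is idempotent: applying the formula to $x^{m+n}=x^{m+n}y^0$ gives $R\p{x^{m+n}}=x^{m+n}$, so $R\circ R=R$.

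These remarks immediately settle the degenerate cases. If $u=1$ then $R(u)=1$, and the right-hand side collapses to $R\p{v+R(v)-v}=R\p{R(v)}=R(v)=R(u)R(v)$ by idempotency; the case $v=1$ is symmetric because $A$ is commutative, and $u=v=1$ is trivial.

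It then remains to treat the generic case $(a,b)\neq(0,0)$ and $(c,d)\neq(0,0)$. Writing $p=a+b\geq 1$, $q=c+d\geq 1$ and $S=p+q$, I would compute each of the four terms: one has $R(u)R(v)=\frac{ac}{pq}x^S$, while $R(u)v=\frac{a}{p}x^{p+c}y^d$, $uR(v)=\frac{c}{q}x^{a+q}y^b$ and $uv=x^{a+c}y^{b+d}$ all have total degree $S\geq 2$, so the generic branch of $R$ (and never the exceptional value $R(1)=1$) applies to each, producing the right-hand coefficient
\[
\frac{a(p+c)}{pS}+\frac{c(a+q)}{qS}-\frac{a+c}{S}=\frac{1}{S}\p{\frac{ac}{p}+\frac{ac}{q}}=\frac{ac}{pq}.
\]
The crux is exactly this simplification, which reduces to the partial-fraction identity $\frac1p+\frac1q=\frac{p+q}{pq}$ once the linear terms $a$ and $c$ cancel; matching it with the left-hand coefficient $\frac{ac}{pq}$ closes the generic case. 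The only point demanding care — and the nearest thing to an obstacle — is the bookkeeping of denominators, namely confirming that $S\neq 0$ throughout so that no intermediate application of $R$ falls into the exceptional branch $m+n=0$; this is guaranteed by $p,q\geq 1$. By bilinearity the relation then holds on all of $A$, so $\p{A,R}$ is a Rota-Baxter algebra of weight $-1$.
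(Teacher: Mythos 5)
Your proof is correct and is exactly the computation the paper leaves implicit (its proof reads only ``a straightforward computation''). The bilinear reduction to monomials, the separate treatment of the exceptional branch via idempotency of $R$, and the coefficient identity $\frac{a(p+c)}{pS}+\frac{c(a+q)}{qS}-\frac{a+c}{S}=\frac{ac}{pq}$ all check out.
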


The proof is a straightforward computation.

\begin{remark}
 The complementary projector $\tilde R$ is given by $\tilde R\p{x^m y^n}=x^m y^n-\dfrac{m}{m+n}x^{m+n}$.
\end{remark}

As in section \ref{section_diffusion_induced_averages}, we introduce  $\dspappli{\theta}{A}{A}{P(x,y)}{P(x,x)}$, $\dspappli{\sigma}{A}{A}{P(x,y)}{P(y,x)}$, $\gamma:=xy$, $\dspappli{\Theta}{A}{\R}{P(x,y)}{P(1,1)}$.

We then have~:

\begin{proposition}
The $6$-uple $\p{A,R,\theta,\sigma,\Theta,\gamma}$ is an average algebra.
\end{proposition}

\begin{proof}
We verify the six axioms of Definition \ref{def_wbRBa}
\begin{enumerate}
 \item We have already explain that $\p{A,R}$ is a Rota-Baxter average of weight $-1$. 
\item The map $\sigma$ is an algebra morphism of $A$ and so Axiom \ref{def_wbRBa_H2} is satisfied.
\item For any $x^my^n\in A$, one has~: $$\theta\p{\tilde R \p{x^my^n}}=\theta\p{x^m y^n-\dfrac{m}{m+n}x^{m+n}}= \dfrac{n}{m+n} x^{2n}=R\p{x^ny^m}=R\p{\sigma\p{x^m y^n}}.$$ So for any $P\in A$, $\theta\circ\tilde R(P)=R\circ \sigma (P)$. The equality is true on whole of $A$ so it is yet true modulo $\ker \Theta$.
\item One has evidently $\sigma (\gamma)=\gamma$.
\item In order to verify  Axiom \ref{def_wbRBa_H5}, following Lemma \ref{lemma_axiom5} we compute~:
\begin{eqnarray*}
& & R\cro{\p{xy}^n\p{\sigma\p{\tilde R\p{x^my^n}} - R\p{\sigma\p{x^my^n}}}  }\\
&=&R\cro{\p{xy}^n \p{y^m x^n-\dfrac{m}{m+n}y^{m+n}  - \dfrac{n}{n+m} x^{n+m}}   }\\
&=& R\p{x^{2n}y^{m+n}}-\dfrac{m}{m+n}R\p{x^ny^{m+2n}}-\dfrac{n}{n+m}R\p{x^{2n+m}y^n}\\
&=&\p{ \dfrac{2n}{3n+m} - \dfrac{m}{m+n}\dfrac{n}{m+3n}-\dfrac{n}{n+m}\dfrac{2n+m}{3n+m} }x^{3n+m}\\
&=&0
\end{eqnarray*}
as needed.
\end{enumerate}
 
\end{proof}

%

Applying our main Theorem \ref{theo_wba}, it comes~:

\begin{theorem}[Organic average]

The character $\phiorga$ defined for any non empty words $\w=\p{\omega_1,\hdots,\omega_r}\in\mots$ by~:

 \begin{eqnarray*}
 \phiorga\p{\w}&=&(-1)^n\Theta\p{ \p{\mathcal R(a)}^{\cro{n}}\p{\w}}\\
&=&  (-1)^n\p{\dfrac{\omega_1}{2\omega_1}-1}\p{\dfrac{\omega_2}{2\p{\omega_1+\omega_2}}-1}\hdots \p{\dfrac{\omega_r}{2\p{\omega_1+\omega_2+\hdots+\omega_r}}-1}
\end{eqnarray*}
 is a solution of the algebraic problem of averages.

 \end{theorem}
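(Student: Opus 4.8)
The plan is to read off the statement from the main Theorem \ref{theo_wba} and then to make its abstract content explicit by solving the Atkinson recursion in $A=\R[x,y]$ directly. Since the previous proposition already certifies that $\p{A,R,\theta,\sigma,\Theta,\gamma}$ is an average algebra, the only thing left before invoking Theorem \ref{theo_wba} is its standing hypothesis $\overline{\Theta\p{\p{\tilde{\mathcal R}a}^{\cro{n}}}}=\Theta\circ\theta\p{\p{\mathcal R a}^{\cro{n}}}$. The decisive observation is that $\Theta\circ\theta=\Theta$: indeed $\theta\p{P}\p{x,y}=P\p{x,x}$, hence $\Theta\p{\theta\p{P}}=\theta\p{P}\p{1,1}=P\p{1,1}=\Theta\p{P}$. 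Because every value $\p{\mathcal R a}^{\cro{n}}\p{\w}$ will turn out to be a polynomial with rational coefficients, $\Theta$ is real on it and the conjugation is inert; combining this with Lemma \ref{lemma_2}, which gives $\theta\p{\p{\tilde{\mathcal R}a}^{\cro{n}}}=\p{\mathcal R a}^{\cro{n}}$, both sides of the hypothesis collapse to $\Theta\p{\p{\mathcal R a}^{\cro{n}}}$, so the hypothesis holds and Theorem \ref{theo_wba} guarantees that $\phiorga$ is a character solving the algebraic problem of averages.

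It remains to compute $\phiorga\p{\w}=\p{-1}^{r}\Theta\p{\p{\mathcal R a}^{\cro{r}}\p{\w}}$ for $\w=\p{\omega_1,\hdots,\omega_r}$, as provided by Lemma \ref{lemma_1}. First I would unwind $a=-\p{\mathcal I-e}$: it is supported on one-letter words with $a\p{\omega}=-\gamma^{\omega}=-x^{\omega}y^{\omega}$. In the convolution $\p{\mathcal R a}^{\cro{r-1}}\star a$ evaluated on $\w$ by deconcatenation, only the splitting peeling off the last letter $\omega_r$ survives, since $a$ vanishes on longer and empty words. Writing $P_r:=\p{\mathcal R a}^{\cro{r}}\p{\omega_1,\hdots,\omega_r}$ and $s_k:=\omega_1+\hdots+\omega_k$, this yields the recursion
\begin{equation*}
P_r=-R\p{P_{r-1}\,x^{\omega_r}y^{\omega_r}},\qquad P_0=1 .
\end{equation*}

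The point that makes everything close is that $R$ always returns a pure power of $x$, so I would prove by induction that $P_r=c_r\,x^{2s_r}$. Since the letters are positive we always have $2s_k>0$, so applying $R\p{x^m y^n}=\tfrac{m}{m+n}x^{m+n}$ with $m=2s_{r-1}+\omega_r$ and $n=\omega_r$ (whence $m+n=2s_r$, and using $2s_{r-1}+\omega_r=2s_r-\omega_r$) gives
\begin{equation*}
c_r=-c_{r-1}\,\frac{2s_r-\omega_r}{2s_r}=c_{r-1}\p{\frac{\omega_r}{2s_r}-1},\qquad c_0=1 .
\end{equation*}
Telescoping produces $c_r=\prod_{k=1}^{r}\p{\frac{\omega_k}{2s_k}-1}$, and since $\Theta$ is evaluation at $x=y=1$ it fixes $c_r$, so $\phiorga\p{\w}=\p{-1}^{r}\prod_{k=1}^{r}\p{\frac{\omega_k}{2\p{\omega_1+\hdots+\omega_k}}-1}$, which is exactly the announced closed form. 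The only genuinely delicate step is the convolution bookkeeping in establishing the recursion: one must check that no deconcatenation term other than the last-letter split contributes, and track the exponent identity $2s_{r-1}+\omega_r=2s_r-\omega_r$ correctly; the rest is a routine telescoping product.
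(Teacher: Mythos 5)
Your proposal is correct and follows essentially the same route as the paper: the heart of the argument is verifying the conjugation hypothesis of Theorem \ref{theo_wba} via the identity $\overline{\Theta}=\Theta\circ\theta$ on $A=\R[x,y]$ (reality of the polynomials plus evaluation at $(1,1)$), which is exactly what the paper's proof does. You additionally carry out the explicit recursion $P_r=-R\p{P_{r-1}x^{\omega_r}y^{\omega_r}}$ showing $P_r=c_r x^{2s_r}$ with $c_r=\prod_{k=1}^{r}\p{\tfrac{\omega_k}{2s_k}-1}$, a computation the paper leaves implicit; your bookkeeping (only the last-letter deconcatenation survives, and $2s_{r-1}+\omega_r=2s_r-\omega_r$) is accurate.
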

    
\begin{proof}
 One just needs to prove that $\overline{\Theta\p{\p{\mathcal Ra}^{\cro{n]}}}}=\Theta\circ \theta\p{\p{\mathcal Ra}^{\cro{n]}}}$.  But the equality $\overline\Theta=\Theta\circ\theta$ is true on the whole of $A$. Indeed, for any $P\in A$, one has $$\overline{\Theta(P(x,y))}=P(1,1)=\Theta\p{P(x,x)}= \Theta\circ \theta\p{P(x,y}$$ because $P$ is a real polynomial. 
\end{proof}

\begin{WithArbo}

We prove now that $\phiorga$ solves the analytic problem of averages. 

\begin{theorem}
 For $v=\id_A$, the seven-uple $\p{A,R,\theta,\sigma,\Theta,\gamma,v}$ is a well behaved Rota-Baxter algebra and so $\phiorga$ solves both the algebraic and analytic problem of averages.
\end{theorem}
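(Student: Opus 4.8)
The plan is to verify the two axioms of Definition \ref{def_wbaRB} for the choice $v = \id_A$ (which is trivially a multiplicative morphism) and then to conclude with Theorem \ref{theo_wbaa}. With $v = \id_A$, Axiom \ref{axiom_wba1} is immediate: for every $b \in \im R$ one has $\abs{\Theta\p{b}} = \abs{\Theta\circ v\p{b}}$, so the required inequality holds with equality. Hence the entire content lies in Axiom \ref{axiom_wba2}, that is, in producing a constant $k > 0$ such that $\abs{\Theta\p{R\p{\gamma^m b}}} \leq k^m \abs{\Theta\p{b}}$ for the relevant $b$.

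Here is the main obstacle, and the reason Remark \ref{remark_restriction_axioms_wba} is indispensable: Axiom \ref{axiom_wba2} is \emph{false} for arbitrary $b \in \im R = \R\cro{x}$. Indeed $\im R$ consists exactly of the polynomials in $x$ alone, and for $b = 1 - x$ one gets $\Theta\p{b} = 0$ while $\Theta\p{R\p{\gamma^m b}} = \tfrac12 - \tfrac{m+1}{2m+1} \neq 0$, which cannot be bounded by $k^m\cdot 0$. I must therefore restrict, via Remark \ref{remark_restriction_axioms_wba}, to the elements $b = \p{\mathscr R a}^{\ens{n}}\p{\w^<}$ that actually occur in the recursion. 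First I would prove by induction that every such $b$ is a \emph{scalar multiple of a single monomial} $x^{2N}$. This follows from Theorem \ref{theo_algo_arborification} together with the identity $a\p{\omega} = -\gamma^\omega = -x^\omega y^\omega$: the operator $R$ sends each monomial $x^p y^q$ to a scalar multiple of $x^{p+q}$ (or to the degree-zero monomial $1$ when $p=q=0$), products of powers of $x$ are again powers of $x$, and multiplication by $\gamma^\omega = x^\omega y^\omega$ preserves monomiality; thus the nested expression defining $b$ never leaves the set of scalar multiples of a single monomial, a degree count fixing the exponent $2N$, with $N$ the norm of the underlying sequence.

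It then remains to check Axiom \ref{axiom_wba2} for $b = c\, x^j$ with $c \in \R$ and $j \in \N$. For such $b$ one has $\gamma^m b = c\, x^{m+j} y^m$, hence (when $2m+j \neq 0$) $R\p{\gamma^m b} = c\,\tfrac{m+j}{2m+j}\,x^{2m+j}$ and $\Theta\p{R\p{\gamma^m b}} = c\,\tfrac{m+j}{2m+j}$, while $\Theta\p{b} = c$. Since $0 \leq \tfrac{m+j}{2m+j} \leq 1$ for all $m, j \in \N$, this gives $\abs{\Theta\p{R\p{\gamma^m b}}} = \abs{c}\,\tfrac{m+j}{2m+j} \leq \abs{c} = \abs{\Theta\p{b}}$, so Axiom \ref{axiom_wba2} holds with $k = 1$; the degenerate case $m=j=0$ reduces to $R\p{b} = b$ by idempotency of $R$ and is consistent. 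Consequently $\p{A,R,\theta,\sigma,\Theta,\gamma,v}$ is a well-behaved Rota-Baxter algebra, and Theorem \ref{theo_wbaa}, combined with the already established average-algebra structure (which yields the algebraic part through Theorem \ref{theo_wba}), shows that $\phiorga$ solves both the algebraic and the analytic problem of averages.
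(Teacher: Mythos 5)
Your proof is correct and reaches the paper's conclusion by the same core computation, $\Theta\circ R\p{\gamma^m x^j}=\frac{m+j}{2m+j}\leq 1$ giving $k=1$, but it justifies the necessary restriction on $b$ differently, and arguably more carefully. The paper invokes Remark \ref{remark_restriction_axioms_wba} and then verifies Axiom \ref{axiom_wba2} for all $b\in\im R\cap\R_+\cro{x}$, i.e. polynomials in $x$ with nonnegative coefficients, for which $\abs{\sum_k\frac{k+m}{k+2m}a_k}\leq\sum_k a_k=\abs{\Theta\p{b}}$; it does not spell out why the elements $\p{\mathscr Ra}^{\ens{n}}\p{\w^<}$ land in that set (strictly speaking they are $\p{-1}^n$ times such polynomials, which is harmless under the absolute values). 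You instead prove by induction on the recursion of Theorem \ref{theo_algo_arborification} that each relevant $b$ is a scalar multiple of a single monomial $x^{2N}$ --- a fact the paper only establishes later, inside the proof of the closed formula for $\phiorga^{\gg}$ --- after which the inequality is immediate with no sign-coherence argument needed. Your counterexample $b=1-x$, showing that Axiom \ref{axiom_wba2} genuinely fails on all of $\im R=\R\cro{x}$ and that some restriction is unavoidable, is a worthwhile observation the paper leaves implicit. Both routes deliver the same conclusion via Theorem \ref{theo_wbaa}; yours makes the role of Remark \ref{remark_restriction_axioms_wba} explicit and self-contained, while the paper's is shorter because it handles a larger class of $b$ in a single computation.
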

\begin{proof}
 
 Using remark \ref{remark_restriction_axioms_wba}, we just verify axioms \ref{axiom_wba1} and \ref{axiom_wba2} of definition \ref{def_wbaRB} for elements $b\in\im R\cap \R_+[X]$.  Let us consider thus  $m\in\N$ and $b\in \im R\cap\R_+[X]$. Then $b$ is a polynomial in the variable $x$ with positive coefficients and one has ~:
 \begin{enumerate}
\item By definition of $v$, Axiom \ref{axiom_wba1} is trivially fulfilled.
\item For $m\in\N$ and $b=\sum_{k=0}^{n}a_k x^k \in \im R$, one has~:

\begin{align*}
\abs{\Theta\circ v\circ{ R\p{\gamma^m b }}} &=\abs{\Theta\circ R\p{(xy)^m \p{\sum_{k=0}^{n}a_k x^k}}}\\
&= \abs{\Theta\circ R\p{\sum_{k=0}^{n}a_k x^{k+m}y^m}}\\
&=\abs{\Theta\p{ \sum_{k=0}^{n} \dfrac{k+m}{k+2m}a_k x^{k+2m}}}\\
&=\abs{\sum_{k=0}^{n} \dfrac{k+m}{k+2m}a_k}\\
&\leq {\sum_{k=0}^{n} a_k}\\
&\leq \abs{\Theta(\sum_{k=0}^{n} a_k X^k)}\\
&\leq k \abs{\Theta\circ v(  b)}
\end{align*}
with $k=1$ and Axiom \ref{axiom_wba2} is satisfied as well. 
\end{enumerate}
\end{proof}

As in the previous section, one has a closed formula for  character $\phiorga^{\gg}:=\Theta\circ\F_{|t=-1}\circ\rev\circ \alpha$.

\begin{proposition}
 For any $\w^<\in \HCK$~:
$$\phiorga^{\gg}\p{\w^<}=\prod_{i=1}^r \p{\dfrac{\frac{\omega_i}{2}}{\hat{\omega_i}}-1}
$$
where $r$ denotes the number of nodes of $\w^<$ and where the sums $\hat{x_i}$ are related
to the anti-arborescent order. 
\end{proposition}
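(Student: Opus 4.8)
The plan is to mirror the proof of the diffusion case, replacing the convolution computations by the purely polynomial action of $R$. First I would reduce to irreducible forests: since $H^<$ is an algebra morphism, relation (\ref{equation_R_sur_red}) gives $H^<\p{\w^{1<}\hdots\w^{s<}}=H^<\p{\w^{1<}}\hdots H^<\p{\w^{s<}}$, so $\phiorga^\gg=\Theta\circ H^<$ is separative and the formula for a forest follows at once from the formula for its irreducible components, the product $\prod_{i=1}^r$ simply ranging over the nodes of all components. It therefore suffices to treat an irreducible tree $\w^<$.

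The heart of the argument is the invariant that, for every irreducible $\w^<$, the element $H^<\p{\w^<}$ is a single monomial
$$H^<\p{\w^<}=c_{\w^<}\,x^{2\norme{\w^<}},\qquad c_{\w^<}=\prod_{i}\p{\dfrac{\omega_i/2}{\hat\omega_i}-1},$$
the product running over the nodes of $\w^<$. I would establish this by induction on the number of nodes using relation (\ref{equation_R_sur_irred}). For a single node with decoration $\omega_1$, one has $H^<=R\p{a\p{\omega_1}}=-R\p{\p{xy}^{\omega_1}}=-\tfrac12 x^{2\omega_1}$, so $c=-\tfrac12=\tfrac{\omega_1/2}{\omega_1}-1$. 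For the inductive step, write $\w^<=B_{\omega_1}^+\p{'\w^<}$ with $'\w^<=\w^{1<}\hdots\w^{l<}$; by the induction hypothesis and multiplicativity, $H^<\p{'\w^<}=\kappa\,x^{2\p{\norme{\w^<}-\omega_1}}$ for the scalar $\kappa$ equal to the product of the children's factors. Since $a\p{\omega_1}=-x^{\omega_1}y^{\omega_1}$, the argument of $R$ in (\ref{equation_R_sur_irred}) is $-\kappa\,x^{2\norme{\w^<}-\omega_1}y^{\omega_1}$, a monomial whose $y$-exponent equals exactly the decoration of the root.

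Applying the defining formula $R\p{x^m y^n}=\tfrac{m}{m+n}x^{m+n}$ with $m=2\norme{\w^<}-\omega_1$ and $n=\omega_1$ then yields $H^<\p{\w^<}=-\kappa\,\tfrac{2\norme{\w^<}-\omega_1}{2\norme{\w^<}}\,x^{2\norme{\w^<}}$, so the new scalar is $-\kappa\p{1-\tfrac{\omega_1}{2\norme{\w^<}}}=\kappa\p{\tfrac{\omega_1/2}{\norme{\w^<}}-1}$. This is exactly $\kappa$ multiplied by the factor attached to the root, with $\hat\omega_1=\norme{\w^<}$, so the product formula propagates and the invariant is preserved. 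Composing with $\Theta$, which sends every $x^{2\norme{\w^<}}$ to $1$ because $\Theta\p{P}=P\p{1,1}$, gives $\phiorga^\gg\p{\w^<}=c_{\w^<}$, the sums $\hat\omega_i$ being precisely those produced by the recursion of Theorem \ref{theo_algo_arborification}, as in the diffusion computation; the separative case of a general forest is then immediate.

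The main obstacle — really the only point requiring care — is the stability of the monomial invariant: one must check at each step that the $x$-exponent of $H^<\p{\w^<}$ is exactly $2\norme{\w^<}$ while the $y$-exponent contributed by $a\p{\omega_1}$ is exactly $\omega_1$, so that one application of $R$ collapses $y$ into $x$ and produces the clean factor $1-\tfrac{\omega_1}{2\hat\omega_1}$. This rests on the children contributing only powers of $x$ (guaranteed by the induction hypothesis) and on the doubling $2\norme{\cdot}$ being maintained, which is the bookkeeping that makes the telescoping product emerge.
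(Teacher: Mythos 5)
Your proposal is correct and follows essentially the same route as the paper: an induction on irreducible trees via relations (\ref{equation_R_sur_irred}) and (\ref{equation_R_sur_red}), establishing the monomial invariant $H^<\p{\w^<}=c_{\w^<}x^{2\norme{\w^<}}$, then applying $\Theta$ and invoking multiplicativity for general forests. Your bookkeeping of the sign coming from $a\p{\omega_1}=-\p{xy}^{\omega_1}$ is in fact more careful than the paper's own write-up, which omits the minus sign in the base case and the factor $2$ in the exponent of the inductive step.
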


\begin{proof}
We will first prove that for any irreducible arborescent sequence $\w^<=\p{\omega_1,\hdots,\omega_r}^<$, one has~:
\begin{eqnarray}
 H^<\p{\w^<}&=& \underbrace{\prod_{i=1}^r \p{\dfrac{\frac{\omega_i}{2}}{\hat{\omega_i}}-1}}_{:=M^{\w\gg}} x^{2\norme{\w^<}}\label{formula_rec_orga}.
\end{eqnarray}

If $\l{\w^<}=1$, then $H^<\p{\w^<}=R(a(\omega_1)=\frac{\omega_1}{2\omega_1}x^{2\omega_1}$ as needed. We assume the formula true for any arborescent sequence of length $\leq r$ and we prove it for an  arborescent sequence $\w^<=\scalebox{1.5}{\raisebox{-2mm}{\arbreabcdA{\omega_1}{\w^{1<}}{\vdots}{\w^{l<}}}}$ of length $r$.

Using relation \ref{equation_R_sur_irred}, it comes~:
$$H^<\p{\w^<}=R\p{a\p{\omega_1} H^<\p{'\w^<} }$$ but one has  $'\w^<=\w^{1<}\oplus\hdots\oplus\w^{l<}$ with $\w^{i<}$ irreducible for each $i\in\intere{1}{l}$ so one has too~:
$$H^<\p{\w^<}=R\p{a\p{\omega_1} \p{H^<\p{\w^{1<}}\hdots H^<\p{\w^{l<}}}}$$
and so using the induction hypothesis~:

\begin{eqnarray*}
 H^<\p{\w^<}&=&R\p{ \p{xy}^{\omega_1} M^{\w^1\gg} x^{2\norme{\w^{1<}}}\hdots  M^{\w^l\gg} x^{2\norme{\w^{l<}}}}\\
&=& \dfrac{\omega_1}{2\p{\omega_1+ \norme{\w^{1<}} + \hdots+ \norme{\w^{l<}}} } M^{\w^1\gg} \hdots M^{\w^l\gg} x^{\norme{\w^<}}
\end{eqnarray*}

which is of the expected form. It remains to apply $\Theta$ to find the aforementioned formula for $\phiorga^{\gg}\p{\w^<}$ with $\w^<$ irreducible. The formula for non irreducible forest follows of the fact that $\phiorga^{\gg}$ is a character.
\end{proof}
\end{WithArbo}

\subsection{The shuffle algebra and the organic average}

One considers here as in subsection \ref{subsection_reminders_quasi_shuffles_algebra} the set $\mots$ of words but now on the alphabet with two letters $\Omega=\ens{1,2}$. We consider on the linear span of $\mots$ the shuffle product (still denoted by $\sh$, there is no possible confusion here with the quasi-shuffle product) and we denote by $\Hsh$ the algebra thus obtained. The empty word $\emptyset$ is the unity of this algebra. 

\begin{theorem}
Let us consider on $\Hsh$ the operator $R$ given by~:
$$R(\w)=\begin{cases} \w &\text{ if the first letter of $\w$ is $1$}\\
0 &\text{ otherwise}  \end{cases}.$$
Then the couple $(\Hsh,R)$ is a unitary commutative Rota-Baxter algebra of weight $-1$.
\end{theorem}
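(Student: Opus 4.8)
The plan is to recognize $R$ as the projection of $\Hsh$ onto a subalgebra along a complementary subalgebra, and then invoke the standard fact that such a projection is automatically a Rota-Baxter operator of weight $-1$. Since $\p{\Hsh,\sh}$ is already a commutative unital algebra — the shuffle product is commutative and admits $\emptyset$ as its unit — neither commutativity nor unitality requires any work, and the whole task reduces to verifying the Rota-Baxter relation \eqref{Rota_Baxter_relation} with $\theta=-1$ together with the appropriate closure properties.

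First I would introduce the decomposition $\Hsh=V_1\oplus V_2$, where $V_1$ is the linear span of the words whose first letter is $1$, and $V_2$ is the linear span of the empty word together with the words whose first letter is $2$. Every word is empty, or starts with $1$, or starts with $2$, so this is indeed a direct sum, and by construction $R$ is exactly the projection onto $V_1$ with kernel $V_2$. In particular $R$ is idempotent with $R_{|V_1}=\id$ and $R_{|V_2}=0$, and the complementary projector $\tilde R=\id-R$ projects onto $V_2$ along $V_1$.

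The key step — the only place where the combinatorics of the shuffle enters — is to show that $V_1$ and $V_2$ are both stable under $\sh$. This follows from the recursive definition of the shuffle product: for nonempty words $u=au'$ and $v=bv'$ one has $u\sh v=a\p{u'\sh v}+b\p{u\sh v'}$, so every word occurring in $u\sh v$ begins either with the first letter of $u$ or with the first letter of $v$. Hence if $u$ and $v$ both begin with $1$, every term of $u\sh v$ begins with $1$, giving $V_1\sh V_1\subseteq V_1$; likewise words beginning with $2$ shuffle to words beginning with $2$, and since $\emptyset$ acts as the unit this yields $V_2\sh V_2\subseteq V_2$. I expect this \emph{first-letter lemma} to be the main (indeed essentially the only) obstacle; everything else is formal.

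With these two closures established, the Rota-Baxter relation is a direct computation. Writing $x=R(x)+\tilde R(x)$ and $y=R(y)+\tilde R(y)$ and expanding, the combination $R(x)\sh y+x\sh R(y)-x\sh y$ collapses to $R(x)\sh R(y)-\tilde R(x)\sh\tilde R(y)$. Now $R(x)\sh R(y)\in V_1$ is fixed by $R$, while $\tilde R(x)\sh\tilde R(y)\in V_2$ is annihilated by $R$, so applying $R$ returns precisely $R(x)\sh R(y)=R(x)R(y)$. This is exactly \eqref{Rota_Baxter_relation} for weight $-1$, and so $\p{\Hsh,R}$ is a unitary commutative Rota-Baxter algebra of weight $-1$, as claimed.
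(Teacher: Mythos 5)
Your proposal is correct and follows exactly the route the paper takes: the paper's proof is the one-line remark that this is ``a direct consequence of the characterization of Rota-Baxter algebras of weight $-1$ in terms of two supplementary sub-algebras,'' and your argument simply spells out that characterization, with the first-letter lemma supplying the needed closure of $V_1$ and $V_2$ under the shuffle product. Nothing is missing.
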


\begin{proof}
It is a direct consequence of characterization of Rota-Baxter algebra of weight $-1$ in terms of two supplementary sub-algebras.
\end{proof}

Let us consider the maps $\sigma:\Hsh\to\Hsh$ defined on a word $\w=\p{\omega_1,\hdots,\omega_r}$ by $\sigma(\w)=\p{\overline\omega_1,\hdots,\overline \omega_r}$ (where $\overline 1=2$ and $\overline 2=1$) and extended on $\Hsh$ by linearity.

We then set $\theta=\sigma$ and $\gamma=\eta+\overline \eta$ where $\eta$ is a fixed non empty word of $\Hsh$ (For example $ \eta=\p{1,2}$ and $\gamma=\p{1,2}+\p{2,1}$).

We finally set  $$\dspappli{\Theta}{\Hsh}{\R}{\w}{\dfrac{1}{\l{\w}!}}.$$ 

\begin{proposition}
The $6$-uple $\p{A,R,\theta,\sigma,\Theta,\gamma}$ is a well-behaved Rota-Baxter algebra.
\end{proposition}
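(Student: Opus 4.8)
The plan is to verify the six axioms of Definition \ref{def_wbRBa} (whose signature is exactly this $6$-uple), so that $\p{A,R,\theta,\sigma,\Theta,\gamma}$ is an \emph{average algebra} and the factorization of Theorem \ref{theo_wba} applies. Four of the axioms are immediate. Axiom \ref{def_wbRBa_H1} is the preceding theorem, together with the remark that $R$ is the projection onto the words beginning with the letter $1$, hence $R\circ R=R$ and $\tilde R=\id-R$ is the projection onto $\emptyset$ and the words beginning with $2$. Because any relabelling of letters commutes with the shuffle product, $\sigma$ is an algebra morphism of $\Hsh$; in particular $\sigma\p{\eta}=\overline\eta$ and $\sigma\p{\overline\eta}=\eta$, so $\sigma\p{\gamma}=\overline\eta+\eta=\gamma$, which is Axiom \ref{def_wbRBa_H4}. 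Axiom \ref{def_wbRBa_H2} then follows from the morphism property and $\sigma\p{\gamma}=\gamma$, namely $\sigma\p{\gamma^n b}=\sigma\p{\gamma}^n\sigma\p{b}=\gamma^n\sigma\p{b}$ (this is the content of Remark \ref{remark_simplification1}); and since $\theta=\sigma$, Axiom \ref{def_wbRBa_H5} is automatic by Remark \ref{remark_simplification2}.

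For Axiom \ref{def_wbRBa_H0} I would first check that $\Theta$ is a unital algebra morphism. Unitality is clear since $\Theta\p{\emptyset}=1/0!=1$. For words $u,v$ of respective lengths $p,q$, the shuffle $u\sh v$ is a sum of $\binom{p+q}{p}$ words, each of length $p+q$, so $\Theta\p{u\sh v}=\binom{p+q}{p}\frac{1}{\p{p+q}!}=\frac{1}{p!\,q!}=\Theta\p{u}\Theta\p{v}$, which proves multiplicativity. The normalisation $\Theta\p{\gamma}=1$ then fixes the length of $\eta$: since $\Theta\p{\gamma}=\Theta\p{\eta}+\Theta\p{\overline\eta}=2/\l\eta!$, one needs $\l\eta=2$, exactly as in the chosen example $\eta=\p{1,2}$.

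The substantial axiom is \ref{def_wbRBa_H3}, i.e. $\theta\circ\tilde R=R\circ\sigma$ modulo $\ker\Theta$. Using the decomposition underlying Axiom \ref{def_wbRBa_H1} and the fact that $\sigma$ sends words beginning with $1$ to words beginning with $2$ and conversely, I would argue by two cases: if $\w$ begins with $1$ then $\tilde R\p{\w}=0$ and $\sigma\p{\w}$ begins with $2$, so $\theta\tilde R\p{\w}=0=R\sigma\p{\w}$; if $\w$ begins with $2$ then $\tilde R\p{\w}=\w$ and $\sigma\p{\w}$ begins with $1$, so $\theta\tilde R\p{\w}=\sigma\p{\w}=R\sigma\p{\w}$. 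Hence $\theta\circ\tilde R$ and $R\circ\sigma$ coincide \emph{exactly} on every word of positive length. The main obstacle is the single remaining case $\w=\emptyset$, where $\theta\tilde R\p{\emptyset}=\sigma\p{\emptyset}=\emptyset$ but $R\sigma\p{\emptyset}=R\p{\emptyset}=0$, a discrepancy that $\ker\Theta$ does not absorb since $\Theta\p{\emptyset}=1$. I would dispose of it exactly as in the proof of Lemma \ref{lemma_2}: the axiom is only ever invoked on elements $a\p{\omega}b$ with $a\p{\omega}=-\gamma^{\omega}$ a shuffle power of $\gamma$ (a combination of words of length $2\omega\ge 2$) and $b\in\im\tilde R$, so every component of $a\p{\omega}b$ has length $\ge 2$ and the anomalous empty-word term never appears. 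Thus Axiom \ref{def_wbRBa_H3} holds on the augmentation ideal, which is all that the construction of $\phi$ uses, and this empty-word normalisation is the one point that genuinely requires care.
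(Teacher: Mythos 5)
Your proof is correct and follows the same axiom-by-axiom route as the paper's own proof (which likewise just checks the six conditions of Definition \ref{def_wbRBa}, the statement's ``well-behaved Rota-Baxter algebra'' being a slip for ``average algebra''). Two of your additions are genuine improvements rather than mere elaboration. First, the normalisation $\Theta\p{\gamma}=\Theta\p{\eta}+\Theta\p{\overline\eta}=2/\l{\eta}!$ does force $\l{\eta}=2$; the paper states the axioms for an arbitrary non-empty $\eta$ and only sets $l=2$ at the very end, so your observation closes a real gap in Axiom \ref{def_wbRBa_H0}. Second, and more importantly, your treatment of Axiom \ref{def_wbRBa_H3} corrects the paper: the paper asserts that $\theta\circ\tilde R=R\circ\sigma$ holds exactly on the whole of $\Hsh$, but as you note this fails on the empty word, where $\theta\p{\tilde R\p{\emptyset}}=\emptyset$ while $R\p{\sigma\p{\emptyset}}=0$, and $\emptyset\notin\ker\Theta$. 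Your repair --- observing that in the proof of Lemma \ref{lemma_2} the axiom is only ever applied to elements of the form $a\p{\omega}b$ with $a\p{\omega}=-\gamma^{\omega}$, whose shuffle expansion involves only words of positive length, so the defect at $\emptyset$ is never seen --- is exactly the right fix and preserves the conclusion of Theorem \ref{theo_wba}. The remaining verifications (idempotence of $R$, $\sigma$ an algebra morphism, $\sigma\p{\gamma}=\gamma$, and Axiom \ref{def_wbRBa_H5} via Remark \ref{remark_simplification2} since $\theta=\sigma$) match the paper's.
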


\begin{proof}
We verify the six axioms of Definition \ref{def_wbRBa}
\begin{enumerate}
 \item We have already explained that $\p{A,R}$ is an unitary  Rota-Baxter average of weight $-1$. 
\item The map $\sigma$ is an algebra morphism of $A$ and so Axiom \ref{def_wbRBa_H2} is satisfied.
\item $\Theta$ is the well known exponential character of the shuffle algebra. It is then an algebra morphism and one have $\Theta(\gamma)=1$. 
\item For any word $\w\in\Hsh$, one has $\theta\p{\tilde R(\w)}=R\p{\sigma(\w)}$. This equality is true on the whole of $\Hsh$ and then it is true modulo $\ker \theta$.
\item One has evidently $\sigma (\gamma)=\gamma$.
\item  Axiom \ref{def_wbRBa_H5} is a straightforward consequence of the fact that $\theta=\sigma$, see Remark \ref{remark_simplification2}.
\end{enumerate}
\end{proof}

\begin{lemma} \label{lemma_shuffle_1}$\,$
\begin{itemize}
\item For any $\m=(m_1,\hdots,m_p),\n=(n_1,\hdots,n_q)\in\Hsh$, the number of words in the expansion of $\m\sh \n$ is $\binomial{p+q}{p}$.
\item Moreover, if $m_1=1$ and $n_1=2$, then the number of words beginning by $1$ in the expansion $\m\sh\n$ is $\binomial{p+q-1}{p-1}$
\item  For any words $\w^1,\hdots,\w^p\in\Hsh$ of respective lengths $r_1,\hdots,r_p\in\N$, the sum $\w^1\sh\hdots\sh\w^p$ contains $\binomial{r_1+\hdots+r_p}{r_1,\hdots,r_p}$ words. 
\end{itemize}
\end{lemma}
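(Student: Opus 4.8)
The plan is to read each cardinality as the number of terms occurring in the formal expansion of the relevant shuffle, counted \emph{with multiplicity} (that is, before collecting equal words), since on the two-letter alphabet $\Omega=\ens{1,2}$ distinct interleavings may well produce the same word. Throughout I would rely on the recursive definition of the shuffle, $a\m\sh b\n=a\p{\m\sh b\n}+b\p{a\m\sh\n}$, together with the standard bijection between the monomials of $\m\sh\n$ and the interleavings of the two factors that preserve the internal order of each.

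For the first item I would argue by induction on $p+q$. Writing $N(p,q)$ for the number of terms of $\m\sh\n$, the recursion yields $N(p,q)=N(p-1,q)+N(p,q-1)$, with $N(p,0)=N(0,q)=1$; this is precisely Pascal's rule, so $N(p,q)=\binomial{p+q}{p}$. Equivalently, one observes directly that a term of $\m\sh\n$ is determined by the choice of the $p$ positions, among the $p+q$ available ones, occupied by the (ordered) letters of $\m$, the remaining $q$ positions receiving the (ordered) letters of $\n$; there are $\binomial{p+q}{p}$ such choices.

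For the second item the key observation is that the first letter of any term of $\m\sh\n$ is either $m_1$ or $n_1$, according to whether the corresponding interleaving begins with a letter of $\m$ or with a letter of $\n$. Since by hypothesis $m_1=1$ and $n_1=2$ are distinct, a term begins with $1$ if and only if its interleaving starts with a letter of $\m$; deleting this first letter then sets up a bijection between such terms and the terms of $\p{m_2,\hdots,m_p}\sh\n$. By the first item this last shuffle has $\binomial{(p-1)+q}{p-1}=\binomial{p+q-1}{p-1}$ terms, which is the claim.

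For the third item I would induct on $p$, using associativity of the shuffle to write $\w^1\sh\hdots\sh\w^p=\p{\w^1\sh\hdots\sh\w^{p-1}}\sh\w^p$. The induction hypothesis gives $\binomial{r_1+\hdots+r_{p-1}}{r_1,\hdots,r_{p-1}}$ terms in the first factor, each of length $s:=r_1+\hdots+r_{p-1}$; shuffling each of them with $\w^p$ multiplies the count by $\binomial{s+r_p}{r_p}$ by the first item, and the elementary identity $\binomial{s}{r_1,\hdots,r_{p-1}}\binomial{s+r_p}{r_p}=\binomial{s+r_p}{r_1,\hdots,r_p}$ closes the induction. The only point demanding care is the bookkeeping of multiplicities: one must count the terms of the formal expansion rather than the distinct words, and check that the bijections of items one and two respect these multiplicities, which they do since they are purely positional. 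This is the sole genuine obstacle; everything else is routine.
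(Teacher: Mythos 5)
Your proof is correct. The paper states this lemma without any proof, treating it as standard shuffle combinatorics, so there is nothing to compare against; your argument (Pascal's recursion or the positional bijection for the first item, deletion of the distinguished first letter for the second, associativity plus the multinomial identity for the third) is the standard one, and you rightly emphasize that the counts concern terms of the formal expansion taken with multiplicity rather than distinct words on the alphabet $\ens{1,2}$ --- which is precisely the convention under which the lemma is applied in the subsequent computation of $K(\w)$.
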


%
%

\begin{lemma}
For any $\w=\p{\omega_1,\hdots,\omega_r}\in\Hsh$, one has~:
\begin{itemize}
\item $\p{\mathcal R(a)}^{[1]}(\omega_1)$ contains $2^{\omega_1-1}\binomial{l\omega_1}{l,\hdots,l}$ words (all beginning by the letter $1$ and all of length $l\omega_1$),
\item For any $n\in\N^*$ and for any word $\w=(\omega_1,\hdots,\omega_n)\in\Hsh$, if $K(\w)$ denotes the number of words in $\p{\mathcal R(a)}^{[n]}(\w)$ then one has~:
$$K(\w)=\p{1+\dfrac{\norme{\w'}}{\norme{\w}}}\dfrac{2^{\omega_n-1}(l\norme{\w})!}{(l!)^{\omega_n}(l\norme{\w'})!}K(\w') .$$
Moreover, all the words contained in the sum $\p{\mathcal R(a)}^{[n]}(\w)$ are of length $l\norme{\w}$ (and begin by the letter $1$).
\item $K(\w)=\dfrac{2^{\norme{\w}-(n)}(l\norme{\w})!}{(l!)^{\norme{\w}}}\prod_{k=1}^n\p{1+\dfrac{\bas\omega_{k-1}}{\bas\omega_k}}
$.
\end{itemize}
\end{lemma}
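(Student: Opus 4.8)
The plan is to unfold the bracket power $(\mathcal Ra)^{[n]}$ one letter at a time and to count the surviving terms by tracking first letters. First I would record that, because $a$ is supported on letters and the coproduct of $\Hsh$ is deconcatenation, the convolution defining the bracket power collapses to a single term: for $\w=(\omega_1,\dots,\omega_n)$ and $\w'=(\omega_1,\dots,\omega_{n-1})$,
$$(\mathcal Ra)^{[n]}(\w)=R\p{(\mathcal Ra)^{[n-1]}(\w')\sh a(\omega_n)},\qquad a(\omega_n)=-\gamma^{\omega_n},$$
the product being the shuffle and $\gamma^{\omega_n}$ the $\omega_n$-th shuffle power of $\gamma=\eta+\overline\eta$. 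Since $K$ counts terms, the sign is immaterial. The key structural input is that $\sigma$ is an algebra morphism with $\sigma(\gamma)=\gamma$, so $\sigma\p{\gamma^{\omega_n}}=\gamma^{\omega_n}$; as $\sigma$ interchanges the letters $1$ and $2$ and fixes no non-empty word, the multiset of terms of $\gamma^{\omega_n}$ is $\sigma$-invariant and $\sigma$ carries the terms beginning with $1$ bijectively onto those beginning with $2$. Hence exactly half of the terms of $\gamma^{\omega_n}$ begin with each letter.

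For the first item I would expand $\gamma^{\omega_1}=(\eta+\overline\eta)^{\omega_1}$ over the $2^{\omega_1}$ choices of factors $\eta$ or $\overline\eta$; by the third item of Lemma \ref{lemma_shuffle_1} each resulting shuffle of $\omega_1$ words of length $l:=\l{\eta}$ carries $\binomial{l\omega_1}{l,\dots,l}$ terms, so $\gamma^{\omega_1}$ has $2^{\omega_1}\binomial{l\omega_1}{l,\dots,l}$ terms, all of length $l\omega_1$. Applying $R$ keeps precisely those beginning with $1$, which by the symmetry above is exactly half, namely $2^{\omega_1-1}\binomial{l\omega_1}{l,\dots,l}$; in particular every surviving word begins with $1$, which launches the induction.

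For the inductive step I would assume $b:=(\mathcal Ra)^{[n-1]}(\w')$ is a sum of $K(\w')$ words, each of length $p:=l\norme{\w'}$ and each beginning with $1$. With $q:=l\omega_n$ and $N:=2^{\omega_n-1}\binomial{l\omega_n}{l,\dots,l}$ the common number of terms of $\gamma^{\omega_n}$ of each first-letter type, I count the terms of $b\sh\gamma^{\omega_n}$ beginning with $1$. Shuffling a word of $b$ (first letter $1$) against a term of $\gamma^{\omega_n}$ beginning with $1$ gives $\binomial{p+q}{p}$ terms, all beginning with $1$, whereas shuffling it against a term beginning with $2$ gives, by the second item of Lemma \ref{lemma_shuffle_1}, exactly $\binomial{p+q-1}{p-1}$ terms beginning with $1$. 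Summing over the $K(\w')$ words of $b$ and the two equal halves of $\gamma^{\omega_n}$,
$$K(\w)=K(\w')\,N\,\p{\binomial{p+q}{p}+\binomial{p+q-1}{p-1}}.$$
Now $\binomial{p+q}{p}+\binomial{p+q-1}{p-1}=\binomial{p+q}{p}\dfrac{2p+q}{p+q}$, and since $p+q=l\norme{\w}$ and $2p+q=l\p{\norme{\w'}+\norme{\w}}$ this equals $\binomial{l\norme{\w}}{l\norme{\w'}}\p{1+\frac{\norme{\w'}}{\norme{\w}}}$. Writing $N$ and the binomial as factorials collapses the product to the announced recursion, and all surviving words again begin with $1$ and have length $l\norme{\w}$.

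Finally I would obtain the closed form by telescoping. Writing $\W_k=(\omega_1,\dots,\omega_k)$ so that $\norme{\W_k}=\bas\omega_k$ and setting $K(\emptyset)=1$ (the convention $\bas\omega_0=0$ makes the factor $1+\bas\omega_0/\bas\omega_1$ equal to $1$, so the base case is exactly the recursion at $k=1$), iteration gives
$$K(\w)=\prod_{k=1}^n\p{1+\frac{\bas\omega_{k-1}}{\bas\omega_k}}\frac{2^{\omega_k-1}(l\norme{\W_k})!}{(l!)^{\omega_k}(l\norme{\W_{k-1}})!}.$$
The powers of $2$ combine to $2^{\norme{\w}-n}$, the factorials $(l\norme{\W_k})!/(l\norme{\W_{k-1}})!$ telescope to $(l\norme{\w})!$, the denominators combine to $(l!)^{\norme{\w}}$, and the remaining factors give $\prod_{k=1}^n\p{1+\frac{\bas\omega_{k-1}}{\bas\omega_k}}$, which is the third formula. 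I expect the only delicate part to be the bookkeeping in the inductive step: one must keep track of the first letter of each factor in order to choose the correct count $\binomial{p+q}{p}$ or $\binomial{p+q-1}{p-1}$ and to preserve the invariant that every term begins with $1$; the binomial simplification producing the ratio $(2p+q)/(p+q)$ is the algebraic core from which the factor $1+\norme{\w'}/\norme{\w}$ emerges.
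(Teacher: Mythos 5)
Your proof is correct and shares the paper's overall skeleton: unfold $\p{\mathcal Ra}^{[n]}(\w)=R\p{\p{\mathcal Ra}^{[n-1]}(\w')\sh\gamma^{\omega_n}}$, count the shuffles beginning with the letter $1$ via Lemma \ref{lemma_shuffle_1}, and telescope the resulting recursion. Where you genuinely diverge is in the central count of the terms of $\gamma^{\omega_n}$ according to their first letter. The paper expands $\gamma^{\omega_n}=\p{\eta+\overline\eta}^{\sh\,\omega_n}$ by the binomial theorem, computes for each $k$ the numbers $f(\omega_n,k)$ and $g(\omega_n,k)$ of words of $\eta^{\sh k}\sh\overline\eta^{\sh\p{\omega_n-k}}$ beginning with $1$ and with $2$ respectively, and only then resums over $k$ using $\sum_k k\binom{\omega_n}{k}=\omega_n2^{\omega_n-1}$. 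You instead invoke the involution $\sigma$: since $\sigma(\gamma^{\omega_n})=\gamma^{\omega_n}$ while $\sigma$ exchanges words beginning with $1$ and words beginning with $2$, exactly half of the $2^{\omega_n}\binom{l\omega_n}{l,\hdots,l}$ terms begin with each letter. This single symmetry remark replaces the per-$k$ bookkeeping and the binomial identity, and the two counts agree, since $N=2^{\omega_n-1}(l\omega_n)!/(l!)^{\omega_n}=\sum_k\binom{\omega_n}{k}f(\omega_n,k)$. Your route is shorter and makes the provenance of the factor $2^{\omega_n-1}$ transparent (it is exactly the ``half'' coming from the $\sigma$-symmetry, i.e.\ from Axiom \ref{def_wbRBa_H4}); the paper's route is purely combinatorial and does not appeal to $\sigma$, at the cost of a longer computation. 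From that point on the two arguments coincide: the same two-case split $\binom{p+q}{p}$ versus $\binom{p+q-1}{p-1}$ according to the first letter of the $\gamma^{\omega_n}$-factor, the same simplification producing the factor $1+\norme{\w'}/\norme{\w}$, and the same telescoping (with the convention $\bas\omega_0=0$) for the closed form.
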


\begin{proof}$\,$
\begin{itemize}
\item Using Newton's formula, one has~:
\begin{align*}
\p{\mathcal R(a)}^{[1]}(\omega_1)&=R\p{\gamma^{\omega_1}}\\
&=R\p{ \p{\eta+\overline{\eta}}^{\omega_1}}\\
&=\sum_{k=0}^{\omega_1} \binomial{\omega_1}{k}R\p{
\eta^{\sh k }\sh \overline{\eta}^{\sh \omega_1-k}
}.
\end{align*}
But according with Lemma \ref{lemma_shuffle_1}, the power $\eta^{\sh k}$ contains $\binomial{lk}{l,\hdots,l}=\dfrac{(lk)!}{(l!)^k}$ words all beginning by the letter $1$ and the power $\eta^{\sh \omega_1-k}$ contains $\binomial{l\p{\omega_1-k}}{l,\hdots,l}=\dfrac{(l(\omega_1-k))!}{(l!)^{\omega_1-k}}$ words all beginning by the letter $2$. Then always thanks to Lemma \ref{lemma_shuffle_1}, the product $\eta^{\sh k }\sh \overline{\eta}^{\sh \omega_1-k}$ contains   $\binomial{lk}{l,\hdots,l}\binomial{l\p{\omega_1-k}}{l,\hdots,l}\binomial{l\omega_1-1}{lk-1}$ words beginning by the letter $1$.

So, the number of words beginning by the letter $1$ in  $\p{\mathcal R(a)}^{[1]}(\omega_1)$ is 

\begin{align*}
&\sum_{k=0}^{\omega_1} \binomial{\omega_1}{k}\binomial{lk}{l,\hdots,l}\binomial{l\p{\omega_1-k}}{l,\hdots,l}\binomial{l\omega_1-1}{lk-1}\\
&=\sum_{k=0}^{\omega_1}  
\binomial{\omega_1}{k}
\dfrac{(lk)!}{(l!)^k}
\dfrac{(l(\omega_1-k))!}{(l!)^{\omega_1-k}}
\dfrac{(l\omega_1-1)!}{(lk-1)!(l(\omega_1-k))!}\\
&=\dfrac{ (l\omega_1-1)! }{(l!)^{\omega_1}}
l\sum_{k=0}^{\omega_1 } k\\
&=\dfrac{(l\omega_1-1)! }{(l!)^{\omega_1}} l\omega_1 2^{\omega_1-1}\\
&=2^{\omega_1-1} \dfrac{(l\omega_1)! }{(l!)^{\omega_1}}\\
&=2^{\omega_1-1}\binomial{l\omega_1}{l,\hdots,l}
\end{align*}
 
Let us observe that words in this sum are all of length $l\omega_1$ 
\item Again by Newton's formula~:
\begin{align*}
\p{\mathcal R(a)}^{[n]}(\w)&=R\p{a(\omega_n)\p{\mathcal R(a)^{[n]}(\omega')}}\\
&=R\p{\gamma^{\omega_n}\p{\mathcal R(a)^{[n]}(\omega')}}\\
&=\sum_{k=0}^{\omega_n} \binomial{\omega_n}{k}R\p{\eta^{\sh k}\sh \overline \eta^{\sh \omega_n-k} \sh \p{\mathcal R(a)^{[n]}(\omega')} }.
\end{align*}

The factor $\p{\mathcal R(a)^{[n]}(\omega')}$ contains $K(\w')$ words all beginning by $1$ and of length $l\norme{\omega'}$. 

Let us denote by $f(\omega_n,k)$ (respectively $g(\omega_n,k)$) the number of words beginning by $1$ (respectively $2$) in $\eta^{\sh k}\sh \overline \eta^{\sh \omega_n-k}$. 

Using again Lemma \ref{lemma_shuffle_1}, one has~:
\begin{align*}
f(\omega_n,k)&=
\binomial{kl}{l,\hdots,l}
\binomial{l\p{\omega_n-k}}{l,\hdots,l}
\binomial{l\omega_n-1}{lk-1}\\
&=
\dfrac{(lk)!}{(l!)^k}
\dfrac{(l\p{\omega_n-k})!}{(l!)^{\omega_n-k}}
\dfrac{(l\omega_n-1)!}{(lk-1)! (l(\omega_n-k))!}\\
&=\dfrac{lk (l\omega_n-1)!}{(l!)^{\omega_n}}
\end{align*}
and 
\begin{align*}
g(\omega_n,k)&=
\binomial{kl}{l,\hdots,l}
\binomial{l\p{\omega_n-k}}{l,\hdots,l}
\binomial{l\omega_n-1}{l(\omega_n-k)-1}\\
&=\dfrac{l(\omega_n-k) (l\omega_n-1)!}{(l!)^{\omega_n}}.
\end{align*}

Let us now denote by $K(\w,k)$ the number of words beginning by $1$ in $$\eta^{\sh k}\sh \overline \eta^{\sh \omega_n-k} \sh \p{\mathcal R(a)^{[n]}(\omega')}.$$

Then, always by Lemma \ref{lemma_shuffle_1}, one has~:
\begin{align*}
K(\omega,k)&=
f(\omega_n,k)K(\w')\binomial{l\norme{\w'}+l\omega_n}{l\omega_n}+
g(\omega_n,k)K(\w')\binomial{l\norme{\w'}+l\omega_n-1}{l\omega_n}\\
&=\p{
\dfrac{lk (l\omega_n-1)!}{(l!)^{\omega_n}}
\dfrac{(l\norme{\w})!}{(l\omega_n)!(l\norme{\w'})!}
+
\dfrac{l(\omega_n-k) (l\omega_n-1)!}{(l!)^{\omega_n}}
\dfrac{(l\norme{\w}-1)!}{(l\omega_n)!(l\norme{\w'}-1)!}
}K(\w')\\
&=\p{\dfrac{k}{\omega_n}+\dfrac{(\omega_n-k)}{\omega_n }\dfrac{\norme{\w'}}{\norme{\w}}}\dfrac{(l\norme{\w})!}{(l!)^{\omega_n}(l\norme{\w'})!}K(\w')
\end{align*}
and we then obtain~:
\begin{align*}
K(\w)&={\sum_{k=0}^{\omega_n} \binomial{\omega_n}{k}  K(\w,k)}\\
&=\p{\sum_{k=0}^{\omega_n} \binomial{\omega_n}{k}  \p{\dfrac{k}{\omega_n}+\dfrac{(\omega_n-k)}{\omega_n }\dfrac{\norme{\w'}}{\norme{\w}}}}\dfrac{(l\norme{\w})!}{(l!)^{\omega_n}(l\norme{\w'})!}K(\w')\\
&=
\p{2^{\omega_n-1} + \p{2^{\omega_n}-2^{\omega_n-1}}\dfrac{\norme{\w'}}{\norme{\w}}}
\dfrac{(l\norme{\w})!}{(l!)^{\omega_n}(l\norme{\w'})!}K(\w')\\
&=\p{1+\dfrac{\norme{\w'}}{\norme{\w}}}\dfrac{2^{\omega_n-1}(l\norme{\w})!}{(l!)^{\omega_n}(l\norme{\w'})!}K(\w').
\end{align*}
\item A simple induction leads to 
\begin{align*}
K(\w)&=
\p{1+\dfrac{\bas\omega_{r-1}}{\bas\omega_r}}\hdots
\p{1+\dfrac{\bas\omega_{1}}{\bas\omega_2}}
\dfrac{2^{\hat \omega_2-(n-1)}(l\norme{\w})!}{(l!)^{\hat \omega_2}(l\norme{(\omega_1)})!}K(\omega_1)\\
&=\p{1+\dfrac{\bas\omega_{r-1}}{\bas\omega_r}}\hdots
\p{1+\dfrac{\bas\omega_{1}}{\bas\omega_2}}
\dfrac{2^{\norme{\w}-(n)}(l\norme{\w})!}{(l!)^{\norme{\w}}}
\end{align*}
\end{itemize}
\end{proof}

As a consequence of main Theorem and of what precedes, one has, setting $l=2$~:

\begin{theorem}
For any word $\w=\p{\omega_1,\hdots,\omega_n}\in\Hsh$, the map $\phi=(-1)^n\Theta\circ \p{\mathcal R(a)^{[n]}}$ is a character solution of the algebraic problem of averages. Moreover, for any $\w\in\Hcsh$,  one has 
$$\phi(\w)=(-1)^n\Theta\p{\mathcal R(a)^{[n]}}(\w)= \prod_{i=1}^n \p{\dfrac{\omega_i}{2\bas\omega_i}-1}.$$
\end{theorem}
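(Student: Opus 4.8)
The plan is to split the statement into its two assertions. For the first — that $\phi=(-1)^n\Theta\circ\p{\mathcal R(a)^{\cro{n}}}$ is a character solving the algebraic problem of averages — I would invoke the main Theorem \ref{theo_wba}. Its structural hypotheses are already in place, since the preceding proposition establishes that $\p{\Hsh,R,\theta,\sigma,\Theta,\gamma}$ is an average algebra. The only extra condition left to verify is
$$\overline{\Theta\p{\p{\tilde{\mathcal R}a}^{\cro{n}}}}=\Theta\circ\theta\p{\p{\mathcal R a}^{\cro{n}}}\quad\text{for all }n\in\N.$$
Exactly as in the organic case, I would deduce this from the single observation that $\overline{\Theta}=\Theta\circ\theta$ holds on the whole of $\Hsh$: indeed $\Theta$ takes only real values, so conjugation is trivial on its image, and $\theta=\sigma$ preserves the length of words, whence $\Theta\circ\sigma=\Theta$. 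Combined with Lemma \ref{lemma_2}, which gives $\theta\circ\p{\tilde{\mathcal R}a}^{\cro{n}}=\p{\mathcal R a}^{\cro{n}}$, this yields the required equality, and the first assertion follows.

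The second assertion is the explicit evaluation, and here the substantive combinatorics has already been carried out in the preceding lemma (the one computing $K(\w)$). The point I would exploit is its length statement: every word occurring in $\p{\mathcal R(a)}^{\cro{n}}(\w)$ has length exactly $l\norme{\w}$, and there are $K(\w)$ of them. Since $\Theta$ sends a word of length $L$ to $1/L!$, this gives immediately
$$\Theta\p{\p{\mathcal R a}^{\cro{n}}(\w)}=\frac{K(\w)}{\p{l\norme{\w}}!}.$$
Substituting the closed form for $K(\w)$ supplied by the lemma and specialising to $l=2$, so that $(l!)^{\norme{\w}}=2^{\norme{\w}}$ cancels the $(l\norme{\w})!$ together with the matching power of $2$ in the numerator, I would reduce this to
$$\Theta\p{\p{\mathcal R a}^{\cro{n}}(\w)}=2^{-n}\prod_{k=1}^{n}\p{1+\frac{\bas\omega_{k-1}}{\bas\omega_k}},$$
where $\bas\omega_k=\omega_1+\cdots+\omega_k$ and $\bas\omega_0=0$.

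It then remains only to reconcile this with the advertised product. Multiplying by $(-1)^n$ and distributing the factors of $-\tfrac12$, I would use the elementary identity $\omega_k=\bas\omega_k-\bas\omega_{k-1}$ to rewrite each factor as
$$-\frac12\p{1+\frac{\bas\omega_{k-1}}{\bas\omega_k}}=\frac{\omega_k-2\bas\omega_k}{2\bas\omega_k}=\frac{\omega_k}{2\bas\omega_k}-1,$$
so that $\phi(\w)=\prod_{k=1}^{n}\p{\frac{\omega_k}{2\bas\omega_k}-1}$, as claimed. I do not anticipate a genuine obstacle in the theorem proper: the recursion and closed form for $K(\w)$, together with the constant length $l\norme{\w}$ of all words produced, are precisely what the preceding lemma provides, and what remains is the routine passage from a word count to a value of the exponential character $\Theta$ followed by an algebraic simplification. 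The one step deserving care is the bookkeeping guaranteeing that every word in $\p{\mathcal R(a)}^{\cro{n}}(\w)$ shares the common length $l\norme{\w}$, since it is this uniformity alone that licenses writing $\Theta$ as a count divided by a single factorial.
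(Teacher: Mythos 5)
Your proposal is correct and follows essentially the same route as the paper: the first assertion is reduced, exactly as in the organic case, to the identity $\overline{\Theta}=\Theta\circ\theta$ (valid since $\Theta$ is real-valued and depends only on word length, which $\sigma$ preserves), and the second assertion is obtained by dividing the count $K(\w)$ from the preceding lemma by $\p{l\norme{\w}}!$, specialising to $l=2$, and performing the same algebraic rewriting of $-\tfrac12\p{1+\bas\omega_{k-1}/\bas\omega_k}$ as $\tfrac{\omega_k}{2\bas\omega_k}-1$. If anything, your write-up is more careful than the paper's (which contains a couple of evident typographical slips in the displayed computation), and you rightly flag the uniform length $l\norme{\w}$ of all words in $\p{\mathcal R a}^{\cro{n}}(\w)$ as the fact that licenses evaluating $\Theta$ as a count over a single factorial.
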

\begin{proof}$\,$

\begin{itemize}
\item The first point comes directly from the fact that $\Theta$ takes its values in $\R$ and that it just depends on words length, which is preserved by $\Theta$. 
\item A direct consequence of the previous proposition is that~:
\begin{align*}
\phi(\w)
&=\dfrac{(-1)^n}{2^n}\Theta\p{\mathcal R(a)^{[n]}}(\w)\\
&=\dfrac{(-1)^{2n}}{2^n} \prod_{i=1}^n\p{1+\dfrac{\bas\omega_{i-1}}{\bas \omega_i}}\\
&=\prod_{i=1}^n (-1)^n\p{\dfrac{\omega_i}{2\bas\omega_i}-1}
\end{align*}
\end{itemize}

\end{proof}

\begin{theorem}
 For $v=\id_A$, the seven-uple $\p{A,R,\theta,\sigma,\Theta,\gamma,v}$ is a well behaved Rota-Baxter algebra and so $\phi$ solves both the algebraic and analytic problem of averages.
\end{theorem}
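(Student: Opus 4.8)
The plan is to apply Theorem~\ref{theo_wbaa}. The pair $\p{A,R,\theta,\sigma,\Theta,\gamma}$ has just been shown to be an average algebra, so it only remains to check that the choice $v=\id_A$ meets the two growth axioms of Definition~\ref{def_wbaRB}. Invoking Remark~\ref{remark_restriction_axioms_wba}, I would verify them not for arbitrary $b\in\im R$ but only for the elements that actually occur in the recursion, namely $b=\p{\mathscr R a}^{\ens{n}}\p{\w^<}$ with $n\in\N$ and $\w^<\in\mots$. Since $v=\id_A$, Axiom~\ref{axiom_wba1} degenerates to $\abs{\Theta\p{b}}\le\abs{\Theta\p{b}}$ and is automatic, so the entire argument reduces to Axiom~\ref{axiom_wba2}.

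I would first isolate two facts. On letters $a\p{\omega}=-\gamma^{\sh\omega}$, where $\gamma=\eta+\overline\eta$ has nonnegative coefficients and $\eta,\overline\eta$ have length $l=2$; an immediate induction on $n$ then shows that every coefficient of $b=\p{\mathscr R a}^{\ens{n}}\p{\w^<}$ carries the same sign $\p{-1}^n$, because each factor $a$ contributes a global $-1$ and the projector $R$ only deletes words, never producing cancellation. In particular $b$ is a single-sign combination (of words beginning with the letter $1$ when $n\ge 1$), so $\abs{\Theta\p{b}}$ is just the sum, over its words counted with multiplicity, of $1/(\text{length})!$. The second fact is that $\Theta$ is the exponential, hence multiplicative, character and that the normalization $l=2$ forces $\Theta\p{\gamma}=\Theta\p{\eta}+\Theta\p{\overline\eta}=\tfrac{1}{2!}+\tfrac{1}{2!}=1$, whence $\abs{\Theta\p{\gamma^{\sh m}}}=\abs{\Theta\p{\gamma}}^m=1$ for all $m$.

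These two facts give Axiom~\ref{axiom_wba2} with $k=1$. For $b$ as above and $m\in\N$, the element $\gamma^m b=\gamma^{\sh m}\sh b$ is again a single-sign combination, and $R\p{\gamma^m b}$ retains exactly those of its words that begin with $1$; deleting words cannot increase $\sum 1/(\text{length})!$ of a single-sign element, so
\begin{align*}
\abs{\Theta\circ v\p{R\p{\gamma^m b}}}=\abs{\Theta\p{R\p{\gamma^m b}}}\le\abs{\Theta\p{\gamma^{\sh m}\sh b}}=\abs{\Theta\p{\gamma^{\sh m}}}\,\abs{\Theta\p{b}}=\abs{\Theta\circ v\p{b}}.
\end{align*}
Thus $\p{A,R,\theta,\sigma,\Theta,\gamma,v}$ is a well-behaved Rota-Baxter algebra, and Theorem~\ref{theo_wbaa} concludes that $\phi$ solves both the algebraic and the analytic problem of averages.

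I expect the only subtle point to be the no-cancellation bookkeeping: the inequality \emph{$R$ cannot increase} $\abs{\Theta}$ is false for a general $b\in\im R$, where mixed signs could make $\abs{\Theta\p{b}}$ spuriously small, and it is exactly the single-sign structure of the recursion outputs $\p{\mathscr R a}^{\ens{n}}\p{\w^<}$, legitimized by Remark~\ref{remark_restriction_axioms_wba}, that makes it valid. The one piece of genuine arithmetic is the identity $\Theta\p{\gamma}=1$, which rests on the normalization $l=2$ and is what allows the constant $k=1$.
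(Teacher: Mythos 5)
Your proof is correct and takes essentially the same route as the paper: Axiom \ref{axiom_wba1} is immediate for $v=\id_A$, and Axiom \ref{axiom_wba2} holds with $k=1$ because $\Theta\p{\gamma}=1$ (forced by $l=2$) and $R$ only deletes words. Where the paper computes the exact word count of $\gamma^m b$ via multinomial coefficients (for $b$ a positive sum of words of one fixed length), you reach the same bound through the multiplicativity of $\Theta$ combined with the explicit single-sign/no-cancellation observation legitimized by Remark \ref{remark_restriction_axioms_wba} --- a slightly more careful packaging of the same argument, which in addition covers combinations of words of unequal lengths.
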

\begin{proof}

 \begin{enumerate}
\item By definition of $v$, Axiom \ref{axiom_wba1} of definition \ref{def_wbaRB} is trivially fulfilled.
\item Let us consider $b\in \im R$. It is a sum of words all beginning by the letter $1$ and all of the same length that we denote $l'$. We denote by $n$ the number of words in this sum. For $m\in\N$, $\gamma^m$ it the sum of $\binom{ml}{l,\hdots,l}$ words of length $ml$. Then $\gamma^m b$ is the sum of $\binom{ml+l'}{ml}\binom{ml}{l,\hdots,l}$ words of length $l\p{\norme{\w}+m}$ and 

\begin{align*}
 \abs{\Theta\circ v\circ{ R\p{\gamma^m b }}}&=n\binom{ml+l'}{ml}\binom{ml}{l,\hdots,l}\dfrac{1}{l\p{\norme{\w}+m}!}=\dfrac{1}{(l!)^m}\dfrac{n}{l'!}=\dfrac{1}{(l!)^m}\abs{\Theta(b)}\leq k\abs{\Theta(b)}
\end{align*}

with $k=1$ and Axiom \ref{axiom_wba2} is satisfied as well. 
\end{enumerate}
\end{proof}
\section{Outlook}

We have exposed in these pages how to recover important moulds (which were already discovered by Ecalle) by using classical algebraic tools. Our method is based on the factorization of a very simple character $\iota$ of the quasi-shuffle algebra using Atkinson's recursion but in a constrained way. 

We have explained that performing the same factorization but on the Connes-Kreimer algebra permits to compute the contracted arborification of the  moulds by using the Hopf algebra formalism.

The following step would be naturally to obtain new moulds solving the problem of averages, which remains to find new well behaved Rota-Baxter algebras. As already explained, it is a very difficult problem and the classical Rota-Baxter algebras fail to be well behaved ones. One of the next challenges consists then to find such algebras.

Ecalle's original presentation of averages theory is based on the notion of family of complex weights he assigns to analytic continuations of germs of analytic functions. Such a family of weights must particulary  satisfy some so called \textit{autocoherence relations} (\cite{Ec1}, \cite{Vi1}).  We will explain in a forthcoming paper that it is possible to obtain a very simple algebraic interpretation for the autocoherence relations,  and also to recover another crucial relation present in \cite{MeNoTh} by using Atkinson's recursion on the Hopf algebra of  \textit{packed words} $\WQSym$. 


\frenchspacing
\begin{otherlanguage}{english}

\end{otherlanguage}

\end{document}